\newtheorem{thm}{Theorem}[section]
\newtheorem{cor}[thm]{Corollary}
\newtheorem{lem}[thm]{Lemma}
\newtheorem{prop}[thm]{Proposition}
\newtheorem{thmintro}{Theorem}
\theoremstyle{definition}
\newtheorem{defn}[thm]{Definition}
\newtheorem{ex}[thm]{Example}
\newcommand{\N}{\mathbb N}
\newcommand{\Z}{\mathbb Z}
\newcommand{\Q}{\mathbb Q}
\newcommand{\R}{\mathbb R}
\newcommand{\C}{\mathbb C}
\newcommand{\mf}{\mathfrak}
\newcommand{\mc}{\mathcal}
\newcommand{\mb}{\mathbf}
\newcommand{\mh}{\mathbb}
\def\Irr{{\rm Irr}}
\newcommand{\mr}{\mathrm}
\newcommand{\enuma}[1]{\begin{enumerate}[\textup{(}a\textup{)}] {#1} \end{enumerate}}
\newcommand{\nr}{\mathrm{nr}}
\newcommand{\unr}{\mathrm{unr}}
\newcommand{\Rep}{\mathrm{Rep}}
\newcommand{\End}{\mathrm{End}}
\begin{document}

\title[Hochschild homology of twisted crossed product algebras]{
Hochschild homology of twisted crossed \\
products and twisted graded Hecke algebras}
%\author{Maarten Solleveld}
%\address{IMAPP, Radboud Universiteit Nijmegen, Heyendaalseweg 135, 
%6525AJ Nijmegen, the Netherlands}
%\email{m.solleveld@science.ru.nl}
\date{\today}
%\thanks{The author is supported by a NWO Vidi grant "A Hecke algebra approach to the 
%local Langlands correspondence" (nr. 639.032.528).}
\subjclass[2010]{16E40, 16S35, 20C08}
\maketitle
\begin{center}
{\Large Maarten Solleveld} \\[1mm]
IMAPP, Radboud Universiteit Nijmegen\\
Heyendaalseweg 135, 6525AJ Nijmegen, the Netherlands \\
email: m.solleveld@science.ru.nl 
\end{center}
\vspace{4mm}

\begin{abstract}
Let $A$ be a $\C$-algebra with an action of a finite group $G$, let $\natural$ be a 2-cocycle 
on $G$ and consider the twisted crossed product $A \rtimes \C [G,\natural]$. We determine the 
Hochschild homology of $A \rtimes \C [G,\natural]$ for two classes of algebras $A$:
\begin{itemize}
\item rings of regular functions on nonsingular affine varieties,
\item graded Hecke algebras.
\end{itemize}
The results are achieved via algebraic families of (virtual) representations and include
a description of the Hochschild homology as module over the centre of 
$A \rtimes \C [G,\natural]$. This paper prepares for a computation of the Hochschild 
homology of the Hecke algebra of a reductive $p$-adic group.
\end{abstract}
\vspace{2mm}

\tableofcontents

\section*{Introduction}

Consider a finite group $G$ and a 2-cocycle $\natural : G \times G \to \C^\times$.
The twisted group algebra $\C [G,\natural]$ is the vector space with basis
$\{T_g : g \in G \}$ and multiplication
\[
T_g \cdot T_{g'} = \natural (g,g') T_{g g'} \qquad g,g' \in G.
\]
Such algebras arise for instance from a projective representation $\pi : G \to PGL_n (\C)$.
Even if $\pi$ cannot be linearized, one can always regard $\pi$ as a representation of 
a suitable twisted group algebra of $G$. The general aim of this paper is to make certain 
results for algebras involving $\C [G]$ available for similar algebras that involve 
$\C [G,\natural]$. In other words, we want to treat $\C[G,\natural]$ on the same footing as
the group algebra $\C [G]$. Although $\C [G,\natural]$ is always semisimple, this is not so
trivial, already because the dimensions of irreducible $\C [G,\natural]$-representations
depend on the image of $\natural$ in $H^2 (G,\C^\times)$. 

Let $A$ be a unital $\C$-algebra on which $G$ acts by algebra automorphisms. The twisted 
crossed product algebra $A \rtimes \C [G,\natural]$ is the vector space 
$A \otimes_\C \C [G,\natural]$ with multiplication rules
\begin{itemize}
\item $A$ and $\C [G,\natural]$ are embedded as subalgebras,
\item $T_g a T_g^{-1} = g (a)$ for $g \in G$ and $a \in A$.\\
\end{itemize}

\textbf{Twisted crossed products with rings of regular functions.}\\
Interesting examples of the above algebras arise when $V$ is a complex affine variety endowed
with a $G$-action and $A = \mc O (V)$, the ring of regular functions on $V$. Our motivation
to study algebras like $\mc O (V) \rtimes \C [G,\natural]$ stems from reductive $p$-adic groups. 
There twisted versions of Hecke algebras appear in several ways, see e.g. \cite{AMS3,Mor,SolEnd}. 
If one manually sets the $q$-parameters of such Hecke algebras equal to 1, one obtains an algebra 
of the form $\mc O (V) \rtimes \C [G,\natural]$. 

It is well-known that the irreducible representations of $\mc O (V) \rtimes G = 
\mc O (V) \rtimes \C [G]$ are naturally parametrized by
\begin{equation}\label{eq:2}
(V /\!/ G)_2 = \{ (v,\pi_v) : v \in V, \pi_v \in \Irr (G_v) \} / G ,
\end{equation}
where $g \cdot (v,\pi_v) = (gv, \pi_v \circ \mr{Ad}(g)^{-1})$. The parametrization map is simple:
\[
(v,\pi_v) \mapsto \mr{ind}_{\mc O (V) \rtimes G_v}^{\mc O (V) \rtimes G} (\C_v \otimes \pi_v) .
\]
Many invariants of $\mc O (V) \rtimes G$ are related to the space
\begin{equation}\label{eq:1}
(V /\!/ G)_1 = \{ (v,g) : v \in V, g \in G_v \} / G ,
\end{equation}
where $g \cdot (v,g') = (gv, g g' g^{-1})$. Indeed, for nonsingular $V$ the Hochschild homology
was computed by Brylinski and Nistor \cite{Bry,Nis}:
\begin{equation}\label{eq:3}
HH_n (\mc O (V) \rtimes G) = \big( \bigoplus\nolimits_{g \in G} \Omega^n (V) \big)^G =
\Omega^n \big( \{ (v,g) : v \in V, g \in G_v \} \big)^G . 
\end{equation}
Now we discuss our analogues with twisting by $\natural$. The same arguments as for \eqref{eq:2}
show that $\Irr (\mc O (V) \rtimes \C [G,\natural])$ is naturally parametrized by
\begin{equation}\label{eq:4}
(V /\!/ G)_\natural = \{ (v,\pi_v) : v \in V, \pi_v \in \Irr (\C [G_v,\natural]) \} / G ,
\end{equation}
where $g \cdot (v,\pi_v) = (gv, \pi_v \circ \mr{Ad}(T_g)^{-1}$. However, there is no direct
generalization of \eqref{eq:1}. To get around that, we define (for $g \in G$)
\[
\begin{array}{cccc}
\natural^g : & G & \to & \C^\times \\
& h & \mapsto & T_h T_g T_h^{-1} T_{hgh^{-1}}^{-1} 
\end{array} .
\]
Then $\natural^g \big|_{Z_G (g)}$ is character, and the $\natural^g$ measure how far away from
a group algebra $\C [G,\natural]$ is. Indeed, we check in Lemma \ref{lem:2.3} that
\[
\Irr (\C [G,\natural]) \quad \text{and} \quad 
\{ g \in G : \natural^g \big|_{Z_G (g)} = 1 \} / G\text{-conjugation}   
\]
have the same cardinality. This generalizes the well-known equality between the number of
conjugacy classes and the number of inequivalent irreducible representations of $G$.

Notice that $\mc O (V)^G$ is contained in the centre of $\mc O (V) \rtimes \C [G,\natural]$,
so that it acts naturally on the Hochschild homology of that algebra.

\begin{thmintro}\label{thm:A}
\textup{(see Theorem \ref{thm:2.1} and \eqref{eq:2.33})} \\
Let $V$ be a nonsingular complex affine variety with a $G$-action. There exists an isomorphism
of $\mc O (V)^G$-modules
\[
HH_n \big( \mc O (V) \rtimes \C [G,\natural] \big) \cong
\big( \bigoplus\nolimits_{g \in G} \Omega^n (V^g) \otimes \natural^g \big)^G . 
\]
\end{thmintro}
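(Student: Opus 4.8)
The plan is to reduce the computation to the untwisted case \eqref{eq:3} by a Morita-equivalence / centre-of-group-algebra trick, combined with the decomposition of Hochschild homology over conjugacy classes. First I would recall that $\C[G,\natural]$ is semisimple, so by a standard argument (see e.g.\ the treatment of $\mc O (V) \rtimes G$ via the family of induced representations $(v,\pi_v)$) the algebra $\mc O (V) \rtimes \C[G,\natural]$ has finite global dimension whenever $V$ is nonsingular, and its Hochschild homology can be computed from a projective bimodule resolution. The key structural input is that $HH_*$ of a twisted crossed product $A \rtimes \C[G,\natural]$ decomposes, just as in the untwisted case, as a direct sum indexed by $G$-orbits of elements $g \in G$; the contribution of the orbit of $g$ involves the fixed-point data of $g$ acting on $A$, twisted by the character $\natural^g$. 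So the first real step is to establish the $G$-graded decomposition
\[
HH_n \big( A \rtimes \C[G,\natural] \big) \;\cong\; \Big( \bigoplus_{g \in G} HH_n \big( A, A \rtimes \C[G,\natural] \cdot T_g \big) \Big)^{G},
\]
where $A \rtimes \C[G,\natural]\cdot T_g$ denotes the $A$-bimodule obtained by twisting the $A$-action on $A$ by $g$ on one side, tensored with the line $\C T_g$. This is pure bookkeeping: one uses that $A \rtimes \C[G,\natural]$ is free as a left and right $A$-module on $\{T_g\}$, applies a Künneth/relative-bimodule resolution, and tracks the cocycle factors $\natural(h,g)$, $\natural(g,h)$ that appear when conjugating $T_h$ past $T_g$, which is exactly what $\natural^g$ records.

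The second step is to identify each summand. Fix $g$. The bimodule $A$ with right action twisted by $g$ has Hochschild homology $HH_n(A, {}_1 A_g)$, and for $A = \mc O (V)$ with $V$ nonsingular one computes, exactly as in Brylinski--Nistor, that $HH_n(\mc O(V), {}_1\mc O(V)_g) \cong \Omega^n(V^g)$ — the Hochschild homology "localizes" onto the fixed locus $V^g$ because off $V^g$ the twisted bimodule is, up to completion, a module over a regular sequence with nontrivial action, hence acyclic in the relevant degrees. (Here one wants $V^g$ itself to be nonsingular; this is automatic since $g$ has finite order and acts linearly in suitable local coordinates, so $V^g$ is a smooth subvariety.) Incorporating the line $\C T_g$ contributes the character $\natural^g$ of $Z_G(g)$ — note $Z_G(g)$ is exactly the stabilizer of $g$ under $G$-conjugation, and it is $Z_G(g)$ that acts on $\Omega^n(V^g)$ after taking the orbit sum. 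Assembling, the $G$-orbit of $g$ contributes $\big(\Omega^n(V^g) \otimes \natural^g\big)^{Z_G(g)}$, and summing over orbits and re-expanding gives $\big(\bigoplus_{g\in G}\Omega^n(V^g)\otimes\natural^g\big)^G$, which is the claim. Throughout, the $\mc O(V)^G$-module structure is visible because $\mc O(V)^G$ sits in the centre and acts diagonally on each $\Omega^n(V^g)$ compatibly with restriction to $V^g$, so the isomorphism is automatically $\mc O(V)^G$-linear.

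The main obstacle I expect is step one: setting up the twisted bimodule resolution so that the cocycle $\natural$ is correctly threaded through, and checking that the resulting $G$-action on $\bigoplus_g HH_n(\mc O(V), {}_1\mc O(V)_g)$ is precisely conjugation on the index $g$ combined with the pullback action on $\Omega^n(V^g)$ \emph{twisted by} $\natural^g$ — i.e.\ verifying that the natural map $T_h \otimes (-) \otimes T_h^{-1}$ on the $g$-component lands in the $hgh^{-1}$-component with the scalar $\natural^h(g)$ (in the notation of the excerpt, $\natural^g(h)$ after reindexing) rather than with no scalar. Once the cocycle bookkeeping is pinned down, the geometric input (localization onto $V^g$) is essentially Brylinski--Nistor verbatim, and taking $G$-invariants commutes with everything in sight because $G$ is finite and we are over $\C$. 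A secondary technical point is handling the passage from the completed local picture used in the localization argument back to global regular differential forms, which is standard for smooth affine varieties via the HKR theorem and a partition-of-unity / faithful flatness argument.
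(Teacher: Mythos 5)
Your proposal is correct, but it follows a different route from the paper's proof of Theorem \ref{thm:2.1}. The paper does not decompose the Hochschild complex of the twisted algebra directly; instead it invokes the theory of Schur multipliers to produce a finite central extension $1 \to Z \to \tilde G \to G \to 1$ with $\C[G,\natural] \cong p_\natural \C[\tilde G]$ for a minimal central idempotent $p_\natural \in \C[Z]$, so that $\mc O (V) \rtimes \C[G,\natural]$ becomes a direct summand of the ordinary crossed product $\mc O (V) \rtimes \tilde G$. It then quotes the Brylinski--Nistor computation \eqref{eq:2.2} verbatim for $\tilde G$ and works out how cutting by $p_\natural$ interacts with the conjugacy-class decomposition; the conditions \eqref{eq:2.9}--\eqref{eq:2.10} that survive the idempotent are exactly the $\chi_\natural$-equivariance in $Z$ and the twisted $Z_G(g)$-invariance $\omega_z = \natural^g(h)\, h(\omega_z)$, which yields $(\Omega^n(V^g)\otimes\natural^g)^{Z_G(g)}$. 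What the central-extension route buys is that the quasi-isomorphism onto the sum of subcomplexes indexed by conjugacy classes never has to be re-proved for twisted coefficients. Your direct route --- decomposing $C_*(\mc O(V)\rtimes\C[G,\natural])$ into the subcomplexes $T_g C_*(\mc O(V))$ using separability of $\C[G,\natural]$, localizing each onto $V^g$ \`a la Brylinski--Nistor, and tracking the scalar $\natural^g(h)$ that appears when $T_h$ conjugates the $g$-component into the $hgh^{-1}$-component --- is essentially the ``more explicit construction'' the paper records afterwards in \eqref{eq:2.23}--\eqref{eq:2.34} to obtain the reformulation \eqref{eq:2.33} and to make the isomorphism canonical up to the choice of the $T_g$; it requires you to verify the twisted quasi-isomorphism yourself, but in exchange avoids the non-canonical choices (of $\tilde G$, of lifts of conjugacy-class representatives, and of the isomorphism \eqref{eq:2.1}) that the paper itself flags as a drawback of its proof. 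Your cocycle bookkeeping is the right one: the scalar is $\natural^g(h) = T_h T_g T_h^{-1} T_{hgh^{-1}}^{-1}$, matching \eqref{eq:2.34}, and its restriction to $Z_G(g)$ is the character tensoring $\Omega^n(V^g)$.
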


We can interpret \eqref{eq:3} as $HH_0 (\mc O (V) \rtimes G) = \mc O ( (V // G)_1 )$.
In contrast, it is not clear whether $HH_0 (\mc O (V) \rtimes \C [G,\natural] )$ is naturally
isomorphic to the coordinate ring of a complex affine variety. It is preferable to describe
this in noncommutative geometric terms. Then $\mc O (V) \rtimes \C [G,\natural]$ is the ring
``functions on the space $(V /\!/ G)_\natural$" and Theorem \ref{thm:A} describes the
``differential forms on $(V /\!/ G)_\natural$".\\[2mm]

\textbf{Twisted crossed products with graded Hecke algebras.}\\
Another class of algebras that we want to investigate is intrinsically non-commutative.
Let $\mh H (\mf t,W,k)$ be a graded Hecke algebra, where $W$ is a Weyl group acting
on a complex vector space $\mf t$ and $k$ is a real-valued parameter function. Let $\Gamma$
be a finite group acting on $\mh H (\mf t,W,k)$, such that all structure used to define 
$\mh H (\mf t,W,k)$ is preserved by the action. Given a 2-cocycle $\natural$ of $\Gamma$,
we build the twisted graded Hecke algebra
\begin{equation}\label{eq:5}
\mh H = \mh H (\mf t,W,k) \rtimes \C[\Gamma,\natural].
\end{equation}
For $k=0$, this algebra is just $\mc O (\mf t) \rtimes \C [W\rtimes \Gamma,\natural]$,
where we inflate $\natural$ to a 2-cocycle of $W \rtimes \Gamma$.
Algebras of the form \eqref{eq:5}, sometimes with a nontrivial $\natural$, play an important 
role in the study of parabolically induced representations of reductive $p$-adic groups
\cite{SolEnd}. That motivated us to determine their Hochschild homology.

It follows quickly from \cite{SolHomGHA} and Theorem \ref{thm:A} that as vector spaces
\begin{equation}\label{eq:6}
HH_n ( \mh H ) \cong \big( \bigoplus\nolimits_{w \in W\Gamma} 
\Omega^n (\mf t^w) \otimes \natural^w \big)^{W\Gamma} ,
\end{equation}
see \eqref{eq:4.6}. The nontrivial content of this statement is that for every element on 
the right hand side, a particular representative in a differential complex computing 
$HH_n (\mh H)$ is exhibited. However, usually \eqref{eq:6} is not an isomorphism of 
$Z(\mh H)$-modules, or even of modules over the central subalgebra $\mc O (\mf t)^{W\Gamma}$. 
To work well with $HH_n (\mh H)$, we need to understand the isomorphism \eqref{eq:6} 
better and to realize it with maps induced by algebra homomorphisms.

In \cite{SolHomAHA} this is achieved (without twisting by $\natural$) with families of
representations. For every $w \in W\Gamma$ a family $\mf F_w$ of $\mh H (\mf t,W,k) \rtimes 
\Gamma$-representations parametrized by $\mf t^w$ is chosen, such that:
\begin{enumerate}[(i)]
\item in the Grothendieck group $R( \mh H (\mf t,W,k) \rtimes \Gamma)$ of finite dimensional
$\mh H (\mf t,W,k) \rtimes \Gamma$-representations, the span of $\mf F_w$ is linearly 
independent from the span of the union of the $\mf F_{w'}$ with $w'$ not conjugate to $w$ 
in $W\Gamma$, 
\item the union of all the $\mf F_w$ spans 
$\Q \otimes_\Z R (\mh H (\mf t,W,k) \times \Gamma)$.
\end{enumerate}
Each $\mf F_w$ induces an algebra homomorphism
\begin{equation}\label{eq:7}
\mc F_w : \mh H (\mf t,W,k) \rtimes \Gamma \to \mc O (\mf t^w) \otimes \End_\C (V_w) ,
\end{equation}
where $V_w$ is the vector space underlying all representations in $\mf F_w$. Recall that
by the Hochschild--Kostant--Rosenberg theorem
\[
HH_n \big( \mc O (\mf t^w) \otimes \End_\C (V_w) \big) \cong \Omega^n (\mf t^w). 
\]
It is shown in \cite{SolHomAHA} that the maps $HH_n (\mc F_w)$ together 
induce an isomorphism of $Z(\mh H (\mf t,W,k) \rtimes \Gamma)$-modules
\[
HH_n ( \mh H (\mf t,W,k) \rtimes \Gamma ) \cong \big( 
\bigoplus\nolimits_{w \in W\Gamma} \Omega^n (\mf t^w) \big)^{W\Gamma} .
\]
Unfortunately there is a problem with \cite{SolHomAHA}: the construction of the
families $\mf F_w$ does not work in general. Namely, on \cite[p. 18 and 20]{SolHomAHA} it is 
reduced to algebras of the form $\mc O (\mf t) \rtimes G$, but in that setting the families
of representations provided by \cite[(40)]{SolHomAHA} do not always satisfy (i) and (ii).
Nevertheless \cite{SolHomAHA} remains valid in most cases, because:
\begin{itemize}
\item When the parameters $k$ are of ``geometric type", one can use families of representations
from \cite{Lus1,Lus2,AMS2}.
\item For most real-valued $k$ one can use the technique with parameter deformations from
\cite[proof of Lemma 6.4]{SolHecke}, to reduce to the previous case.
\item We are not aware of any examples of graded Hecke algebras for which it is clear that they 
do not possess families of representations $\mf F_w$ with the properties (i) and (ii).
\end{itemize}
For the twisted graded Hecke algebra $\mh H = \mh H (\mf t,W,k) \rtimes \C [\Gamma,\natural]$,
the situation is less favorable: it may be impossible to find families of representations with 
the above properties. A counterexample is provided by Example \ref{ex:2.B}, which shows that for 
$\mc O (\mf t) \rtimes \C [W\Gamma,\natural]$ property (i) is problematic for representations 
with central character $W\Gamma v \in \mf t / W\Gamma$ such that $\natural$ is nontrivial in 
$H^2 ( (W\Gamma)_v, \C^\times)$.

To overcome that, we consider not only (algebraic) families of representations, but also 
families of virtual representations, in $\C \otimes_\Z R (\mh H)$.
In Lemma \ref{lem:2.12} we check that every such family canonically induces a map on Hochschild
homology, a linear combination of maps induced by algebra homomorphisms. For each $w \in W\Gamma$
we construct an algebraic family of virtual $\mh H$-representations
$\nu^1_w = \{ \nu^1_{w,v} : v \in \mf t^w \}$, which satisfies (i) and (ii).

\begin{thmintro}\label{thm:B} 
\textup{(see Theorem \ref{thm:4.4})} 
\enuma{
\item The families of virtual representations $\nu^1_w$ induce an isomorphism of vector spaces
\[
HH_n ( \mh H ) \cong \big( \bigoplus\nolimits_{w \in W\Gamma} 
\Omega^n (\mf t^w) \otimes \natural^w \big)^{W\Gamma} .
\]
\item $HH_0 ( \mh H )$ is naturally isomorphic to the set of $f$ in 
$\big( \C \otimes_\Z R (\mh H) \big)^*$ with the property: for any algebraic family
$\mf F : \lambda \mapsto \mf F_\lambda$ of $\mh H$-representations, 
$\lambda \mapsto f (\mf F_\lambda)$ is a regular function.
}
\end{thmintro}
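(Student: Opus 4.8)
The plan is to realize the isomorphism of part (a) explicitly through the virtual families $\nu^1_w$, and then to deduce part (b) from that realization. For part (a), recall that by Lemma \ref{lem:2.12} each algebraic family of virtual representations $\nu^1_w$ induces a $\C$-linear map $HH_n(\nu^1_w) : HH_n(\mh H) \to \Omega^n(\mf t^w)$, where $HH_n$ of each constituent target $\mc O(\mf t^w) \otimes \End_\C(V)$ is identified with $\Omega^n(\mf t^w)$ via Hochschild--Kostant--Rosenberg. The group $W\Gamma$ permutes the $\nu^1_w$ compatibly with conjugating $w$ and translating the parameter in $\mf t^w$, while for $z \in Z_{W\Gamma}(w)$ the induced self-equivalence of $\nu^1_w$ differs from the naive one by the character $\natural^w \big|_{Z_{W\Gamma}(w)}$. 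Hence the maps $HH_n(\nu^1_w)$ assemble into one $\mc O(\mf t)^{W\Gamma}$-linear map
\[
\Phi : HH_n(\mh H) \longrightarrow \Big( \bigoplus\nolimits_{w \in W\Gamma} \Omega^n(\mf t^w) \otimes \natural^w \Big)^{W\Gamma} ,
\]
and the assertion of (a) is that $\Phi$ is bijective.

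To prove bijectivity I would first treat $k = 0$, where $\mh H = \mc O(\mf t) \rtimes \C[W\Gamma,\natural]$: here Theorem \ref{thm:A} supplies an abstract isomorphism, and one checks directly that $\Phi$ agrees with it, the families $\nu^1_w$ being those already available in that setting. For general $k$ I would use the comparison isomorphism $HH_n\big(\mh H(\mf t,W,k)\rtimes\C[\Gamma,\natural]\big) \cong HH_n\big(\mc O(\mf t)\rtimes\C[W\Gamma,\natural]\big)$ underlying \eqref{eq:6}, which comes from \cite{SolHomGHA} (its proof rests on the standard filtration of $\mh H(\mf t,W,k)$ and goes through after tensoring with $\C[\Gamma,\natural]$), and verify that the $\nu^1_w$ degenerate compatibly with it; equivalently, since source and target of $\Phi$ are finitely generated modules over $\mc O(\mf t)^{W\Gamma} \subseteq Z(\mh H)$, one may localize and complete at each central character $W\Gamma v \in \mf t/W\Gamma$ and invoke the usual reduction to a ``slice'' graded Hecke algebra near $v$ to fall back on Theorem \ref{thm:A} once more. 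In either approach, property (i) of the $\nu^1_w$ yields injectivity of $\Phi$ and property (ii) yields surjectivity, with \eqref{eq:6} ensuring the two are consistent. I expect this compatibility of the $\nu^1_w$ with the $k$-deformation --- equivalently, with the slice reduction --- to be the crux: it is precisely the point where the construction of \cite{SolHomAHA} with genuine representations breaks down in the twisted case, and where passing to virtual families is indispensable.

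For part (b), use $HH_0(\mh H) = \mh H/[\mh H,\mh H]$ to define $\Theta : HH_0(\mh H) \to \big(\C\otimes_\Z R(\mh H)\big)^*$ by $\Theta(h + [\mh H,\mh H])([\pi]) = \mathrm{tr}\,\pi(h)$. The image of $\Theta$ lies in the subspace described in the statement, because for an algebraic family $\mc F_\lambda$ the operator $\mc F_\lambda(h)$ depends regularly on $\lambda$, so $\lambda\mapsto\mathrm{tr}\,\mc F_\lambda(h)$ is regular. For injectivity of $\Theta$: the map $HH_0(\nu^1_w)$ sends $h + [\mh H,\mh H]$ to $\Theta(h)(\nu^1_{w,\cdot})$, so if $\Theta(h) = 0$ then $HH_0(\nu^1_w)(h + [\mh H,\mh H]) = 0$ for every $w$, whence $h \in [\mh H,\mh H]$ by part (a) with $n = 0$. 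For surjectivity onto the stated subspace, let $f$ be regular along algebraic families. Since each $\nu^1_w$ is a $\Z$-linear combination of genuine algebraic families over $\mf t^w$, the function $f(\nu^1_{w,\cdot}) : v\mapsto f(\nu^1_{w,v})$ is regular on $\mf t^w$, and the $W\Gamma$-equivariance of the $\nu^1_w$ (with the $\natural^w$-twist from part (a)) puts $\big(f(\nu^1_{w,\cdot})\big)_{w}$ in $\big(\bigoplus\nolimits_w \Omega^0(\mf t^w)\otimes\natural^w\big)^{W\Gamma}$. By part (a) there is a unique $h + [\mh H,\mh H]$ with $HH_0(\nu^1_w)(h + [\mh H,\mh H]) = f(\nu^1_{w,\cdot})$ for all $w$, so that $\Theta(h)(\nu^1_{w,v}) = f(\nu^1_{w,v})$ for all $w$ and $v$; since the classes $[\nu^1_{w,v}]$ span $\C\otimes_\Z R(\mh H)$ by property (ii), this forces $\Theta(h) = f$.
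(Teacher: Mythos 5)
Your overall architecture --- prove the $k=0$ case via Theorem \ref{thm:A}/Theorem \ref{thm:2.6}, transport it to general $k$ through the virtual families, then deduce (b) from (a) by the trace pairing --- matches the paper, and your argument for part (b) is essentially the one given there (Theorem \ref{thm:4.4}.c). The gap is in the transport step of part (a), and it sits exactly where you suspected.

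Your fallback route rests on a false premise: the assembled map $\Phi$ is \emph{not} $\mc O(\mf t)^{W\Gamma}$-linear for any pointwise module structure on the target, because for $k \neq 0$ the virtual representations $\nu^1_{w,v}$ do not admit central characters. The paper stresses this; it is the whole reason for the decomposition $\nu^1_{w,v} = \sum_{\mf d} \nu^{1,\mf d}_{w,v}$ of Lemma \ref{lem:4.9} and for Theorem \ref{thm:C}. So you cannot localize or complete $\Phi$ at a central character $W\Gamma v$ and compare with the $k=0$ picture fibrewise. What the paper does instead is split $\Phi$ as $HH_n(\phi^*)\circ HH_n(\mc F_\epsilon)$, where $HH_n(\mc F_\epsilon)$ comes from the honest families $\mf F(Q_i,\zeta_\epsilon(\sigma_i))$ whose members do admit central characters, so that $HH_n(\mc F_\epsilon)$ is $\mc O(\mf t)^{W\Gamma}$-linear for an $\epsilon$-dependent module structure (Lemma \ref{lem:4.5}); the invariance condition is imposed afterwards by the separate, merely $\C$-linear, map $HH_n(\phi^*)$. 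The actual deformation mechanism, absent from your proposal, is: (i) the bijections $\zeta_\epsilon$ of Theorem \ref{thm:4.1}, guaranteeing that the deformed families still span $\Q\otimes_\Z R$ of the deformed algebra; (ii) the fact that $HH_n(\mc F_\epsilon)$ depends \emph{algebraically} on $\epsilon$ once all the $HH_n(\mh H(\mf t,W\Gamma,\epsilon k,\natural))$ are identified with one fixed vector space via \eqref{eq:4.6}; (iii) agreement with the known case at $\epsilon=0$, propagated to small $|\epsilon|$ by passing to finite-dimensional quotients modulo powers of vanishing ideals (this is how Lemma \ref{lem:4.2} gets injectivity and Theorem \ref{thm:4.4}.b gets the image); (iv) the scaling isomorphisms $m_\eta$ of \eqref{eq:4.1} to reach all $\epsilon$, followed by a formal-completion argument (Lemma \ref{lem:1.1}) for surjectivity. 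Your first route (``verify that the $\nu^1_w$ degenerate compatibly with \eqref{eq:6}'') names the right goal but supplies no argument for it; steps (i)--(iv) are that argument, and without them the proof is incomplete.
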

We note that Theorem \ref{thm:B}.b is quite similar to the description of the zeroth Hochschild
homology for a reductive $p$-adic group obtained in \cite{BDK}.

The above does not yet describe the structure of $HH_n ( \mh H )$
as $\mc O (\mf t)^{W\Gamma}$-module, because in general the virtual representations $\nu^1_{w,v}$
do not admit a central character. Things can be improved by a canonical decomposition of the
category of finite dimensional tempered $\mh H$-representations, or at least the Grothendieck
group thereof (Theorem \ref{thm:4.7}). That induces a decomposition of many objects associated
to $\mh H$. In particular (Lemma \ref{lem:4.9})
\[
\nu^1_{v,w} = \sum\nolimits_{\mf d \in \Delta_{\mh H}} \nu^{1,\mf d}_{v,w} ,
\]
where each virtual $\mh H$-representation $\nu^{1,\mf d}_{v,w}$ admits 
an $\mc O (\mf t)^{W\Gamma}$-character $W\Gamma (cc(\delta) + v)$.

\begin{thmintro}\label{thm:C}
\textup{(see Corollary \ref{cor:4.12})}\\
There exists a canonical decomposition 
\[
HH_n (\mh H ) = \bigoplus\nolimits_{\mf d \in \Delta_{\mh H}} HH_n (\mh H )^{\mf d}
\]
such that the injection
\[
HH_n (\mh H )^{\mf d} \to \Big( \bigoplus\nolimits_{w \in W \Gamma} 
\Omega^n (\mf t^w) \otimes \natural^w \Big)^{W \Gamma}
\]
from Theorem \ref{thm:B} becomes $\mc O (\mf t)^{W \Gamma}$-linear if we let 
$\mc O (\mf t)^{W \Gamma}$ act at $(w,v)$ via evaluation at $W \Gamma (cc (\delta) + v)$.
\end{thmintro}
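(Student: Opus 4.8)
The plan is to combine the isomorphism of Theorem~\ref{thm:4.4} with the refinement of the families $\nu^1_w$ from Lemma~\ref{lem:4.9}, while keeping track of how the central subalgebra $\mc O(\mf t)^{W\Gamma}$ acts. Two facts are used repeatedly: that $\mc O(\mf t)^{W\Gamma} \subseteq Z(\mh H)$, so $HH_n(\mh H)$ carries a canonical $\mc O(\mf t)^{W\Gamma}$-module structure; and that the assignment sending an algebraic family of virtual $\mh H$-representations to the induced map on Hochschild homology (Lemma~\ref{lem:2.12}) is additive in the family.

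First I would analyse the maps attached to the subfamilies. By Lemma~\ref{lem:4.9}, for each $w \in W\Gamma$ and $\mf d \in \Delta_{\mh H}$ the assignment $v \mapsto \nu^{1,\mf d}_{v,w}$ is an algebraic family of virtual $\mh H$-representations, and every genuine constituent of $\nu^{1,\mf d}_{v,w}$ has the single $\mc O(\mf t)^{W\Gamma}$-character $W\Gamma(cc(\delta)+v)$. Hence each algebra homomorphism $\mh H \to \mc O(\mf t^w) \otimes \End_\C(V)$ entering the construction of $HH_n(\nu^{1,\mf d}_w)$ through Lemma~\ref{lem:2.12} sends $f \in \mc O(\mf t)^{W\Gamma}$ to the scalar function $v \mapsto f(W\Gamma(cc(\delta)+v))$ times the identity. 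Applying $HH_n$, using its functoriality together with the Hochschild--Kostant--Rosenberg isomorphism, and taking the relevant $\C$-linear combination, the induced map $HH_n(\nu^{1,\mf d}_w) : HH_n(\mh H) \to \Omega^n(\mf t^w) \otimes \natural^w$ intertwines the $\mc O(\mf t)^{W\Gamma}$-action on the source with multiplication by $v \mapsto f(W\Gamma(cc(\delta)+v))$ on the target.

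Next I would assemble these into the decomposition. By additivity of Lemma~\ref{lem:2.12}, the map induced by $\nu^1_w$ equals $\sum_{\mf d \in \Delta_{\mh H}} HH_n(\nu^{1,\mf d}_w)$, so the isomorphism $\Phi$ of Theorem~\ref{thm:4.4} is the corresponding sum of the contributions of the $\nu^{1,\mf d}_w$. I would define $HH_n(\mh H)^{\mf d}$ as the preimage under $\Phi$ of the span over $w \in W\Gamma$ of the images of the maps $HH_n(\nu^{1,\mf d}_w)$. To see that this is genuinely a \emph{canonical} direct sum decomposition I would argue intrinsically: the canonical decomposition of the Grothendieck group of finite-dimensional tempered $\mh H$-representations (Theorem~\ref{thm:4.7}), together with the standard-module formalism expressing every finite-dimensional $\mh H$-representation through parabolically induced tempered ones, propagates to a canonical decomposition of $\C \otimes_\Z R(\mh H)$, and hence --- through the description of $HH_0$ in Theorem~\ref{thm:B}.b and its analogue for higher $n$ in terms of the families --- to a canonical decomposition of $HH_n(\mh H)$. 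One then checks this agrees with the decomposition defined above, since $\nu^{1,\mf d'}_{v,w}$ lies in the $\mf d'$-component of $\C \otimes_\Z R(\mh H)$ and therefore pairs trivially with $HH_n(\mh H)^{\mf d}$ for $\mf d' \ne \mf d$. Granting this, $\Phi$ restricts on $HH_n(\mh H)^{\mf d}$ to the map assembled from the $HH_n(\nu^{1,\mf d}_w)$, which by the first step is injective and $\mc O(\mf t)^{W\Gamma}$-linear for the action letting $f$ act at $(w,v)$ through evaluation at $W\Gamma(cc(\delta)+v)$.

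I expect the directness and canonicity of $HH_n(\mh H) = \bigoplus_{\mf d} HH_n(\mh H)^{\mf d}$ to be the main obstacle. A priori the images of $HH_n(\nu^{1,\mf d}_w)$ and $HH_n(\nu^{1,\mf d'}_w)$ inside the same summand $\Omega^n(\mf t^w) \otimes \natural^w$ could overlap, since the central-character sheets $\{W\Gamma(cc(\delta)+v) : v \in \mf t^w\}$ and $\{W\Gamma(cc(\delta')+v) : v \in \mf t^w\}$ may well intersect; thus a naive decomposition of $HH_n(\mh H)$ according to support on $\mf t/W\Gamma$ does not suffice. Disentangling the $\mf d$-components genuinely uses the full strength of Theorem~\ref{thm:4.7} --- that the decomposition of tempered representations is an honest direct sum, not merely a spanning statement --- and the fact that property~(i) of the families $\nu^1_w$ persists within each fixed $\mf d$-component; this is also what makes the resulting decomposition independent of the (in general non-canonical) choices entering the $\nu^1_w$.
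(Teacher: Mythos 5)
Your proposal is correct and follows essentially the same route as the paper: Lemma \ref{lem:4.9} supplies the central characters of the $\nu^{1,\mf d}_{g,v}$, Theorem \ref{thm:4.7} supplies the $\mf d$-decomposition, and canonicity is obtained (as in the paper's Lemma \ref{lem:4.8}.b) by characterizing $HH_n(\mh H)^{\mf d}$ as the common kernel of the maps induced by algebraic families whose tempered part lies in $R_t(\mh H)^{\mf d'}$ for some $\mf d' \neq \mf d$, using the Zariski-density of $\sqrt{-1}\,\mf t^Q_\R$ in $\mf t^Q$. The one bookkeeping difference worth noting: the paper defines $HH_n(\mh H)^{\mf d} = HH_n(\mc F_1)^{-1} HH_n(\mc F_{\mf d}) HH_n(\mh H)$ inside $\bigoplus_i \Omega^n(\mf t^{Q_i})$, where the index sets $\{ i : i \prec \mf d \}$ are disjoint, so the directness you flag as the main obstacle is automatic there, and the real content (Lemma \ref{lem:4.8}.a) is that each projection $HH_n(\mc F_{\mf d})x$ remains in the image of $HH_n(\mc F_1)$ --- which follows from the equivariance of the $\nu^{1,\mf d}_{g,v}$ established in Lemma \ref{lem:4.6}.b.
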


\textbf{Applications to $p$-adic groups.}\\
It has been known for a long time that affine Hecke algebras play a role in the representation 
theory of reductive $p$-adic groups. The author has made that precise in full generality 
\cite{SolEnd}, although it turned out that (twisted) graded Hecke algebras are involved more
naturally. This has opened several new research options, for instance, it can be used to 
determine homologies of the Hecke algebra $\mc H (G)$ of an arbitrary reductive $p$-adic group 
$G$. Locally $\mc H (G)$ is Morita equivalent with (a local part of) a twisted graded Hecke
algebra \cite[\S 7--8]{SolEnd}. That prompted us to compute the Hochschild homology of such 
algebras. In Proposition \ref{prop:4.13} we show that there is a canonical isomorphism of
vector spaces
\[
HH_n \big( \mh H (\mf t,W,\natural) \rtimes \C[\Gamma,\natural] \big) \cong 
HH_n \big( \mc O (\mf t) \rtimes \C[W\Gamma,\natural] \big) .
\]
We expect that similarly $HH_n (\mc H (G))$ will be isomorphic to a direct sum of terms
\[
HH_n \big( \mc O (T_{\mf s}) \rtimes \C [W_{\mf s},\natural_{\mf s}] \big) .
\]
Here the complex torus $T_{\mf s}$, the finite group $W_{\mf s}$ and the 2-cocycle 
$\natural_{\mf s}$ are canonically associated to a Berstein component $\Irr (G)^{\mf s}$ of
$\Irr (G)$. The details will appear in a forthcoming paper.

One substantial complication is that we will have to deal with discontinuous families of
twisted graded Hecke algebras. To the end we will employ at least two strategies:
\begin{itemize}
\item realize Hochschild homology in terms of algebraic families of representations,
\item make such families less discontinuous in the sens that at least the involved vector
space $\mf t$ and the finite group $W\Gamma$ are locally constant.
\end{itemize}
The former already permeates this paper, for the latter we can replace the twisted graded
Hecke algebras by larger Morita equivalent algebras. In Paragraph \ref{par:Mor} we generalize 
our main results to such algebras, which combine features of twisted crossed products with 
commutative algebras and of graded Hecke algebras.\\

\textbf{Structure of the paper.}\\
In Paragraph \ref{par:HHcrossed} we introduce the characters $\natural^g$ and we prove
Theorem \ref{thm:A}. Some generalities involving families of representations, valid for many
algebras, are discussed in Paragraph \ref{par:famCrossed}. To prepare for Theorem 
\ref{thm:B}, we establish a simpler analogue with algebras of the form $\mc O (V) \rtimes
\C [G,\natural]$, in Paragraph \ref{par:realiz}. As an intermediate step we map
$HH_n (\mc O (V) \rtimes \C [G,\natural])$ to $n$-forms on some algebraic varieties, via
families of representations.

We start Section \ref{sec:HHGHA} with recalling the definition of a (twisted) graded Hecke
algebra. Then we generalize some representation theoretic results, which allow to reduce
certain issues for $\mh H (\mf t,W,k) \rtimes \C[\Gamma,\natural]$ to its version
$\mc O (\mf t) \rtimes \C [W\Gamma,\natural]$ with $k=0$. Next we prove Theorem \ref{thm:B}, 
in many small steps. Again it goes via differential forms coming from auxiliary algebraic 
families of representations. After that we wrap up the proof of Theorem \ref{thm:C}.\\[2mm]

\textbf{Acknowledgements.}\\
We thank David Kazhdan and Roman Bezrukavnikov for some inspiring email conversations
on related topics.

\renewcommand{\theequation}{\arabic{section}.\arabic{equation}}
\numberwithin{equation}{section}

\section{Twisted crossed product algebras}
\label{sec:crossed}

\subsection{Hochschild homology via differential forms} \ 
\label{par:HHcrossed}

Let $G$ be a finite group, let $\natural : G \times G \to \C^\times$ be a 2-cocycle and 
form the twisted group algebra $\C [G,\natural]$. It has a basis $\{ T_g : g \in G \}$ 
and multiplication rules
\[
T_g \cdot T_{g'} = \natural (g,g') T_{g g'} .
\]
By the theory of Schur multipliers \cite[\S 53]{CuRe} there exists a finite central extension
\begin{equation}\label{eq:2.20}
1 \to Z \to \tilde G \to G \to 1
\end{equation}
such that the corresponding lift of $\natural$ is trivial in $H^2 (\tilde G,\C^\times)$.
Then there exists a minimal central idempotent $p_\natural \in \C [Z]$ and an algebra isomorphism
\begin{equation}\label{eq:2.1}
\begin{array}{ccc}
p_\natural \C [\tilde G] & \to & \C [G,\natural] \\
p_\natural \tilde g & \mapsto & c_{\tilde g} T_g 
\end{array}.
\end{equation}
Here $\tilde g \in \tilde G$ has image $g$ in $G$, and $c_{\tilde g} \in \C$ is a suitable scalar.
Notice that $p_\natural \C [\tilde G]$ is a direct summand of the semisimple algebra $\C [\tilde G]$,
so itself semisimple. The minimal idempotent $p_\natural \in \C [Z]$ is associated to some 
character $\chi_\natural$ of $Z$, so 
\[
p_\natural = |Z|^{-1} \sum\nolimits_{z \in Z} \chi_\natural^{-1}(z) z.
\]
Let $\natural^g$ be the character
\[
\begin{array}{ccl}
Z_{\tilde G} (g) & \to & \C^\times \\
\tilde h & \mapsto & \chi_\natural^{-1} ([\tilde g,\tilde h]) = \chi_\natural ([\tilde h,\tilde g])
\end{array} .
\]
Here $\tilde g \in \tilde G$ is a lift of $g \in G$, and the choice does not matter because any
other lift differs from $\tilde g$ by a central element. The kernel of $\natural^g$ contains $Z$, 
so we can also regard it as a character of $Z_G (g)$. As such, one can also express it as
\begin{equation}\label{eq:2.31}
\natural^g (h) = T_g T_h T_g^{-1} T_h^{-1} = T_h T_g T_h^{-1} T_g^{-1} .
\end{equation}
This shows that $\natural^g$ is insensitive to rescaling $T_g$ and $T_h$, which entails that
$\natural^g$ depends only on $g$ and the cohomology class of $\natural$.

Recall that $HH_0 (A)^*$ is the space of trace functions on an algebra $A$.

\begin{lem}\label{lem:2.3}
\enuma{
\item For $g \in G$ with $\natural^g = 1$, there exists a unique trace function $\nu_g$
on $\C [G,\natural]$ with $\nu_g (T_g) = 1$ and $\nu_g (T_{g'}) = 0$ if $g$ and $g'$ are not
conjugate in $G$.
\item Let $\langle G \rangle$ be a set of representatives for the conjugacy classes in $G$.
The set $\{ \nu_g : g \in \langle G \rangle, \natural^g = 1\}$ is a basis of 
$HH_0 (\C [G,\natural])^*$. The number of inequivalent irreducible representation of 
$\C[G,\natural]$ equals $| \{ g \in \langle G \rangle, \natural^g = 1\} |$.
}
\end{lem}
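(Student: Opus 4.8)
The plan is to deduce everything from the structure theory set up in \eqref{eq:2.20}--\eqref{eq:2.1}, transferring the classical picture for $HH_0(\C[\tilde G]) = \C[\tilde G]_{\mathrm{ab}}$ (functions on conjugacy classes) to the twisted setting via the idempotent $p_\natural$.

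First, for part (a), I would observe that a trace function on $\C[G,\natural]$ is the same as a linear functional $\nu$ with $\nu(T_g T_{g'}) = \nu(T_{g'} T_g)$ for all $g, g'$. Writing $\nu(T_g) = c_g$, the trace condition applied to $T_h$ and $T_g$ gives, using \eqref{eq:2.31}, the relation $c_{hgh^{-1}} = \natural^{hgh^{-1}}(h)^{-1} c_g$ (one must be slightly careful tracking which $\natural^{(\cdot)}$ appears; the point is that $T_h T_g T_h^{-1} = \natural^{g}(h)^{-1} T_{hgh^{-1}}$ up to the appropriate scalar, so the trace identity forces $c_g$ and $c_{hgh^{-1}}$ to be proportional). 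Consequently, on each $G$-conjugacy class $C$ the restriction of $\nu$ is determined by its value at one point $g \in C$, \emph{provided} the cocycle of proportionality around the stabilizer $Z_G(g)$ is trivial — and that cocycle is exactly $\natural^g|_{Z_G(g)}$. Thus when $\natural^g = 1$ we may freely set $c_g = 1$, propagate consistently over the class, and set $c_{g'} = 0$ for all $g'$ in classes not meeting the $G$-orbit of $g$; conversely, when $\natural^g \neq 1$ the only consistent choice on that class is $c \equiv 0$. This gives existence and uniqueness of $\nu_g$, and simultaneously shows that $\{\nu_g : g \in \langle G\rangle,\ \natural^g = 1\}$ spans $HH_0(\C[G,\natural])^*$; linear independence is immediate since $\nu_g(T_g) = 1$ while $\nu_{g'}(T_g) = 0$ for $g' \neq g$ in $\langle G\rangle$.

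For the representation-theoretic count in part (b): since $\C[G,\natural]$ is semisimple (being isomorphic to the direct summand $p_\natural\C[\tilde G]$ of $\C[\tilde G]$ by \eqref{eq:2.1}), $HH_0(\C[G,\natural])$ has dimension equal to the number of irreducible $\C[G,\natural]$-representations; equivalently $\dim HH_0(\C[G,\natural])^*$ equals that number. Combining with part (b)'s first assertion gives $\#\Irr(\C[G,\natural]) = |\{g \in \langle G\rangle : \natural^g = 1\}|$. Alternatively, and perhaps more cleanly, one can work directly inside $\C[\tilde G]$: the irreducible $\C[G,\natural]$-representations correspond to those $\widetilde{\pi} \in \Irr(\tilde G)$ with central character $\chi_\natural$ on $Z$, i.e.\ those with $\widetilde{\pi}(z) = \chi_\natural(z)\,\mathrm{id}$. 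Their number equals $\dim(p_\natural\C[\tilde G])_{\mathrm{ab}}$, which one computes as the number of $\tilde G$-conjugacy classes $\tilde C$ on which $p_\natural$ acts nontrivially; and the condition that $\sum_{\tilde g \in \tilde C} \tilde g$ is not killed by $p_\natural$ unwinds, by a short averaging argument over $Z$ and the orbit, to $\chi_\natural$ being trivial on the relevant commutator subgroup — precisely $\natural^g|_{Z_G(g)} = 1$ in the notation above. Either route closes the proof.

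**Main obstacle.** The routine-looking but genuinely fiddly point is the bookkeeping of scalars: relating the conjugation action $T_h T_g T_h^{-1}$ to $T_{hgh^{-1}}$ introduces a scalar which is a \emph{cocycle} (not a character) in the variable $h$ as $h$ ranges over all of $G$, and one must check that its restriction to the stabilizer $Z_G(g)$ is the honest character $\natural^g|_{Z_G(g)}$, so that "consistency around the orbit" is controlled by a single cohomological obstruction rather than something worse. This is exactly the content hidden in \eqref{eq:2.31}, so the work is to apply it carefully; no deeper difficulty is expected.
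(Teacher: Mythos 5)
Your proof is correct, but for part (a) it takes a genuinely different route from the paper. The paper never computes inside $\C[G,\natural]$ directly: it extends trace functions to $\C[\tilde G]$ via the central extension \eqref{eq:2.20}, takes the indicator functions $1_{\tilde C}$ of $\tilde G$-conjugacy classes as a basis of $HH_0(\C[\tilde G])^*$, and checks which of these survive restriction to $p_\natural \C[\tilde G]$ (they die exactly when $\tilde g$ is conjugate to $\tilde g z$ with $\chi_\natural (z)\neq 1$, i.e.\ when $\natural^g \neq 1$). You instead derive the propagation rule $c_{hgh^{-1}} = \natural^g(h)^{-1} c_g$ directly from the trace identity and observe that the only obstruction to a consistent nonzero assignment on a class is the character $\natural^g|_{Z_G(g)}$. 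This is sound, but note that to propagate consistently you need the well-definedness of $c_{hgh^{-1}}$ independent of the choice of $h$ conjugating $g$ to $hgh^{-1}$, which is exactly the cocycle identity $\natural^g(\tilde h h) = \natural^{hgh^{-1}}(\tilde h)\,\natural^g(h)$ that the paper only proves later as Lemma \ref{lem:2.10}.a; you correctly flag this as the fiddly point, and the same identity (with $h = a^{-1}$, applied to $x = ab$) also verifies that the propagated functional really is a trace. What your route buys is that part (a) needs no Schur-multiplier extension at all and makes the conjugation cocycle explicit; what the paper's route buys is brevity, since the classification of trace functions on the ordinary group algebra $\C[\tilde G]$ is quoted rather than rederived. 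Your alternative argument for the count in part (b), via irreducibles of $\tilde G$ with $Z$-central character $\chi_\natural$, is essentially the paper's own mechanism and closes the proof the same way (semisimplicity identifies $\#\Irr$ with $\dim HH_0$).
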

\begin{proof}
(a) Since $\C [G,\natural] \cong p_\natural \C [\tilde G]$ is a direct summand of $\C [\tilde G]$,
every trace function on $p_\natural \C [\tilde G]$ can be extended to one on $\C [\tilde G]$.
A basis of $HH_0 (\C [\tilde G])^*$ is the set of indicator functions
$1_{\tilde C}$ for the conjugacy classes $\tilde C$ in $\tilde G$. 

Suppose that $\tilde g \in \tilde C$ and $\natural^g \neq 1$ (where $g$ is the image of
$\tilde g$ in $G$). Then $\tilde g$ is $\tilde G$-conjugate to $\tilde g z$ for some $z \in Z$
with $\chi_\natural (z) \neq 1$. Hence $1_{\tilde C} (p_\natural T_{\tilde g})$ is a multiple 
of $\sum_{n=1}^{\mr{ord}(z)} \chi_\natural (z^n) = 0$, which implies that $1_{\tilde C}$
vanishes on $p_\natural \C [\tilde G]$. 

On the other hand, suppose that $\tilde g \in \tilde C$ an $\natural^g = 1$. Then
$1_{\tilde C} \big|_{p_\natural \C [\tilde G]}$ is nonzero and has support 
\[
p_\natural \, \mr{span} \{ T_{\tilde g} : \tilde g \in \tilde C\} =
p_\natural \, \mr{span} \{ T_{\tilde g z} : \tilde g \in \tilde C, z \in Z \} . 
\]
Thus $1_{\tilde C}$ defines a trace function on $\C [G,\natural] \cong p_\natural \C [\tilde G]$
supported on the conjugacy class of $g$ in $G$. A unique scalar multiple $\nu_g$ of $1_{\tilde C}$
has $\nu_g (T_g) = 1$.\\
(b) The above argument also shows that $\{ \nu_g : g \in G\}$ spans $HH_0 (\C[G,\natural])^*$. 
If we pick just one $g$ from every conjugacy class, the span does not change
and the set becomes linearly independent, so a basis. 

As the algebra $\C[G,\natural]$ is semisimple, it number of irreducible representations equals
the dimension of $HH^0 (\C [G,\natural])$.
\end{proof}

Let $V$ be a nonsingular affine variety over $\C$, with an algebraic $G$-action. Then $G$
also acts on the algebra of regular functions $\mc O (V)$. We want to compute the Hochschild 
homology of $\mc O (V) \rtimes \C [G,\natural]$ (as defined in the introduction).
Let $\langle G \rangle \subset G$ be a set of representatives for the conjugacy classes in $G$. 
We denote the set of (algebraic) differential $n$-forms on $V$ by $\Omega^n (V)$.

\begin{thm}\label{thm:2.1}
There exists an isomorphism of $\mc O (V)^G$-modules
\[
HH_n \big( \mc O (V) \rtimes \C [G,\natural] \big) \cong \bigoplus\nolimits_{g \in \langle G 
\rangle} \big( \Omega^n (V^g) \otimes \natural^g \big)^{Z_G (g)} .
\]
\end{thm}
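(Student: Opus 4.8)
The plan is to reduce the computation to the untwisted case, where the formula \eqref{eq:3} of Brylinski and Nistor applies, and then cut out the relevant part with the idempotent $p_\natural$. First, using the central extension \eqref{eq:2.20} and the isomorphism \eqref{eq:2.1}, I identify $\mc O (V) \rtimes \C [G,\natural]$ with $p_\natural \big( \mc O (V) \rtimes \tilde G \big)$, where $\tilde G$ acts on $V$ through its quotient $G$; thus $Z$ acts trivially on $V$, so $\C [Z]$ lies in the centre of $\mc O (V) \rtimes \tilde G$ and $p_\natural$ is a central idempotent there. Since $\mc O (V) \rtimes \tilde G$ then decomposes as a product of two-sided ideals, Hochschild homology splits and the projection onto the $p_\natural$-summand is multiplication by $p_\natural$:
\[
HH_n \big( \mc O (V) \rtimes \C [G,\natural] \big) \cong p_\natural \, HH_n \big( \mc O (V) \rtimes \tilde G \big) ,
\]
an isomorphism of modules over $Z \big( \mc O (V) \rtimes \tilde G \big)$, in particular over $\mc O (V)^{\tilde G} = \mc O (V)^G$.

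Next I would apply \eqref{eq:3} to $\mc O (V) \rtimes \tilde G$ --- legitimate because $V$ is nonsingular, hence so is each $V^{\tilde g}$, and $\tilde G$ is finite --- but in the sharper form in which it is proved: a decomposition of the Hochschild complex of $\mc O (V) \rtimes \tilde G$ indexed by $\tilde G$, the $\tilde g$-summand being $HH_n$ of $\mc O (V)$ with the bimodule structure twisted by $\tilde g$, which equals $\Omega^n (V^{\tilde g})$; after taking $\tilde G$-conjugation invariants this gives $HH_n \big( \mc O (V) \rtimes \tilde G \big) \cong \big( \bigoplus\nolimits_{\tilde g \in \tilde G} \Omega^n (V^{\tilde g}) \big)^{\tilde G}$. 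From this description I read off the action of $\C [Z]$: multiplication by $z \in Z$ sends the $\tilde g$-summand to the $z\tilde g$-summand, and there it is the identity map $\Omega^n (V^{\tilde g}) = \Omega^n (V^{z\tilde g})$, because $z$ acts trivially on $V$, so the bimodules twisted by $\tilde g$ and by $z\tilde g$ literally coincide. Hence $\C [Z]$ acts purely by translation of the index set $\tilde G$, commuting with the $\tilde G$-conjugation action and with the $\mc O (V)^G$-module structure.

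It then remains to extract the $p_\natural$-isotypic part, i.e. the $\chi_\natural$-isotypic part for this $Z$-action. Partitioning $\tilde G$ into $Z$-orbits, which are the fibres of $\tilde G \to G$, the span of the summands over the fibre of $g \in G$ is $\C [Z] \otimes_\C \Omega^n (V^g)$ with $Z$ acting by left translation on $\C [Z]$, so its $\chi_\natural$-isotypic part is one copy of $\Omega^n (V^g)$. After this projection the residual action of $G = \tilde G / Z$ on $\bigoplus\nolimits_{g \in G} \Omega^n (V^g)$ is the conjugation action twisted by scalars; tracing the identifications, a lift $\tilde h$ of $h \in Z_G (g)$ acts on the $g$-summand by $\omega \mapsto \chi_\natural ([\tilde h,\tilde g]) \cdot h^* \omega = \natural^g (h) \cdot h^* \omega$, which is exactly the action on $\Omega^n (V^g) \otimes \natural^g$. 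Taking invariants, keeping one $g$ per conjugacy class, and noting that every step was $\mc O (V)^G$-linear, I obtain
\[
HH_n \big( \mc O (V) \rtimes \C [G,\natural] \big) \cong \bigoplus\nolimits_{g \in \langle G \rangle} \big( \Omega^n (V^g) \otimes \natural^g \big)^{Z_G (g)} .
\]

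The step I expect to cost the most care is the last one. One must make sure \eqref{eq:3} is used in a form compatible with the $\C [Z]$-action --- which is why the decomposition along $\tilde G$ with $\tilde g$-twisted bimodules, rather than a bare vector-space isomorphism, is the right input --- and then carefully check that disentangling the $Z$-translation from the $\tilde G$-conjugation produces precisely the characters $\natural^g$, independently of the auxiliary choices of lifts $\tilde g \in \tilde G$ and of the extension \eqref{eq:2.20}; the insensitivity to rescaling established around \eqref{eq:2.31} is what makes this well-defined.
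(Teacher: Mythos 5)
Your proposal is correct and follows essentially the same route as the paper: reduce to $p_\natural (\mc O (V) \rtimes \tilde G)$ via \eqref{eq:2.1}, invoke the Brylinski--Nistor decomposition in its refined form as a direct sum of subcomplexes $\tilde g C_* (\mc O (V))$, and then disentangle the $Z$-translation from the twisted $\tilde G$-conjugation to identify the $\chi_\natural$-isotypic part with $\bigoplus_{g} (\Omega^n (V^g) \otimes \natural^g)^{Z_G (g)}$. The only (immaterial) difference is the order of operations --- you project by $p_\natural$ before taking $Z_G(g)$-invariants, while the paper does the reverse in \eqref{eq:2.8}--\eqref{eq:2.10} --- and your computation of the twisting character $\chi_\natural ([\tilde h,\tilde g]) = \natural^g (h)$ matches the paper's.
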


Important background for all our computations is the Hochschild--Kostant--Rosen\-berg
(HKR) theorem, which provides a natural isomorphism of $\mc O (V)$-modules
\begin{equation}\label{eq:2.21}
HH_n (\mc O (V)) \cong \Omega^n (V) .
\end{equation}
Recall \cite[\S 1.1]{Lod} that the Hochschild homology of a unital algebra can be computed as 
the homology of the bar complex $(C_* (A), b_*)$, where $C_* (A) = A^{\otimes (n+1)}$ and
\[
b_n (a_0 \otimes \cdots \otimes a_n) = \sum_{i=0}^{n-1} a_0 \otimes \cdots \otimes a_i a_{i+1} \otimes
\cdots \otimes a_n + (-1)^n a_n a_0 \otimes a_1 \otimes \cdots \otimes a_{n-1} .
\]
The isomorphism \eqref{eq:2.21} is realized by the map
\begin{equation}\label{eq:2.22}
\begin{array}{cccc}
\Omega : & C_n (\mc O (V^P)) & \to & \Omega^n (\mc O (V^P)) \\
& f_0 \otimes f_1 \otimes \cdots \otimes f_n & \mapsto & f_0 \textup{d}f_1 \cdots \textup{d}f_n / n!
\end{array}.
\end{equation}
We let $\tilde G$ act on $V$ via its quotient $G$. Then \eqref{eq:2.1} induces an algebra isomorphism
\[
\mc O (V) \rtimes \C [G,\natural] \cong \mc O (V) \rtimes p_\natural \C [\tilde G] =
p_\natural (\mc O (V) \rtimes \tilde G) ,
\]
where the right hand side is a direct summand of the crossed product algebra 
$\mc O (V) \rtimes \tilde G$. Brylinski \cite{Bry} and Nistor \cite{Nis} computed the Hochschild
homology of such algebras. It can be done in three steps:
\begin{itemize}
\item For each $\tilde g \in \tilde G$, $C_* (\mc O (V) \rtimes \tilde G)$ contains a subcomplex
$\tilde g C_* (\mc O (V))$. It can be identified with the complex that computes the Hochschild
homology of $\mc O (V)$ with coefficients in the bimodule $\tilde g \mc O (V)$, so
\begin{equation}\label{eq:2.4}
H_n \big( \tilde g C_* (\mc O (V)), b_* \big) = HH_n \big( \mc O (V), \tilde g \mc O (V) \big) .
\end{equation}
\item Varying on the HKR theorem, one computes that
\begin{equation}\label{eq:2.5}
HH_n \big( \mc O (V), \tilde g \mc O (V) \big) \cong \Omega^n (M^{\tilde g} )^{Z_{\tilde G} (\tilde g)} .
\end{equation}
On the level of Hochschild complexes, the isomorphism comes from the map \eqref{eq:2.22} followed
by averaging over $Z_G (g)$.
\item Let $\langle \tilde G \rangle$ be a set of representatives for the conjugacy classes in
$\tilde G$. One shows that the inclusion
\[
\bigoplus\nolimits_{\tilde g \in \langle \tilde G \rangle} \tilde g C_* (\mc O (V)) \to
C_* (\mc O (V) \rtimes \tilde G)
\]
is a quasi-isomorphism.
\end{itemize}
Consequently one obtains an isomorphism of $\mc O (V)^{\tilde G}$-modules 
\begin{equation}\label{eq:2.2}
HH_n \big( \mc O (V) \rtimes \tilde G \big) \cong \bigoplus\nolimits_{\tilde g \in \langle \tilde G 
\rangle} \Omega^n (V^{\tilde g} )^{Z_{\tilde G} (\tilde g)} \cong 
\Big( \bigoplus\nolimits_{\tilde g \in \tilde G} \Omega^n (V^{\tilde g} ) \Big)^G ,
\end{equation}
which is made natural in \cite[Theorem 2.11]{Nis}.

While \eqref{eq:2.2} holds for any smooth action of a finite group on a manifold, our setting is more
specific, with a central subgroup $Z$ that acts trivially. Hence $\Omega^n (V^{\tilde g})^{Z_{\tilde G}
(\tilde g)}$ depends only the image $g$ of $\tilde g$ in $G$:
\[
\Omega^n (V^{\tilde g})^{Z_{\tilde G} (\tilde g)} = \Omega^n (V^g)^{Z_G (\tilde g)} .
\]
Notice that $Z_G (\tilde g)$ makes sense because the conjugation action of $\tilde G$ on itself 
factors through an action of $G$ on $\tilde G$. In general $Z_G (\tilde g)$ is contained in 
$Z_G (g)$, but they need not be equal. With this at hand, \eqref{eq:2.2} specializes to an isomorphism
of $\mc O (V)$-modules
\begin{equation}\label{eq:2.3}
HH_n \big( \mc O (V) \rtimes \tilde G \big) \cong 
\Big( \bigoplus\nolimits_{\tilde g Z \in G} \bigoplus\nolimits_{z \in Z} \Omega^n (M^g ) \Big)^G . 
\end{equation}
The difference between the various direct summands $\Omega^n (M^g)$ is that they come from distinct
subcomplexes $\tilde g z C_* (\mc O (V))$.\\

\noindent \emph{Proof of Theorem \ref{thm:2.1}}\\
Since $HH_n (A)$ is always a module over $Z(A)$ and $p_\natural$ is a central idempotent:
\begin{equation}\label{eq:2.6}
HH_n \big( \mc O (V) \rtimes \C [G,\natural] \big) \cong
HH_n \big( p_\natural (\mc O (V) \rtimes \C [G,\natural] ) \big) =
p_\natural HH_n \big( \mc O (V) \rtimes \C [G,\natural] \big) .
\end{equation}
By \eqref{eq:2.3}, \eqref{eq:2.4} and \eqref{eq:2.5} the right hand side of \eqref{eq:2.6} equals
\[
\Big( p_\natural \bigoplus\nolimits_{z \in Z} H_n \big( \tilde g C_* (\mc O (V)), b_* \big) \Big)^G .
\]
This expression decomposes naturally as a direct sum over the conjugacy classes of $G$, namely
\begin{equation}\label{eq:2.7}
HH_n \big( \mc O (V) \rtimes \C [G,\natural] \big) \cong \bigoplus\nolimits_{\tilde g Z \in \langle G 
\rangle} p_\natural \Big( \bigoplus\nolimits_{z \in Z} 
H_n \big( \tilde g C_* (\mc O (V)), b_* \big) \Big)^{Z_G (g)} .
\end{equation}
The action of $h \in Z_G (g)$ (with a lift $\tilde h \in \tilde G$) sends 
$\tilde g z \cdot c \in \tilde g z C_* (\mc O (V))$ to
\[
\tilde h \tilde g {\tilde h}^{-1} z \cdot h(c) = \tilde g [\tilde h,\tilde g] z \cdot h (c) .
\]
We find
\begin{multline}\label{eq:2.8}
\Big( \bigoplus\nolimits_{z \in Z} H_n \big( \tilde g C_* (\mc O (V)), b_* \big) \Big)^{Z_G (g)} = \\
\Big\{ \sum\nolimits_{z \in Z} \omega_z \in \bigoplus\nolimits_{z \in Z} \Omega^n (\mc O (V^g)) :
\omega_{[\tilde h, \tilde g] z} = h (\omega_z) \; \forall h \in Z_G (g) \Big\} .
\end{multline}
The shape of $p_\natural$ entails that its image in \eqref{eq:2.8} is
\begin{multline}\label{eq:2.9} 
\Big\{ \sum\nolimits_{z \in Z} \omega_z \in \bigoplus\nolimits_{z \in Z} \Omega^n (\mc O (V^g)) : \\
h (\omega_{[\tilde g,\tilde h] z}) = \omega_z \; \forall h \in Z_G (g), \omega_{z' z} = 
\chi_\natural^{-1}(z') \omega_z \; \forall z' \in Z \Big\} .
\end{multline}
The two conditions in \eqref{eq:2.9} are equivalent with
\begin{equation}\label{eq:2.10}
\omega_{z' z} = \chi_\natural^{-1}(z') \omega_z \text{ and } 
\omega_z = \natural^g (h) h(\omega_z) \quad \forall z,z' \in Z, h \in Z_G (g) .
\end{equation}
From \eqref{eq:2.7}--\eqref{eq:2.10} we obtain the required description of 
$HH_n \big( \mc O (V) \rtimes \C [G,\natural] \big)$ as $\mc O (V)^G$-module. $\qquad \Box$\\

Unfortunately this isomorphism does not seem to be natural, unlike \eqref{eq:2.2}. In the way
we constructed it, it depends on a choice of representatives in $\tilde G$ for the conjugacy
classes of $G$ and the choice of the algebra isomorphism \eqref{eq:2.1}. This can be improved
a little by a more explicit construction, like in the case without a 2-cocycle. Indeed, like
over there we can compose \eqref{eq:2.22} with averaging over $Z_G (g)$. But here conjugation
by $h \in Z_G (g)$ is a bit more subtle, namely
\begin{align*}
T_h (T_g f_0 \otimes f_1 \otimes \cdots \otimes f_n) T_h^{-1} & =
T_h T_g T_h ^{-1} \, h \cdot (f_0 \otimes f_1 \otimes \cdots \otimes f_n) \\
& = \natural^g (h) T_g \, h \cdot (f_0 \otimes f_1 \otimes \cdots \otimes f_n) .
\end{align*}
Hence we can realize Theorem \ref{thm:2.1} for the summand indexed by $g$ as
\begin{equation}\label{eq:2.23}
\begin{array}{ccc}
T_g C_n (\mc O (V)) & \to & ( \Omega^n (V^g) \otimes \natural^g )^{Z_G (g)} \\
T_g f_0 \otimes f_1 \otimes \cdots \otimes f_n & \mapsto & {\displaystyle \sum_{h \in Z_G (g)} 
\frac{\natural^g (h) \, h \cdot (f_0 \textup{d} f_1 \cdots \textup{d} f_n )}{|Z_G (g)| \, n!} }
\end{array}.
\end{equation}
This entails that on the summand indexed by $g$, the isomorphism from Theorem \ref{thm:2.1}
is canonical up to a scalar (from the choice of $T_g$). To analyse the dependence on the choice
of representatives of the conjugacy classes, we define
\[
\natural^g (h) = T_h T_g T_h^{-1} T_{h g h^{-1}}^{-1} \in \C^\times
\qquad \text{for all } g,h \in G.
\]

\begin{lem}\label{lem:2.10}
Let $g, h, \tilde h \in G$. 
\enuma{
\item $\natural^g (\tilde h h) = \natural^{h g h^{-1}}(\tilde h) \natural^g (h)$.
\item $\natural^g (h) : (\natural^g,\C) \to (h^{-1} \cdot \natural^{h g h^{-1}}, \C)$
is an isomorphism of $Z_G (g)$-representations.
}
\end{lem}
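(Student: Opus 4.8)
The plan is to unwind the definition $\natural^g(h) = T_h T_g T_h^{-1} T_{hgh^{-1}}^{-1}$ and use only the associativity of multiplication in $\C[G,\natural]$, i.e.\ the cocycle identity in disguise. For part (a), I would compute $T_{\tilde h h} T_g T_{\tilde h h}^{-1}$ in two ways. On one hand, $T_{\tilde h h} = \natural(\tilde h, h)^{-1} T_{\tilde h} T_h$ up to a scalar that will cancel with its inverse, so conjugation by $T_{\tilde h h}$ equals conjugation by $T_{\tilde h}$ followed by conjugation by $T_h$ (the scalars are central and cancel). Thus
\[
T_{\tilde h h} T_g T_{\tilde h h}^{-1} = T_{\tilde h}\bigl(T_h T_g T_h^{-1}\bigr) T_{\tilde h}^{-1}
= T_{\tilde h}\bigl(\natural^g(h) T_{hgh^{-1}}\bigr) T_{\tilde h}^{-1}
= \natural^g(h)\, T_{\tilde h} T_{hgh^{-1}} T_{\tilde h}^{-1}
= \natural^g(h)\,\natural^{hgh^{-1}}(\tilde h)\, T_{\tilde h h g h^{-1}\tilde h^{-1}}.
\]
On the other hand, directly from the definition, $T_{\tilde h h} T_g T_{\tilde h h}^{-1} = \natural^g(\tilde h h) T_{\tilde h h g (\tilde h h)^{-1}}$, and since $\tilde h h g(\tilde h h)^{-1} = \tilde h(hgh^{-1})\tilde h^{-1}$ the two group elements agree. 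Comparing the scalar coefficients of the same basis element $T_{\tilde h h g h^{-1}\tilde h^{-1}}$ gives $\natural^g(\tilde h h) = \natural^{hgh^{-1}}(\tilde h)\,\natural^g(h)$, which is exactly (a).

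For part (b), I first note that multiplication by the scalar $\natural^g(h)$ is trivially a vector-space isomorphism $\C \to \C$, so the only content is that it intertwines the two $Z_G(g)$-actions. The left-hand representation is the character $\natural^g$ restricted to $Z_G(g)$ (a character by \eqref{eq:2.31}, or by Lemma~\ref{lem:2.3} and the discussion around it), and the right-hand one is $h^{-1}\cdot\natural^{hgh^{-1}}$, meaning the character $\tilde h \mapsto \natural^{hgh^{-1}}(h\tilde h h^{-1})$ on $Z_G(g)$ (note $h\tilde h h^{-1} \in Z_G(hgh^{-1})$ when $\tilde h \in Z_G(g)$, so this makes sense). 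So I must check that for all $\tilde h \in Z_G(g)$,
\[
\natural^g(h)\cdot\natural^g(\tilde h) = \natural^{hgh^{-1}}(h\tilde h h^{-1})\cdot\natural^g(h).
\]
This follows from part (a) applied twice: once with the pair $(\tilde h h)$ and once with $(h\tilde h)$, together with the identity $\natural^g(\tilde h h) = \natural^g(h\tilde h)$ valid because $\tilde h$ centralizes $g$ so $\tilde h h g h^{-1}\tilde h^{-1} = h\tilde h g \tilde h^{-1} h^{-1}$ and one can also write $h\tilde h = (h\tilde h h^{-1})h$. Concretely, $\natural^g(\tilde h h) = \natural^{hgh^{-1}}(\tilde h)\natural^g(h)$ by (a), but when $\tilde h \in Z_G(g)$ we also have $\natural^g(\tilde h h)=\natural^g(h\tilde h)$, and applying (a) to $h\tilde h = (h\tilde h h^{-1})\cdot h$ gives $\natural^g(h\tilde h) = \natural^{hgh^{-1}}(h\tilde h h^{-1})\,\natural^g(h)$. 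Combining yields $\natural^{hgh^{-1}}(\tilde h)\natural^g(h) = \natural^{hgh^{-1}}(h\tilde h h^{-1})\natural^g(h)$; dividing by $\natural^g(h)$ and multiplying back by it in the right spot gives the intertwining identity, so the map is an isomorphism of $Z_G(g)$-representations.

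I expect the only real subtlety to be bookkeeping: keeping straight which group element lies in which centralizer, and making sure the scalars from $T_{\tilde h h} = \natural(\tilde h,h)^{-1}T_{\tilde h}T_h$ genuinely cancel when one conjugates (they do, since $T_{\tilde h h}^{-1}$ contributes the reciprocal scalar and the scalar is central). There is no deep obstacle; the statement is a formal consequence of associativity in $\C[G,\natural]$ and the fact that conjugation is insensitive to rescaling the $T$'s, which was already observed after \eqref{eq:2.31}. The cleanest write-up compares coefficients of a single basis vector $T_{\tilde h h g h^{-1}\tilde h^{-1}}$, which sidesteps any explicit manipulation of the 2-cocycle $\natural$ itself.
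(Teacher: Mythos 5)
Part (a) of your proposal is correct and is essentially the paper's computation, just run forwards: you conjugate $T_g$ by $T_{\tilde h h}$ in two ways and compare the coefficients of the single basis vector $T_{\tilde h h g h^{-1} \tilde h^{-1}}$, whereas the paper manipulates the rearranged expression $\natural^g(h) T_g^{-1} = T_h^{-1} T_{hgh^{-1}}^{-1} T_h$. Both hinge on the same observation that conjugation by $T_{\tilde h h}$ equals conjugation by $T_{\tilde h} T_h$ because the cocycle scalar cancels. Nothing to object to there.

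Part (b) has a genuine gap. The bridging identity $\natural^g(\tilde h h) = \natural^g(h \tilde h)$ that you invoke is not justified: your stated reason, that $\tilde h$ centralizes $g$ so $\tilde h h g h^{-1} \tilde h^{-1} = h \tilde h g \tilde h^{-1} h^{-1}$, is false in general, since the left side is $\tilde h (h g h^{-1}) \tilde h^{-1}$ while the right side is $h g h^{-1}$, and $\tilde h \in Z_G(g)$ does not place $\tilde h$ in $Z_G(h g h^{-1})$. Worse, even if one grants that identity, your chain of equalities only yields $\natural^{hgh^{-1}}(\tilde h) = \natural^{hgh^{-1}}(h \tilde h h^{-1})$, whereas the intertwining condition you correctly isolated at the start is $\natural^g(\tilde h) = \natural^{hgh^{-1}}(h \tilde h h^{-1})$; these two differ exactly by the unproved claim $\natural^{hgh^{-1}}(\tilde h) = \natural^g(\tilde h)$, which is equivalent to the bridging identity itself, so the argument is circular. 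The repair is short and is what the paper does: apply part (a) to the decomposition $h\tilde h = h \cdot \tilde h$, i.e.\ with the roles of ``$\tilde h$'' and ``$h$'' in the lemma played by $h$ and $\tilde h$ respectively. This gives
\[
\natural^g(h\tilde h) = \natural^{\tilde h g \tilde h^{-1}}(h)\, \natural^g(\tilde h)
= \natural^g(h)\, \natural^g(\tilde h),
\]
where the second equality uses $\tilde h g \tilde h^{-1} = g$. Comparing with your (correct) other application of (a), namely $\natural^g(h\tilde h) = \natural^{hgh^{-1}}(h \tilde h h^{-1})\, \natural^g(h)$, immediately yields $\natural^g(h)\natural^g(\tilde h) = \natural^{hgh^{-1}}(h \tilde h h^{-1})\natural^g(h)$, which is exactly the required intertwining identity. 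No comparison between $\natural^g(\tilde h h)$ and $\natural^g(h\tilde h)$ is needed, and the element $\tilde h h$ plays no role.
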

\begin{proof}
(a) Using $\natural^g (h) = T_h^{-1} T_{h g h^{-1}}^{-1} T_h T_g$ we compute
\begin{align*}
\natural^{h g h^{-1}} (\tilde h) \natural^g (h) T_g^{-1} & = 
\natural^{h g h^{-1}} (\tilde h) T_h^{-1} T_{h g h^{-1}}^{-1} T_h \\
& = T_h^{-1} \natural^{h g h^{-1}} (\tilde h) T_{h g h^{-1}}^{-1} T_h \\
& = T_h^{-1} T_{\tilde h}^{-1} T_{\tilde h h g h^{-1} \tilde h^{-1}}^{-1} T_{\tilde h} T_h \\
& = (\natural (\tilde h,h) T_{h \tilde h})^{-1} T_{\tilde h h g h^{-1} \tilde h^{-1}}^{-1}
(\natural (\tilde h,h) T_{h \tilde h}) \\
& = T_{h \tilde h}^{-1} T_{\tilde h h g h^{-1} \tilde h^{-1}}^{-1} T_{h \tilde h} 
\quad = \quad \natural^g (\tilde h h) T_g^{-1} .
\end{align*}
(b) Assume that $\tilde h \in Z_G (h g h^{-1})$. Applying part (a) twice, we find
\[
\natural^{h g h^{-1}} (\tilde h) \natural^g (h) = \natural^g (\tilde h h) =
\natural^g (h) \natural^g (h^{-1} \tilde h h) .
\]
Hence $\natural^g (h)$ intertwines the $Z_G (h g h^{-1})$-representations
$h \cdot \natural^g$ and $\natural^{h g h^{-1}}$.
\end{proof}

Lemma \ref{lem:2.10} provides a canonical bijection
\begin{equation}\label{eq:2.32}
\Omega^n (h^{-1}) \otimes \natural^g (h) : \Omega^n ( V^g ) \otimes \natural^g 
\to \Omega^n ( V^{h g h^{-1}}) \otimes \natural^{h g h^{-1}} , 
\end{equation}
which intertwines the $Z_G (g)$-actions (where $Z_G (g)$ acts on the right hand side
via precomposing with conjugation by $h$). Regarding \eqref{eq:2.32} as an action of 
$h \in G$ on the sum of these spaces, we can reformulate Theorem \ref{thm:2.1} as
\begin{equation}\label{eq:2.33}
HH_n \big( \mc O (V) \rtimes \C [G,\natural] \big) \cong \bigoplus_{g \in \langle G \rangle}
\big( \Omega^n (V^g) \otimes \natural^g \big)^{Z_G (g)} 
\cong \Big( \bigoplus_{g \in G} \Omega^n (V^g) \otimes \natural^g \Big)^G .
\end{equation}
Consider any
\[
\omega = \sum\nolimits_{g \in G} T_g \omega_g \in 
\Big( \bigoplus\nolimits_{g \in G} \Omega^n (V^g) \otimes \natural^g \Big)^G .
\]
By construction
\begin{equation}\label{eq:2.34}
T_{h g h^{-1}} \omega_{h g h^{-1}} = T_h T_g \omega_g T_h^{-1} =
T_h T_g T_h^{-1} h \cdot \omega_g = \natural^g (h) T_{h g h^{-1}} h \cdot \omega_g .
\end{equation}
We deduce that $\omega_{h g h^{-1}} = \natural^g (h) h \cdot \omega_g$.

Next we relate Theorem \ref{thm:2.1} to $\Irr \big( \mc O (V) \rtimes \C [G,\natural] \big)$. 
For $g \in G$ and $v \in V^g$ with $\natural^g |_{G_v \cap Z_G (g)} = 1$,
we define $\nu_{g,v} \in HH_0 (\mc O (V) \rtimes \C [G,\natural])^*$ as evaluation at 
$(g,v)$ in the expression 
\begin{equation}\label{eq:2.11}
HH_0 ( \mc O (V) \rtimes \C [G,\natural] ) \cong 
\big( \bigoplus\nolimits_{g \in G} \mc O (V^g) \otimes \natural^g \big)^G
\end{equation}
from Theorem \eqref{eq:2.33}. By \eqref{eq:2.34}, for any $h \in G$:
\begin{equation}\label{eq:2.16}
\nu_{h g h^{-1},hv}(\omega) = \nu_{h g h^{-1},hv}(\natural^g (h) \, h \cdot \omega) =
\natural^g (h) \nu_{g,v}(\omega) .
\end{equation}
Hence $\nu_{hgh^{-1},hv} = \natural^g (h) \nu_{g,v}$. From \eqref{eq:2.23} we also see how 
$\nu_{g,v}$ becomes a map $\mc O (V) \rtimes \C [G,\natural] \to \C$ supported on $T_g \mc O (V)$. 

\begin{lem}\label{lem:2.4}
The following numbers are equal:
\begin{itemize}
\item[(i)] the number of inequivalent irreducible representations of $\mc O (V) \rtimes
\C [G,\natural]$ with $\mc O (V)^G$-character $Gv$,
\item[(ii)] $| \{ g \in \langle G_v \rangle : \natural^g |_{G_v \cap Z_G (g)} = 1 \}|$,
where $\langle G_v \rangle$ is a set of representatives for the conjugacy classes in $G_v$,
\item[(iii)] the dimension of the specialization of $HH_0 ( \mc O (V) \rtimes
\C [G,\natural] )$ at $Gv$,
\item[(iv)] $|\{ \nu_{g,v'} : g \in \langle G \rangle, v' \in (V^g \cap Gv ) / Z_G (g),
\natural^g |_{G_v \cap Z_G (g)} = 1 \} |$. 
\end{itemize}
In (iv) $v' \in (V^g \cap Gv ) / Z_G (g)$ means that from every $Z_G (g)$-orbit we pick
one element in $V^g \cap Gv$.
\end{lem}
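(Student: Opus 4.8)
The strategy is to show that all four numbers equal a common fifth quantity by reducing to the untwisted, affine‑variety‑free situation already treated in Lemma \ref{lem:2.3}, and then to transport that count through the parametrization \eqref{eq:4} of $\Irr(\mc O(V) \rtimes \C[G,\natural])$ and the description \eqref{eq:2.33} of $HH_0$. First I would fix $v \in V$ and note that the irreducible representations of $\mc O(V) \rtimes \C[G,\natural]$ with $\mc O(V)^G$-character $Gv$ are, by the analogue of \eqref{eq:2} discussed around \eqref{eq:4}, in bijection with $\Irr(\C[G_v,\natural])$ (the stabilizer $G_v$ inherits a restricted 2-cocycle, still denoted $\natural$). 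So quantity (i) equals $|\Irr(\C[G_v,\natural])|$. Applying Lemma \ref{lem:2.3}(b) to the finite group $G_v$ and the cocycle $\natural|_{G_v}$ gives $|\Irr(\C[G_v,\natural])| = |\{g \in \langle G_v\rangle : (\natural|_{G_v})^g = 1\}|$, and by \eqref{eq:2.31} the character $(\natural|_{G_v})^g$ on $Z_{G_v}(g) = G_v \cap Z_G(g)$ is exactly the restriction $\natural^g|_{G_v \cap Z_G(g)}$. That identifies (i) with (ii).

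Next I would relate (ii) and (iii). Specializing the isomorphism \eqref{eq:2.11} of Theorem \ref{thm:2.1} at the point $Gv \in V /\!/ G$ amounts to replacing each $\Omega^0(V^g) = \mc O(V^g)$ by the finite-dimensional space of functions supported at the finite set $V^g \cap Gv$, and then taking $G$-invariants of $\bigoplus_g \big(\text{functions on } V^g \cap Gv\big) \otimes \natural^g$. Using \eqref{eq:2.34}, which says $\omega_{hgh^{-1}} = \natural^g(h)\, h\cdot\omega_g$, the $G$-action shuffles the summands over a conjugacy class while twisting the fibre by $\natural^g$; a standard orbit/stabilizer bookkeeping shows the invariants have a basis indexed by pairs $(g, v')$ where $g$ runs over $\langle G\rangle$ with $V^g \cap Gv \neq \emptyset$, $v'$ runs over $(V^g \cap Gv)/Z_G(g)$, and the fibre at such a point survives the averaging precisely when $\natural^g|_{G_{v'} \cap Z_G(g)} = 1$. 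This simultaneously identifies (iii) with the count in (iv), since $\nu_{g,v'}$ is by definition evaluation at $(g,v')$ in \eqref{eq:2.11}, and the relation $\nu_{hgh^{-1},hv'} = \natural^g(h)\nu_{g,v'}$ from \eqref{eq:2.16} shows that distinct basis vectors correspond exactly to distinct $Z_G(g)$-orbits of $v'$ in $V^g \cap Gv$ for each conjugacy class of $g$; hence (iii) $=$ (iv).

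It remains to see (ii) $=$ (iv). After the analysis above, (iv) is the number of pairs $(g,v')$ with $g \in \langle G\rangle$, $v' \in (V^g \cap Gv)/Z_G(g)$ and $\natural^g|_{G_{v'} \cap Z_G(g)} = 1$; since every $v' \in Gv$ has stabilizer $G$-conjugate to $G_v$, after translating $v'$ back to $v$ this becomes a count of $G$-conjugacy classes of elements $g$ lying in some $V^{\,g'}$-fixed-point constraint, which I would match with the count of $G_v$-conjugacy classes of $g \in G_v$ satisfying $\natural^g|_{G_v \cap Z_G(g)} = 1$ by the orbit-counting argument for the $G$-action on $\{(g,v') : v' \in V^g \cap Gv\}$: its orbits are in bijection with $G_v$-conjugacy classes in $\{g \in G_v : V^g \ni v\} = G_v$. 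The main obstacle I expect is the careful bookkeeping in the second paragraph: getting the twist by $\natural^g$ to interact correctly with taking $G$-invariants over a full conjugacy class, in particular verifying that the averaging operator over the stabilizer $Z_G(g)$ of a representative is nonzero on the $\natural^g$-isotypic fibre exactly under the stated condition $\natural^g|_{G_{v'} \cap Z_G(g)} = 1$ — this is where \eqref{eq:2.10} and the idempotent $p_\natural$ must be tracked through the specialization, and where an off-by-a-conjugation error would be easy to make.
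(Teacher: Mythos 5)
Your proposal is correct and follows essentially the same route as the paper: (i)$=$(ii) via the Mackey/Clifford bijection with $\Irr(\C[G_v,\natural])$ combined with Lemma \ref{lem:2.3}, and (ii)$=$(iii)$=$(iv) by specializing the $HH_0$ description from Theorem \ref{thm:2.1} at $Gv$ and doing the orbit/stabilizer bookkeeping, which the paper packages as the single isomorphism $\big(\bigoplus_{g \in G} C(V^g \cap Gv) \otimes \natural^g\big)^G \cong \big(\bigoplus_{g \in G_v} C(\{v\}) \otimes \natural^g\big)^{G_v}$. Your slightly more roundabout handling of the last equality (proving (iii)$=$(iv) and (ii)$=$(iv) separately) amounts to the same orbit-counting argument.
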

\begin{proof}
By Mackey theory there is a bijection from $\Irr (\C [G_v,\natural])$ to the set in (i), namely
\begin{equation}\label{eq:2.24}
\rho \mapsto \mr{ind}_{\mc O (V) \rtimes \C [G,\natural]}^{\mc O (V) \rtimes
\C [G_v,\natural]} (\C_v \otimes \rho) .
\end{equation}
By Lemma \ref{lem:2.3}, $|\Irr (\C [G_v,\natural])|$ equals (ii).

From Theorem \ref{thm:2.1} we see that specializing $HH_0 ( \mc O (V) \rtimes
\C [G,\natural] )$ at $Gv$ yields
\[
\Big( \bigoplus\nolimits_{g \in G} C (V^g \cap Gv) \otimes \natural^g \Big)^G \cong
\Big( \bigoplus\nolimits_{g \in G_v} C(\{v\}) \otimes \natural^g \Big)^{G_v} .
\]
The dimension of the right hand side is (ii) and the dimension of the left hand side
equals (iv).
\end{proof}

As a consequence of Lemma \ref{lem:2.4}, we record that there is a bijection 
\begin{equation}\label{eq:2.12}
\Irr (\mc O (V) \rtimes \C [G,\natural]) \; \longleftrightarrow \;
\{ \nu_{g,v} :  g \in \langle G \rangle, v \in V^g / Z_G (g), 
\natural^g |_{G_v \cap Z_G (g)} = 1\} ,
\end{equation}
which preserves the underlying $G$-orbits in $V$. 

\begin{ex}\label{ex:2.A}
We illustrate the constructions in this section with an example that exhibits some 
non-standard behaviour. Let $Q_8$ be the quaternion group, with centre $Z(Q_8) = \{ 1, -1\}$.
Let $G$ be the quotient 
\[
Q_8 / Z(Q_8) = \{ \pm \mb{1}, \pm \mb{i}, \pm \mb{j}, \pm \mb{k}\} .
\]
For a nontrivial 2-cocycle on $G$, let $\chi_\natural$ be the nontrivial character of
$Z(Q_8)$ and define $\C [G,\natural] = p_\natural \C [Q_8]$.
From calculations like $T_{\pm \mb j} T_{\pm \mb i} T_{\pm \mb j}^{-1} = - T_{\pm \mb i}$ we obtain
\[
\natural^{\pm \mb i}(g) = \left\{
\begin{array}{cc}
1 & g \in \langle \pm \mb{i} \rangle \\
-1 & g \notin \langle \pm \mb{i} \rangle 
\end{array}
\right. .
\]
Similarly $\natural^{\pm \mb j}$ and $\natural^{\pm \mb k}$ are nontrivial characters
of $G$, while $\natural^{\pm \mb 1} = \mr{triv}_G$.

The group $G$ acts on $V = \C^2$ by 
\[
\pm \mb{i} \cdot (z_1,z_2) = (-z_1,z_2) ,\quad
\pm \mb{j} \cdot (z_1,z_2) = (z_1,-z_2) ,\quad
\pm \mb{k} \cdot (z_1,z_2) = (-z_1,-z_2) .
\]
From Lemma \ref{lem:2.4} we can compute the number of irreducible representations of
$\mc O (V) \rtimes \C [G,\natural]$ with a fixed $\mc O (V)^G$-character:
\[
\begin{array}{c|c|c|c|c}
(z_1,z_2) \in \C^2 / G & z_1 \neq 0 \neq z_2 & z_1 = 0 \neq z_2 & z_1 \neq 0 = z_2 & z_1 = 0 = z_2 \\
\hline
\# \text{ irreps} & 1 & 2 & 2 & 1 
\end{array}
\]
We work out the description of $HH_n (\mc O (V) \rtimes \C [G,\natural])$ from Theorem \ref{thm:2.1}:
\begin{itemize}
\item $g = \pm \mb{1}$: $V^g = V, (\Omega^n (V^g) \otimes \natural^g)^{Z_G (g)} = \Omega^n (V)^G$,
\item $g = \pm \mb{i}$: $V^g = \{0\} \times \C, (\Omega^0 (V^g) \otimes \natural^g)^{Z_G (g)} =
\{ f \in \mc O (V^g) : f(-x) = -f(x) \}$, 
$(\Omega^1 (V^g) \otimes \natural^g)^{Z_G (g)} =
\{ f \textup{d} z_2 \in \Omega^1 (V^g) : f(-x) = f(x) \}$,
\item $g = \pm \mb{j}$: $V^g = \C \times \{0\}, (\Omega^0 (V^g) \otimes \natural^g)^{Z_G (g)} =
\{ f \in \mc O (V^g) : f(-x) = -f(x) \}$, 
$(\Omega^1 (V^g) \otimes \natural^g)^{Z_G (g)} =
\{ f \textup{d} z_1 \in \Omega^1 (V^g) : f(-x) = f(x) \}$,
\item $g = \pm \mb{k}$: $V^g = \{(0,0)\}$, $(\Omega^0 (V^g) \otimes \natural^g)^{Z_G (g)} = 0$.
\end{itemize}
\end{ex}

\subsection{Algebraic families of representations} \ 
\label{par:famCrossed}

Let $A$ be a $\C$-algebra and let $\mr{Mod}_f (A)$ be the category of finite length $A$-modules.
Let $R(A)$ be the Grothendieck group of $\mr{Mod}_f (A)$. Assume that, for every $a \in A$ and
every $\pi \in \mr{Mod}_f (A)$, $\mr{tr}\, \pi (a)$ is well-defined. This holds for instance 
if all finite length $A$-representations have finite dimension. That is the case for all algebras 
that we study in this paper, because they have finite rank as modules over their centre.
Under this condition, there is a natural pairing
\begin{equation}\label{eq:2.42}
\begin{array}{ccc}
HH_0 (A) \times \mr{Mod}_f (A) & \to & \C \\
(a,\pi) & \mapsto & \mr{tr} \, \pi (a)
\end{array},
\end{equation}
which induces a $\C$-bilinear map
\[
HH_0 (A) \times \C \otimes_\Z R(A) \to \C .
\]
These can also be interpreted as natural linear maps
\begin{align}
\label{eq:2.14} & R(A) \to \C \otimes_\Z R(A) \to HH_0 (A)^* ,\\
\label{eq:2.43} & HH_0 (A) \to (\C \otimes_\Z R(A))^* .
\end{align}
By the linear independence of irreducible characters \eqref{eq:2.14} is injective, so this 
identifies $R(A)$ and $\C \otimes_\Z R(A)$ with subgroups of $HH_0 (A)^*$. 

By an algebraic family $\mf F$ of $A$-representations over a complex affine  variety $Y$ we mean 
a family of $A$-representations $\mf F_y \; (y \in Y)$, all on the same finite dimensional vector 
space $W$, which together give an algebra homomorphism
\[
\begin{array}{cccc}
\mc F_Y : & A & \to & \mc O (Y) \otimes \End_\C (W) \\
& a & \mapsto & [ y \mapsto \mf F_y (a) ] 
\end{array}.
\]
For any $a \in A$, the map
\[
Y \to \C : y \mapsto \mr{tr}\, \mf F_y (a)
\]
is a regular function. We call a linear function $f$ on $\C \otimes_\Z R(A)$
regular if for every algebraic family of $A$-representations $\mf F_Y$, 
\[
\text{the function } Y \to \C : y \mapsto f (\mf F_y) \text{ is regular.}
\]
Then the image of $HH_0 (A)$ under \eqref{eq:2.43} is contained in 
\begin{equation}\label{eq:2.44}
(\C \otimes_\Z R(A))^*_{\mr{reg}} = \{ f \in (\C \otimes_\Z R(A))^* : f \text{ is regular} \} .
\end{equation}
Assume that $Y$ is nonsingular. By the Hochschild--Kostant--Rosenberg theorem and Morita 
invariance, the Hochschild homology of $\mc O (Y) \otimes \End_\C (W)$ is $\Omega^* (Y)$. 
We recall from \cite[\S 1.2]{Lod} that the isomorphism
\[
HH_n \big( \mc O (Y) \otimes \mr{End}_\C (W) \big) \to HH_n (\mc O (Y))
\]
can be implemented by the generalized trace map
\[
\begin{array}{cccc}
\mr{gtr} : & C_n (\mc O (Y) \otimes \End_\C (W)) & \to & C_n (\mc O (Y)) \\
& f_0 m_0 \otimes f_1 m_1 \otimes \cdots \otimes f_n m_n & \mapsto &
\mr{tr}(m_0 m_1 \cdots m_n) f_0 \otimes f_1 \otimes \cdots \otimes f_n
\end{array},
\]
where $f_i \in \mc O (Y)$ and $m_i \in \mr{End}_\C (W)$.

\begin{lem}\label{lem:2.2}
\enuma{
\item For $y \in Y$, the map $\mr{ev}_y \circ \mr{gtr} \circ C_* (\mc F_Y)$ depends only on
the semisimplification of the $A$-representation $\mf F_y$. 
\item The maps $\mr{gtr} \circ 
C_* (\mc F_Y)$ and 
\[
HH_n (\mc F_Y) : HH_n (A) \to \Omega^n (Y)
\] 
depend only on the image of the family $\mf F$ in $R (A)$.
}
\end{lem}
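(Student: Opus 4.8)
The plan is to exploit the fact that the generalized trace map is built entirely from taking traces of products of endomorphisms, so it is manifestly insensitive to anything finer than the character of the representation. For part (a), I would argue as follows. Fix $y \in Y$. The composite $\mr{ev}_y \circ \mr{gtr} \circ C_*(\mc F_Y)$ sends a chain $a_0 \otimes \cdots \otimes a_n \in C_n(A)$ to $\mr{tr}\big( \mf F_y(a_0) \mf F_y(a_1) \cdots \mf F_y(a_n) \big) \, 1 \otimes \cdots \otimes 1$, where I have unwound $\mc F_Y$ followed by evaluation at $y$ (so $\mc O(Y)$ collapses to $\C$ and only the $\End_\C(W)$-part survives in the trace). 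Since $a_0 a_1 \cdots a_n \in A$ and $\mf F_y$ is an algebra homomorphism to $\End_\C(W)$, this equals $\mr{tr}\, \mf F_y(a_0 a_1 \cdots a_n)$, i.e. the value of the character $\chi_{\mf F_y}$ of the representation $\mf F_y$ at the element $a_0 a_1 \cdots a_n$. The character of a finite-dimensional representation equals the character of its semisimplification, because passing to a composition series is additive on characters and the character of an extension is the sum of the characters of sub and quotient. Hence $\mr{ev}_y \circ \mr{gtr} \circ C_*(\mc F_Y)$ is determined by $[\mf F_y] \in R(A)$, which proves (a).

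For part (b), the first assertion about $\mr{gtr} \circ C_*(\mc F_Y)$ follows by letting $y$ vary: a chain $a_0 \otimes \cdots \otimes a_n$ is sent by $\mr{gtr} \circ C_*(\mc F_Y)$ to the chain $\big[ y \mapsto \mr{tr}\, \mf F_y(a_0 a_1 \cdots a_n)\big] \otimes 1 \otimes \cdots \otimes 1 \in C_n(\mc O(Y))$ (writing the $\mc O(Y)$-part of each slot and using that $\mc F_Y$ is an $\mc O(Y)$-algebra map so the functions multiply), and by part (a) applied pointwise this regular function depends only on the family $y \mapsto [\mf F_y]$ in $R(A)$; since two families of $A$-representations over $Y$ define the same element of $R(A)$ precisely when their semisimplifications agree at every point (linear independence of irreducible characters, as used around \eqref{eq:2.14}), the chain-level map depends only on the image of $\mf F$ in $R(A)$. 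The statement for $HH_n(\mc F_Y)$ is then immediate: by the HKR theorem together with Morita invariance as recalled just before the lemma, $HH_n(\mc F_Y)$ is obtained from $\mr{gtr} \circ C_*(\mc F_Y)$ by passing to homology and applying the (fixed, natural) identifications $HH_n(\mc O(Y) \otimes \End_\C(W)) \cong HH_n(\mc O(Y)) \cong \Omega^n(Y)$, so it inherits the same dependence.

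The only point that needs a little care — and what I would flag as the main obstacle, though it is mild — is the bookkeeping in part (b) showing that the $\mc O(Y)$-valued coefficients genuinely assemble into a regular function and that the identification of $\mr{gtr} \circ C_*(\mc F_Y)$ with $b$-cycle degree works compatibly with the differential; but this is exactly the content of the generalized trace computation recalled from \cite[\S 1.2]{Lod} just above the lemma, which commutes with $b_*$, so no new work is required. In short, the lemma is a formal consequence of: (1) $\mr{gtr}$ factors through the map $a_0 \otimes \cdots \otimes a_n \mapsto \mr{tr}\, \mf F(a_0 \cdots a_n)$, (2) characters see only semisimplifications, and (3) functoriality of Hochschild homology together with the fixed HKR/Morita identifications.
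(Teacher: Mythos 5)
Your part (a) is correct and is essentially the paper's argument: $\mr{ev}_y \circ \mr{gtr} \circ C_n (\mc F_Y)$ sends $a_0 \otimes \cdots \otimes a_n$ to $\mr{tr}\, \mf F_y (a_0 a_1 \cdots a_n)$, and the character of a finite dimensional representation only sees the semisimplification (the paper proves this by the composition-series argument: a product of filtration-preserving operators containing one strictly filtration-lowering factor is traceless).

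Part (b), however, rests on a false chain-level formula. The generalized trace map does not multiply the $\mc O (Y)$-components into the zeroth slot: by definition $\mr{gtr}(f_0 m_0 \otimes \cdots \otimes f_n m_n) = \mr{tr}(m_0 \cdots m_n)\, f_0 \otimes \cdots \otimes f_n$ keeps the functions in separate tensor factors. Your claim that $\mr{gtr} \circ C_n (\mc F_Y)$ sends $a_0 \otimes \cdots \otimes a_n$ to $\big[ y \mapsto \mr{tr}\, \mf F_y (a_0 \cdots a_n) \big] \otimes 1 \otimes \cdots \otimes 1$ cannot be correct: applying $\Omega$ to any chain of the form $f \otimes 1 \otimes \cdots \otimes 1$ gives $f\, \textup{d}1 \cdots \textup{d}1 = 0$, so your formula would force $HH_n (\mc F_Y) = 0$ for all $n \geq 1$; this already fails for $A = \mc O (Y)$ with $\mf F_y = \C_y$, where $\mc F_Y$ is the identity and $HH_n (\mc F_Y)$ is the identity of $\Omega^n (Y)$. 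The correct description of $\mr{gtr} \circ C_n (\mc F_Y)(a_0 \otimes \cdots \otimes a_n)$, viewed as an element of $C_n (\mc O (Y)) \cong \mc O (Y^{n+1})$, is the multi-point trace $(y_0, \ldots, y_n) \mapsto \mr{tr}\big( \mf F_{y_0}(a_0)\, \mf F_{y_1}(a_1) \cdots \mf F_{y_n}(a_n) \big)$. Part (a) controls only its restriction to the diagonal $y_0 = \cdots = y_n$, and off the diagonal it is genuinely not determined by the pointwise classes $[\mf F_y] \in R(A)$: replacing $\mf F_y$ by $\mr{Ad}(g(y)) \circ \mf F_y$ for an algebraic family $g : Y \to GL(W)$ changes none of the $[\mf F_y]$ but in general changes $\mr{tr}\big( \mf F_{y_0}(a_0) \mf F_{y_1}(a_1) \big)$ for $y_0 \neq y_1$. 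So "apply (a) pointwise'' does not give (b). What is actually needed is the observation that the HKR map $\Omega$ only depends on the behaviour of this function on $Y^{n+1}$ to first order along the diagonal, which is the content hiding behind the paper's terse "realize $HH_n (\mc F_Y)$ as $\Omega \circ \mr{gtr} \circ C_* (\mc F_Y)$ and combine with the first claim''; that step is precisely what your argument skips.
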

\begin{proof}
(a) Let $0 = W_0 \subset W_1 \subset \cdots \subset W_k = W$ be a composition series of the 
$A$-representation $\mf F_y$. If $m \in \mf F_y (A)$ maps every $W_j$ to $W_{j-1}$, then 
tr$(m m_1 \cdots m_n) = 0$ for all $m_i \in \mf F_y (A)$.
Hence $\mr{ev}_y \circ \mr{gtr} \circ C_* (\mc F_Y)$ factors through
\[
C_* \big( \bigoplus\nolimits_j \mr{End}_\C (W_j / W_{j-1})) \big),
\] 
and can be computed from the semisimplification $\bigoplus_j \mr{End}_\C (W_j / W_{j-1})$ of 
$(\mf F_y,W)$. \\
(b) We recall that the HKR isomorphism is realized by the map $\Omega$ from 
\eqref{eq:2.22}. Hence $HH_n (\mc F_Y)$ with target $\Omega^n (Y)$ can be realized as 
$\Omega \circ \mr{gtr} \circ C_* (\mc F_Y)$. Combine that with the first claim.
\end{proof}

We will often use a generalization of of Lemma \ref{lem:2.2} to virtual representations:

\begin{lem}\label{lem:2.12}
Let $\mf F_i$ be a finite collection of algebraic families of $A$-representations over $Y$.
For any $\lambda_i \in \C$ there is a well-defined map
\[
\sum\nolimits_i \lambda_i HH_n (\mc F_i) : HH_n (A) \to \Omega^n (Y) .
\]
If $\mf F'_j, \lambda_j$ are data of the same kind and 
\[
\sum\nolimits_i \lambda_i \mf F_{i,y} = \sum\nolimits_j \lambda'_j \mf F'_{j,y}
\quad \text{ in } \C \otimes_\Z R (A), \text{ for all } y \in Y,
\]
then $\sum_i \lambda_i HH_n (\mc F_i) = \sum_j \lambda_j HH_n (\mc F'_j)$.
\end{lem}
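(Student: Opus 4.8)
The plan is to reduce Lemma~\ref{lem:2.12} to Lemma~\ref{lem:2.2} by assembling the finite collection $\{\mf F_i\}$ into a single algebraic family. First I would form the direct sum family $\mf F = \bigoplus_i \mf F_i$ over $Y$, realized on $W = \bigoplus_i W_i$; its structure homomorphism $\mc F_Y : A \to \mc O(Y) \otimes \End_\C(W)$ is the block-diagonal sum of the $\mc F_{i,Y}$. Since $\mr{gtr}$ is additive over orthogonal blocks of $\End_\C(W)$ (the trace of a block-diagonal product is the sum of the traces of the blocks), we have $\mr{gtr} \circ C_*(\mc F_Y) = \sum_i \mr{gtr} \circ C_*(\mc F_{i,Y})$ on chains, hence $HH_n(\mc F_Y) = \sum_i HH_n(\mc F_{i,Y})$ after applying $\Omega$. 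This already shows that \emph{each} integer-coefficient combination is well-defined; to get arbitrary $\lambda_i \in \C$, I would simply extend $\C$-linearly: the map $\sum_i \lambda_i HH_n(\mc F_i)$ is \emph{defined} to be the $\C$-linear combination of the individual maps $HH_n(\mc F_i)$, each of which is already a genuine map $HH_n(A) \to \Omega^n(Y)$ by Lemma~\ref{lem:2.2}. So part one is essentially a definition plus the remark that it is consistent with the geometric picture when the $\lambda_i$ are integers.

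The substance is the second assertion: invariance under the relation in $\C \otimes_\Z R(A)$. The key point is that, by Lemma~\ref{lem:2.2}(a), the chain map $\mr{ev}_y \circ \mr{gtr} \circ C_*(\mc F_{i,Y})$ depends only on the class $[\mf F_{i,y}] \in R(A)$, indeed only on the semisimplification of $\mf F_{i,y}$. Therefore, for each fixed $y \in Y$, the hypothesis $\sum_i \lambda_i \mf F_{i,y} = \sum_j \lambda'_j \mf F'_{j,y}$ in $\C \otimes_\Z R(A)$ forces $\sum_i \lambda_i \big(\mr{ev}_y \circ \mr{gtr} \circ C_*(\mc F_{i,Y})\big) = \sum_j \lambda'_j \big(\mr{ev}_y \circ \mr{gtr} \circ C_*(\mc F'_{j,Y})\big)$ as maps $C_*(A) \to C_*(\C)$. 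One has to be slightly careful here: Lemma~\ref{lem:2.2}(a) literally says the map depends only on the \emph{representation up to semisimplification}, so I would first note that the assignment $\pi \mapsto \big(\mr{ev}_y \circ \mr{gtr} \circ C_*(\text{the corresponding one-point family})\big)$ is additive in short exact sequences — again because the generalized trace vanishes on strictly upper-triangular endomorphisms — hence descends to a group homomorphism $R(A) \to \Hom(C_*(A), C_*(\C))$, which then extends $\C$-linearly to $\C \otimes_\Z R(A)$. Evaluating this homomorphism on the two equal elements of $\C \otimes_\Z R(A)$ gives equality of the corresponding chain maps at $y$.

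Having equality at every $y \in Y$, I would then upgrade it to equality of the $\Omega^n(Y)$-valued maps. Since $\Omega^n(Y) \hookrightarrow \prod_{y \in Y} \Omega^n(Y)|_y$ is \emph{not} injective in general (differential forms are not determined by their values at points), the cleanest route is instead to observe that the two sides, $\sum_i \lambda_i\, \mr{gtr} \circ C_*(\mc F_{i,Y})$ and $\sum_j \lambda'_j\, \mr{gtr} \circ C_*(\mc F'_{j,Y})$, are both maps $C_*(A) \to C_*(\mc O(Y))$, and a chain in $C_n(\mc O(Y))$ — a finite sum of tensors $f_0 \otimes \cdots \otimes f_n$ with $f_k \in \mc O(Y)$ — \emph{is} determined by the collection of its pointwise evaluations, because $\mc O(Y)$ is a reduced ring of functions and hence $\mc O(Y) \hookrightarrow \prod_{y} \C$, so $C_n(\mc O(Y)) = \mc O(Y)^{\otimes(n+1)} \hookrightarrow \prod_y \C^{\otimes(n+1)}$. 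Thus the two chain maps agree on the nose, and applying $\Omega$ and passing to homology gives $\sum_i \lambda_i HH_n(\mc F_i) = \sum_j \lambda'_j HH_n(\mc F'_j)$ as maps $HH_n(A) \to \Omega^n(Y)$. The main obstacle, such as it is, is precisely this last subtlety — being careful that although forms are not point-determined, the \emph{chains} computing them (built from regular functions) are, so that the genuinely pointwise input from Lemma~\ref{lem:2.2}(a) suffices; everything else is bookkeeping with block-diagonal families and additivity of the generalized trace.
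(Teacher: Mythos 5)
Your block-diagonal assembly and your derivation of the \emph{pointwise} identity are fine: Lemma \ref{lem:2.2}.a does give, for each fixed $y$, a well-defined $\C$-linear map $\C \otimes_\Z R(A) \to \Hom (C_*(A),\C)$, so the two linear combinations agree after composing with $\mr{ev}_y$, for every $y \in Y$. The gap is in the final upgrade. The map $C_n (\mc O (Y)) = \mc O (Y)^{\otimes (n+1)} \to \prod_y \C$ sending $f_0 \otimes \cdots \otimes f_n$ to its diagonal evaluations $\big( f_0 (y) f_1 (y) \cdots f_n (y) \big)_y$ is \emph{not} injective: for $Y = \C$ with coordinate $x$, the chain $x \otimes 1 - 1 \otimes x$ evaluates to $0$ at every point but is nonzero. (The injection you actually write down, into $\big( \prod_y \C \big)^{\otimes (n+1)}$, would require evaluations at arbitrary tuples $(y_0,\ldots,y_n)$ of \emph{distinct} points; Lemma \ref{lem:2.2}.a only gives you the diagonal $y_0 = \cdots = y_n$.) Worse, even the composite with $\Omega$ is not determined by diagonal evaluations: $x \otimes x - 1 \otimes x^2$ evaluates to $0$ at every $y$, yet $\Omega (x \otimes x - 1 \otimes x^2) = x \,\textup{d}x - 2x\, \textup{d}x = -x\, \textup{d}x \neq 0$. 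So equality at every point does not yield equality of the induced maps $HH_n (A) \to \Omega^n (Y)$, which is exactly what you need.

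The paper avoids this by never leaving the realm of honest families. The $\lambda_i$ and $\lambda'_j$ generate a finitely generated, hence free, subgroup $\bigoplus_{b \in B} \Z b$ of $\C$; writing $\lambda_i = \sum_b \lambda_{i,b} b$ and using the $\Z$-linear independence of $B$, the hypothesis splits into separate relations $\sum_i \lambda_{i,b} \mf F_{i,y} = \sum_j \lambda'_{j,b} \mf F'_{j,y}$ in $R(A)$ with integer coefficients, one for each $b$. Moving the negatively weighted terms to the other side turns each such relation into an equality in $R(A)$ of two genuine direct-sum families, and then Lemma \ref{lem:2.2}.b (not just part (a)) applies directly. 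You should replace your last paragraph with this reduction; your block-diagonal construction and the additivity of $\mr{gtr}$ then serve precisely to identify $HH_n$ of the assembled direct-sum families with the corresponding integer combinations of the $HH_n (\mc F_i)$.
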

\begin{proof}
All the $\lambda_i$ and the $\lambda'_j$ live in one finitely generated subgroup of $\C$,
which we can express as $\bigoplus_{b \in B} \Z b$. Accordingly we write
\[
\lambda_i = \sum\nolimits_{b \in B} \lambda_{i,b} b ,\; \lambda'_j = \sum\nolimits_{b \in B} 
\lambda'_{j,b} b \text{ with } \lambda_{i,b}, \lambda'_{j,b} \in \Z.
\]
Now we have to show that 
\begin{equation}\label{eq:2.41}
\sum\nolimits_i \lambda_{i,b} HH_n (\mc F_i) = \sum\nolimits_j \lambda_{j,b} HH_n (\mc F'_j)
\end{equation}
for every $b \in B$. We note that, by the $\Z$-linear independence of $B$:
\begin{equation}\label{eq:2.19}
\sum\nolimits_i \lambda_{i,b} \mf F_{i,y} = \sum\nolimits_j \lambda'_{j,b} \mf F'_{j,y} \in R (A) .
\end{equation}
Bringing some indices to the other side in \eqref{eq:2.19}, we can arrange that all
the $\lambda_{i,b}$ and all the $\lambda'_{j,b}$ lie in $\Z_{\geq 0}$. Then \eqref{eq:2.41}
can be rewritten as
\[
HH_n \big( \bigoplus\nolimits_i \mf F_i^{\oplus \lambda_{i,b}} \big) = 
HH_n \big( \bigoplus\nolimits_j {\mc F'}_j^{\oplus \lambda_{j,b}} \big) .
\]
This is an instance of Lemma \ref{lem:2.2}.b.
\end{proof}

One can interpret Lemma \ref{lem:2.12} as: every algebraic family over $Y$ in $\C \otimes_\Z R (A)$
gives rise to a well-defined map on Hochschild homology.\\

We would like to realize the isomorphism from Theorem \ref{thm:2.1} with families of 
representations. To define the desired families of representations, we specialize to a setup
similar to root data. From now on $V$ will be finite dimensional complex vector space on
which $G$ acts linearly. We assume that we are given a family of ``parabolic" subgroups $G_P$ 
of $G$, indexed by the subsets of some finite set $\Delta$, such that
\begin{itemize}
\item $G_\emptyset = \{1\}$ and $G_\Delta = G$,
\item for every $P \subset \Delta$ there are $G_P$-stable algebraic subgroups $V_P$ and 
$V^P \subset V^{G_P}$, such that $V = V_P \oplus V^P$,
\item for $P \supset Q$: $G_P \supset G_Q, V_P \supset V_Q$ and $V^P \subset V^Q$
\item for any $P \subset \Delta$,
\begin{equation}\label{eq:2.13}
\Q \otimes_\Z \sum\nolimits_{Q \subsetneq P} \mr{ind}_{\mc O (V_P) \rtimes 
\C [G_Q,\natural]}^{\mc O (V_P) \rtimes \C [G_P,\natural]} R (\mc O (V_P) \rtimes \C [G_Q,\natural]) 
\end{equation}
has finite codimension in $\Q \otimes_\Z R (\mc O (V_P) \rtimes \C [G_P,\natural])$.
\end{itemize}
The second bullet entails that $\mc O (V) \cong \mc O (V^P) \otimes \mc O (V_P)$. For a 
representation $\delta$ of $\mc O (V_P) \rtimes \C [G_P,\natural]$ and $v \in V^P$, we define 
a representation $\C_v \otimes \delta$ of 
\[
\mc O (V^P) \otimes \mc O (V_P) \rtimes \C [G_P,\natural] = \mc O (V) \rtimes \C [G_P,\natural]
\quad \text{ by}
\]
\[
f_1 \otimes f_2 \otimes T_g \mapsto f_1 (v) \delta (f_2 \otimes T_g) \qquad
f_1 \in \mc O (V^P), f_2 \in \mc O (V_P), g \in G_P .
\]

\begin{defn}\label{def:2}
We call a finite dimensional representation $\delta$ of $\mc O (V_P) \rtimes \C [G_P,\natural]$ 
elliptic if it admits a $\mc O (V_P)^{G_P}$-character and does not belong to \eqref{eq:2.13}. 
For such $(P,\delta)$, the family of representations
\[
\pi (P,\delta,v) := \mr{ind}_{\mc O (V) \rtimes \C [G_P,\natural]}^{\mc O (V) \rtimes 
\C [G,\natural]} (\C_v \otimes \delta)  \qquad v \in V^P
\]
is called the algebraic family $\mf F (P,\delta)$. Its dimension is the dimension of $V^P$.
\end{defn}

The ellipticity condition implies that an algebraic family of this kind can not be extended
to a larger parameter space $V^{P'}$.
The construction of irreducible $\mc O (V_P) \rtimes \C [G_P,\natural]$-representations in 
\eqref{eq:2.24} shows that the $\mc O (V_P)^{G_P}$-character of an elliptic $\delta$ is just 
$0 \in V_P / G_P$. By the assumption on our parabolic subalgebras, there exist only finitely many 
such algebraic families with $\delta$ irreducible. In the remainder of this paragraph we abbreviate
\[
A = \mc O (V) \rtimes \C [G,\natural] .
\]

\begin{lem}\label{lem:2.5}
Let $\{ \mf F (P_i,\delta_i) \}_{i=1}^{n_{\mf F}}$ be a set of algebraic families of 
$A$-representations, such that
\[
\{ \pi (P_i,\delta_i,v_i) : v_i \in V^{P_i} , i = 1,\ldots, n_{\mf F} \}
\]
spans $\C \otimes_\Z R (A)$. Let $R^d (A) \subset R(A)$ be the $\Z$-span of the members
of the families $\mf F (P_i,\delta_i)$ of dimension $\geq d$. Then 
\[
\C \otimes_\Z R^d (A) \subset HH_0 (A)^*
\]
has a $\C$-basis
\[
\{ \nu_{g,v} : g \in \langle G \rangle, \dim V^g \geq d, v \in V^g / Z_G (g),
\natural^g |_{G_v \cap Z_G (g)} = 1 \} .
\]
\end{lem}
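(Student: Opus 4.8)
Embed $N:=\C\otimes_\Z R(A)$ in $HH_0(A)^*$ via \eqref{eq:2.14}, and for $e\ge 0$ put $\tilde N_e:=\mr{span}_\C\{\nu_{g,v}:g\in\langle G\rangle,\ \dim V^g=e,\ v\in V^g/Z_G(g),\ \natural^g|_{G_v\cap Z_G(g)}=1\}$, so that the claimed basis spans $N^d:=\bigoplus_{e\ge d}\tilde N_e$. The plan is to establish, in turn, (i) that the displayed set is $\C$-linearly independent and spans $N$; (ii) $\C\otimes_\Z R^d(A)\subseteq N^d$; (iii) $N^d\subseteq\C\otimes_\Z R^d(A)$. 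For (i): by the realization \eqref{eq:2.23}, $\nu_{g,v}$ vanishes on $T_{g'}\mc O(V)$ unless $g'$ is $G$-conjugate to $g$, so any linear relation splits according to $g\in\langle G\rangle$; for fixed $g$, $\nu_{g,v}$ is evaluation at $v$ on $\bigl(\Omega^0(V^g)\otimes\natural^g\bigr)^{Z_G(g)}$, and finitely many distinct such $v$ are separated by a function obtained by averaging a bump function on $V^g$ against $\natural^g$ over $Z_G(g)$; the hypothesis $\natural^g|_{G_v\cap Z_G(g)}=1$ is exactly what makes this average non-vanishing at $v$ (and also shows $\nu_{g,v}\neq0$, so that \eqref{eq:2.12} is meaningful). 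Thus the set is linearly independent, and comparing its cardinality, over each central character $Gv$, with the dimension of the corresponding specialization of $HH_0(A)$ (Lemma \ref{lem:2.4}) shows it spans $N$; in particular $N=\bigoplus_e\tilde N_e$.

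For (ii), the Frobenius formula for the character of $\mr{ind}_{\mc O(V)\rtimes\C[G_P,\natural]}^{A}(\C_v\otimes\delta)$ shows that it vanishes on $T_x\mc O(V)$ unless $x$ is $G$-conjugate into $G_P$; for $x$ conjugate to $g_0\in G_P$ we have $V^x\cong V^{g_0}\supseteq V^{G_P}\supseteq V^P$, hence $\dim V^x\ge\dim V^P$. So $\pi(P,\delta,v)\in N^{\dim V^P}$ for all $v\in V^P$, and therefore $\C\otimes_\Z R^d(A)\subseteq N^d$ for every $d$; this also completes the ``basis'' assertion once (iii) is proved.

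For (iii) it suffices to show $\nu_{g_0,v_0}\in\C\otimes_\Z R^d(A)$ whenever $\dim V^{g_0}\ge d$. Writing $\nu_{g_0,v_0}=\sum_i\lambda_i\,\pi(P_i,\delta_i,v_i)$ by the spanning hypothesis and applying the graded projection $\mathrm{pr}_{\ge d}\colon N\to N^d$ (which fixes $\nu_{g_0,v_0}$), one reduces to the following statement, to be proved by induction up the poset of parabolics: for every parabolic $Q$, every virtual $\C[G_Q,\natural]$-representation $\rho$, and every $v\in V^Q$, $\mathrm{pr}_{\ge d}\bigl(\mr{ind}_{\mc O(V)\rtimes\C[G_Q,\natural]}^{A}(\C_v\otimes\rho)\bigr)\in\C\otimes_\Z R^d(A)$, where $\C_v\otimes\rho$ is inflated along $\mc O(V_Q)\to\C$ (evaluation at $0$). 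In the inductive step one splits $\mr{tr}\,\rho$ as a part $\rho_{\mathrm{ell}}$ supported on the conjugacy classes $[g_0]$ with $\dim V^{g_0}=\dim V^Q$ plus a part $\rho_{\mathrm{par}}$ supported on the remaining classes. By the finite-codimension hypothesis \eqref{eq:2.13} --- which, restricted to central character $0$ and reformulated through twisted Frobenius reciprocity and nondegeneracy of the character pairing, says precisely that the inductions from proper parabolics span exactly the virtual characters of $\C[G_Q,\natural]$ supported off the $V_Q$-elliptic classes --- one has $\rho_{\mathrm{par}}\in\sum_{Q'\subsetneq Q}\mr{ind}_{\C[G_{Q'},\natural]}^{\C[G_Q,\natural]}\bigl(R(\C[G_{Q'},\natural])\bigr)$; inducing in stages up to $A$ and absorbing the evaluation at $v$ into the larger fixed-point space $V^{Q'}\supseteq V^Q$, the corresponding summand becomes $\sum_{Q'\subsetneq Q}\mathrm{pr}_{\ge d}\bigl(\mr{ind}_{\mc O(V)\rtimes\C[G_{Q'},\natural]}^{A}(\C_v\otimes\rho_{Q'})\bigr)$, handled by the inductive hypothesis. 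For the term coming from $\rho_{\mathrm{ell}}$: if $\dim V^Q<d$ its image lies in $\tilde N_{\dim V^Q}$ and is killed by $\mathrm{pr}_{\ge d}$; if $\dim V^Q\ge d$, decompose $\rho_{\mathrm{ell}}$, modulo the induced part of \eqref{eq:2.13}, into irreducible elliptic representations (using that irreducible characters span the Grothendieck group), obtaining members of families $\mf F(Q,\delta)$ of dimension $\dim V^Q\ge d$ plus a further induced term again covered by the inductive hypothesis. This proves (iii), and (i)--(iii) together give the lemma. The main obstacle is precisely step (iii): controlling, via the reformulation of \eqref{eq:2.13} and the bookkeeping of induction in stages, how the elliptic and the parabolically induced parts of the virtual characters of $\C[G_Q,\natural]$ propagate under $\mr{ind}(\C_v\otimes-)$; steps (i) and (ii) are comparatively routine.
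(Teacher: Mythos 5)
Your steps (i) and (ii) are sound. For (ii) you argue via the Frobenius character formula (the character of $\pi (P,\delta,v)$ is supported on the classes conjugate into $G_P$, whose fixed spaces contain $V^P$), whereas the paper obtains the same containment $\C \otimes_\Z R^d (A) \subseteq \mr{span}\{\nu_{g,v} : \dim V^g \geq d\}$ by noting that for $\omega$ in the summand of $HH_0 (A)$ indexed by $g$ with $\dim V^g < \dim V^{P_i}$, the regular function $v_i \mapsto \mr{tr}(\omega, \pi (P_i,\delta_i,v_i))$ vanishes on the dense subset $V^{P_i} \setminus G V^g$ and hence everywhere. Both arguments are valid.

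The genuine gap is in step (iii). First, your reformulation of \eqref{eq:2.13} is not what that hypothesis says: \eqref{eq:2.13} asserts only that the parabolically induced part has \emph{finite codimension}; it does not identify that part as ``exactly the virtual characters supported off the $V_Q$-elliptic classes'', and the axioms on the $G_P, V_P, V^P$ do not even guarantee that a class of $G_Q$ with a nonzero fixed vector in $V_Q$ meets some proper $G_{Q'}$. Second, even granting that identification, your induction produces members of families $\mf F (Q',\delta)$ for arbitrary parabolics $Q'$ and arbitrary elliptic $\delta$, whereas $R^d (A)$ is by definition the $\Z$-span of the members of the \emph{given} finite list $\{ \mf F (P_i,\delta_i) \}_{i=1}^{n_{\mf F}}$ of dimension $\geq d$; the hypotheses do not say that this list contains every elliptic family, so your chain of reductions does not land in $\C \otimes_\Z R^d (A)$. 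The inclusion $\nu_{g,v} \in \C \otimes_\Z R^{\dim V^g}(A)$ should instead be obtained from the spanning hypothesis together with a genericity and central-character argument: for $v$ in the dense open subset of $V^g$ whose $G$-orbit misses every $V^{P_i}$ with $\dim V^{P_i} < \dim V^g$ (possible since each $G V^{P_i} \cap V^g$ is then a proper closed subvariety of $V^g$), any expression of $\nu_{g,v}$ in terms of the $\pi (P_i,\delta_i,v_i)$ may, by linear independence of trace functions with distinct $\mc O (V)^G$-characters, be taken to involve only members with central character $Gv$, hence only families with $\dim V^{P_i} \geq \dim V^g$; the non-generic $v$ are then reached by the algebraic extension argument carried out in Lemma \ref{lem:2.7}.
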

\begin{proof}
By \eqref{eq:2.12} every $\nu_{g,v}$ belongs to $\C \otimes_\Z R^0 (A)$. Consider a family
$\mf F (P_i,\delta_i)$ and an element 
\[
\omega \in ( \mc O (V^g) \otimes \natural^g )^{Z_G (g)} \subset HH_0 (A) ,
\]
such that $\dim V^{P_i} > \dim V^g$. Then 
\begin{equation}\label{eq:2.15}
v_i \mapsto \mr{tr}(\omega, \pi (P_i,\delta_i,v_i))
\end{equation}
is a regular function on $V^{P_i}$, like for any element of $HH_0 (A)$. But $\omega$ is not
defined outside $V^g$, so \eqref{eq:2.15} can only be zero there. Since $V^{P_i} \setminus V^g$
is dense in $V^{P_i}$, \eqref{eq:2.15} is zero everywhere. 

Consequently every $\nu_{g,v}$ lies in $\C \otimes_\Z R^{\dim V^g}(A)$, and the trace functions
from $R^d (A)$ are determined by their restrictions to
\[
\bigoplus\nolimits_{g \in \langle G \rangle : \dim V^g \geq d} 
(\mc O (V^g) \otimes \natural^g )^{Z_G (g)} .
\]
From this we see that $\C \otimes_\Z R^d (A)$ is exactly the $\C$-span of the $\nu_{g,v}$
with $\dim V^g \geq d$. By Lemma \ref{lem:2.4} these $\nu_{g,v}$ are linearly independent.
\end{proof}

Next we describe an algorithm to choose a minimal set of algebraic families of $A$-representations.
We start with the family $\mf F (\emptyset,\mr{triv})$ and proceed recursively. Suppose that for 
every dimension $D > d$ we have chosen a set of $D$-dimensional algebraic families 
$\mf F (P_i,\delta_i)$, where $i$ runs through some index set $I_D$, with the following property:
for generic $v_i \in V^{P_i}$ the set 
\[
\big\{ \pi (P_j ,\delta_j ,v_j) : j \in I_D, D > d,  cc(\pi (P_j ,\delta_j ,v_j)) = 
cc(\pi (P_i ,\delta_i ,v_i)) \big\}
\]
is linearly independent in $\Q \otimes_\Z R(G)^{\mf s}$. Here $cc$ denotes the 
$\mc O (V)^G$-character of an $A$-representation (if it exists). Moreover we regard 
$\mf F (P_j,\delta_j)$ here as a family in $R (A)$, which means that $\pi (P_j ,\delta_j ,v_j)$ 
and $\pi (P_j ,\delta_j ,v'_j)$ are considered as the same element if they have the same trace. 
This point of view is necessary for the linear independence criterion.  

Next we consider the set of $d$-dimensional algebraic families $\mf F (P'_i,\delta'_i)$. 
Suppose that for generic $v'_i \in V^{P'_i}$, the representation
$\pi (P'_i,\delta'_i,v'_i)$ is $\Q$-linearly independent from
\[
\big\{ \pi (P_j, \delta_j, v_j) : j \in I_D, D > d, cc(\pi (P_j, \delta_j, v_j)) =
cc(\pi (P'_i, \delta'_i, v'_i)) \big\} ,
\]
were we still regard $\mf F (P_j,\delta_j)$ as a family in $R (A)$.
Then we add $\mf F (P'_i,\delta'_i)$ to our collection of algebraic families.

Consider the remaining $d$-dimensional algebraic families. For $\mf F (P'_j, \delta'_j)$ we look at 
the same condition as for $\mf F (P'_i,\delta'_i)$, but now with respect to the index set 
$\cup_{D > d} I_D \cup \{i'\}$ instead of $\cup_{D > d} I_D$. If that condition is fulfilled, we add 
$\mf F (P'_j, \delta'_j)$ to our set of algebraic families. We continue this process until none of the
remaining $d$-dimensional algebraic families is (over generic points of that family) $\Q$-linearly
independent from the algebraic families that we chose already. At that point our set of 
$d$-dimensional algebraic families is complete, and we move on to families of dimension $d-1$.

In the end, this algorithm yields a collection 
\begin{equation}\label{eq:2.38}
\{ \mf F (P_i,\delta_i) : i \in I_d, 0 \leq d \leq \dim V \}
\end{equation}
such that:
\begin{itemize}
\item the representations $\{ \pi (P_i,\delta_i,v_i), i \in \cup_d I_d, v_i \in V^{P_i} \}$
span $\Q \otimes_\Z R(A)$,
\item if we remove any index from $\cup_d I_d$, the previous bullet becomes false,
\item for generic $v_i \in V^{P_i}$, $\pi (P_i,\delta_i,v_i)$ does not belong
to the span in $\Q \otimes_\Z R(A)$ of the other families $\mf F (P_j, \delta_j)$.
\end{itemize}

Notice that each $V^g$ is a vector space, and in particular an irreducible variety. That entails
that the $g \in G$ underlie a dichotomy, based the behaviour of the group
\[
Z_G (g,V^g) = \{ h \in Z_G (g) : h v = v \; \forall v \in V^g \} :
\]
\begin{itemize}
\item Suppose that $\natural^g (h) \neq 1$ for some $h \in Z_G (g,V^g)$. Then
\[
(\Omega^n (V^g) \otimes \natural^g )^{Z_G (g)} \subset
\Omega^n (V^g) \otimes (\natural^g )^{Z_G (g,V^g)} = 0
\] 
and the summand of $HH_n (A)$ indexed by $g$ is zero. 
\item Suppose that $Z_G (g,V^g) \subset \ker (\natural^g)$. Notice that $\dim (V^g)^k < 
\dim (V^g)$ for every $k \in Z_G (g) \setminus Z_G (g,V^g)$. Hence the set $\tilde{V^g}$ of 
$v \in V^g$ that are not fixed by any such $k$ is Zariski open and dense in $V^g$. The action 
of $Z_G (g)$ on $\tilde{V^g}$ factors through a free action of $Z_G (g) / Z_G (g,V^g)$, so it 
is easy to attain $Z_G (g)$-invariance on $\tilde{V^g}$. Therefore the summand of $HH_n (A)$ 
indexed by $g$ is nonzero.
\end{itemize}
To distinguish these cases, we say $g$ is $HH (A)$-irrelevant or $HH (A)$-relevant.
Recall from Lemma \ref{lem:2.5} that the set of trace functions
\[
\{ \nu_{g,v} : g \in \langle G \rangle, \dim V^g = d, v \in V^g / Z_G (g),
G_v \cap Z_G (g) \subset \ker (\natural^g) \}
\]
forms a $\C$-basis of $\C \otimes_\Z (R^d (A) / R^{d+1}(A) )$. For a fixed $HH(A)$-relevant $g$,
this gives an algebraic family of trace functions on $A$, supported on the sum of
the linear subspaces $T_{g'} \mc O (V)$ with $g'$ conjugate to $g$. Every member of this family
factors through $A / I_v^G$ for the appropriate maximal ideal $I_v^G$ of $\mc O (V)^G$, so
corresponds to a unique virtual $A$-representation with $\mc O (V)^G$-character $G v$. 

\begin{lem}\label{lem:2.7}
For $i = 1,\ldots,n_{\mf F}$ and $g \in \langle G \rangle$ there exist $\lambda_{g,i} \in \C$ and 
a map $\phi_{g,i} : V^g \to V^{P_i}$, given by some element of $G$, such that
\[
\nu_{g,v} = \sum\nolimits_{i : \dim (V^{P_i}) \geq \dim V^g} \lambda_{g,i} 
\mr{tr} \, \pi (P_i,\delta_i , \phi_{g,i} (v) )
\]
for all $v \in V^g$ with $G_v \cap Z_G (g) \subset \ker (\natural^g)$.
\end{lem}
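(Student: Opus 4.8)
The plan is to prove Lemma~\ref{lem:2.7} by dualising the conclusion of Lemma~\ref{lem:2.5}. The setup of the lemma assumes implicitly that the family $\{\mf F(P_i,\delta_i)\}_{i=1}^{n_{\mf F}}$ is one for which the spanning property of Lemma~\ref{lem:2.5} holds (such a family exists by the algorithm producing \eqref{eq:2.38}, applied to $V^P$ in place of $V$). Then Lemma~\ref{lem:2.5} tells us that the $\mathrm{tr}\,\pi(P_i,\delta_i,v_i)$ together span $\C\otimes_\Z R(A)\subset HH_0(A)^*$, and more precisely that for each $d$ the traces of the families of dimension $\geq d$ span the subspace $\C\otimes_\Z R^d(A)$ which has as a basis the $\nu_{g,v}$ with $\dim V^g\geq d$. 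Since each $\nu_{g,v}$ with $\dim V^g=d$ lies in $\C\otimes_\Z R^{d}(A)$ and this space is spanned by the members $\pi(P_i,\delta_i,v_i)$ with $\dim V^{P_i}\geq d$, there exist scalars $\lambda_{g,i}\in\C$ and parameter values $v_i\in V^{P_i}$, depending a priori on $v$, with $\nu_{g,v}=\sum_{i:\dim V^{P_i}\geq\dim V^g}\lambda_{g,i}\,\mathrm{tr}\,\pi(P_i,\delta_i,v_i)$. The point of the lemma is to upgrade this pointwise statement into one where the coefficients $\lambda_{g,i}$ are constant in $v$ and the parameter $v_i$ is a fixed linear (indeed $G$-induced) map of $v$.

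First I would fix $g$ and restrict attention to the $HH(A)$-relevant case (otherwise $\nu_{g,v}$ makes no sense and there is nothing to prove). As observed in the paragraph before the lemma, the assignment $v\mapsto\nu_{g,v}$ defines an \emph{algebraic} family of trace functions on $A$, supported on $\bigoplus_{g'\sim g}T_{g'}\mc O(V)$: concretely it is the image of the tautological element of $\big(\mc O(V^g)\otimes\natural^g\big)^{Z_G(g)}\otimes_{\mc O(V^g)^{Z_G(g)}}\mc O(V^g)$ under the realisation \eqref{eq:2.23}. Thus $v\mapsto\nu_{g,v}$ is an algebraic family in $\C\otimes_\Z R^{\dim V^g}(A)$. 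Next I would express the finite-dimensional space $\C\otimes_\Z R^{\dim V^g}(A)$ using the basis $\{\nu_{g',v'}\}$ from Lemma~\ref{lem:2.5}, and simultaneously use that this same space is spanned by the regular functions $v_i\mapsto\mathrm{tr}\,\pi(P_i,\delta_i,v_i)$ for the finitely many families of dimension $\geq\dim V^g$. The compatibility of these two descriptions, together with the fact that the trace of $\nu_{g,v}$ is concentrated on the conjugacy class of $g$, forces the scalars to depend only on which summand of $HH_0(A)^*$ one lands in and hence to be independent of $v$; and it forces the only surviving terms $\pi(P_i,\delta_i,v_i)$ to be those whose own trace function is supported on the conjugacy class of $g$, which (by the description of traces of induced representations via Mackey theory, as in \eqref{eq:2.24}) happens precisely when $g$ is $G$-conjugate into $G_{P_i}$, and then the relevant parameter $v_i$ is obtained from $v$ by transporting along that conjugating element — this is the map $\phi_{g,i}$, given by an element of $G$.

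The main obstacle I expect is the bookkeeping needed to see that $\lambda_{g,i}$ can be taken independent of $v$ and that $\phi_{g,i}$ is genuinely $G$-induced rather than merely some regular map $V^g\to V^{P_i}$. To handle this cleanly I would argue on the associated graded level: in $\C\otimes_\Z\big(R^{d}(A)/R^{d+1}(A)\big)$, with $d=\dim V^g$, the images of the $\nu_{g',v'}$ with $\dim V^{g'}=d$ form a basis, and the image of each $d$-dimensional family $\mf F(P_i,\delta_i)$ (i.e.\ $\dim V^{P_i}=d$) decomposes, after pairing against $HH_n(A)$, into the summands indexed by $Z_G(g)$-orbits of $g'$ with $V^{g'}$ of dimension $d$ that are conjugate into $G_{P_i}$; comparing coefficients in this basis yields constant $\lambda_{g,i}$ and the $G$-induced $\phi_{g,i}$ for the top-dimensional part. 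Then one descends by induction on $d$, subtracting off the already-constructed top part and repeating with $R^{d}(A)$ replaced by $R^{d+1}(A)$; the families of strictly larger dimension contribute the terms with $\dim V^{P_i}>\dim V^g$. Finally I would note that the identity, having been proved as an identity of regular functions on a Zariski-dense open subset of $V^g$ (the locus where $G_v\cap Z_G(g)\subset\ker(\natural^g)$, which is dense by the $HH(A)$-relevance dichotomy), extends to all such $v$ by continuity, completing the proof.
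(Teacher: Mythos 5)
Your overall strategy --- extract a pointwise decomposition of $\nu_{g,v}$ from the spanning statement of Lemma \ref{lem:2.5}, then upgrade it to constant coefficients and $G$-induced maps by genericity and algebraic continuation --- is the same as the paper's. However, the mechanism you use to decide which families contribute and to define $\phi_{g,i}$ contains a genuine error. You assert that the surviving terms $\pi(P_i,\delta_i,v_i)$ are exactly those whose trace function is supported on the conjugacy class of $g$, equivalently those with $g$ conjugate into $G_{P_i}$. This is false: $\nu_{g,v}$ is a linear combination of traces of honest induced representations, and terms whose trace vanishes identically on the class of $g$ must in general be included in order to cancel the contributions of the other terms on the remaining conjugacy classes. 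Example \ref{ex:2.B} shows this concretely: there $4 \nu_{\pm \mb{i},v} = -2i\, \mr{tr}\, \pi (\mb i, \delta_{\mb i},v) + i\, \mr{tr}\, \pi (\emptyset, \mr{triv},v)$, and the second term occurs with nonzero coefficient even though $\pm \mb i$ does not lie in (hence is not conjugate into) $G_\emptyset = \{\pm \mb 1\}$ and $\mr{tr}\, \pi (\emptyset, \mr{triv},v)$ vanishes on $T_{\pm \mb i}\, \mc O (V)$. Consequently your recipe ``transport $v$ along the element conjugating $g$ into $G_{P_i}$'' does not produce $\phi_{g,i}$ for all indices $i$ that actually occur, and your identification of the surviving summands in the associated graded comparison is wrong.

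The paper avoids analysing supports of individual traces altogether. It first discards families so that the remaining ones \emph{minimally} span the part of $\C \otimes_\Z R(A)$ with $\mc O (V)^G$-characters in $G V^g / G$ (a reduction you never make, and which is what legitimises any ``comparison of coefficients''). Then, since $\nu_{g,v}$ admits the $\mc O (V)^G$-character $Gv$, minimality forces every surviving $\pi (P_i,\delta_i,v_i)$ to have central character $Gv$ as well, i.e. $v_i \in Gv \cap V^{P_i}$; this is what makes $\phi_{g,i}$ an element of $G$, generically unique up to the stabilizer of the family, so that it extends continuously from a generic basepoint. Constancy of the $\lambda_{g,i}$ then follows because the restriction of $\pi (P_i,\delta_i,\phi_{g,i}(v))$ to $\C [G,\natural]$ is independent of $v$. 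Your graded induction could likely be repaired along these lines, but as written the step determining the $\phi_{g,i}$ would fail.
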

\begin{proof}
To make the construction easier, we may omit some of the families $\mf F (P_i,\delta_i)$, so that 
the remaining families minimally span the part of $\C \otimes_\Z R(A)$ with $\mc O (V)^G$-characters 
in $G V^g / G$. Only families with $\dim (V^{P_i}) \geq \dim V^g$ can remain. 
This can be compared with the construction of the families of representations in \eqref{eq:2.38}.

Let us restrict to the generic part $\tilde{V^g}$ of $V^g$. There exist $\lambda_{g,i}(v) \in \C$ 
and $\phi_{g,i}(v)$ such that the lemma holds for $v \in \tilde{V^g}$. Since $\nu_{g,v}$ admits an
$\mc O (V)^G$ character and the remaining families are minimal in the above sense, every
$\phi_{g,i}(v)$ is unique up to applying some element of $G$ that stabilizes $\mf F (P_i,\delta_i)$.
We fix a generic $\tilde v \in \tilde{V^g}$ and we pick maps $\phi_{g,i}$ such that the required
property holds for $\nu_{g,\tilde v}$. By the uniqueness up to $G$ and the genericity, $\phi_{g,i}$
extends uniquely to continuous map $\tilde{V^g} \to V^{P_i}$.

Every $\phi_{g,i}$ preserves the $\mc O (V)^G$-characters, so is given by some element of $G$. 
In particular it is an injective regular map $V^g \to V^{P_i}$. As a representation of 
$\C [G,\natural]$, $\pi (P_i,\delta_i ,\phi_{g,j} (v))$ 
does not depend on $v$. The numbers $\lambda_{g,i}(v)$ are determined by the
earlier choices, so they do not depend on $v \in \tilde{V^g}$ either. 

Now the definition of the $\phi_{g,i}$ and the $\lambda_{g,i}$ applies to all $v \in V^g$. 
For all $i \in \{1,\ldots,n_{\mf F}\}$ that do not appear in this construction, we set
$\lambda_{g,i} = 0$.
All items in the statement of the lemma depend algebraically on $v$, so the validity of the
lemma extends from $\tilde{V^g}$ to all $v \in V^g$ for which $\nu_{g,v}$ is defined.
\end{proof}

\subsection{Hochschild homology via families of representations} \
\label{par:realiz}

Any algebraic family of representations $\mf F (P,\delta)$ gives rise to a 
homomorphism of $\mc O (V)^G$-algebras
\[
\begin{array}{cccc}
\mc F_{P,\delta} : & A & \to & \mc O (V^{P}) \otimes \mr{End}_\C ( W_{P,\delta}) \\
& f & \mapsto & [v \mapsto \pi (P,\delta,v)(f)] 
\end{array},
\]
where $W_{P,\delta}$ is the vector space underlying $\pi (P,\delta,v)$ for any $v \in V^{P}$. 
Consider a finite set of algebraic families $\mf F (P_i,\delta_i)$ whose members span
$\Q \otimes_\Z R (A)$, like in Lemma \ref{lem:2.5}.
All the $\mc F_{P_i,\delta_i}$ together induce a homomorphism of $\mc O (V)^G$-modules
\[
HH_n (\mc F_A) = HH_n \big( \bigoplus\nolimits_{i=1}^{n_{\mf F}} \mc F_{P_i,\delta_i} \big) : 
HH_n (A) \to \bigoplus\nolimits_{i=1}^{n_{\mf F}} \Omega^n (V^{P_i}) .
\]
We want to show that this map is injective and to describe its image. To that end, we set 
things up so that we can write down an inverse map.
For each $(g,i)$ as in Lemma \ref{lem:2.7}, $\phi_{g,i}$ yields an algebra homomorphism
\begin{equation}\label{eq:2.18}
\begin{array}{cccc}
\phi_{g,i}^* : & \mc O (V^{P_i}) \otimes \mr{End}_\C (W_{P_i,\delta_i}) & \to & 
\mc O (V^g) \otimes \mr{End}_\C (W_{P_i,\delta_i}) \\
& f \otimes A & \mapsto & f \circ \phi_{g,i} \otimes A 
\end{array}
\end{equation}
and an induced map on Hochschild homology
\[
HH_n (\phi_{g,i}^* ) : \Omega^n (V^{P_i}) \to \Omega^n (V^g). 
\]
Summing over all $g,i$ we obtain a homomorphism of $\mc O (V)^G$-modules
\begin{equation}\label{eq:2.30}
HH_n (\phi^*) = \bigoplus_{g \in \langle G \rangle} \sum_{i = 1}^{n_{\mf F}}
\lambda_{g,i} HH_n (\phi_{g,i}^*) : \bigoplus_{i=1}^{n_{\mf F}} \Omega^n (V^{P_i})
\to \bigoplus_{g \in \langle G \rangle} \Omega^n (V^g),
\end{equation}
where $\lambda_{g,i} = 0$ if $g$ is $HH(A)$-irrelevant or $\dim (V^g) > \dim (V^{P_i})$.

\begin{prop}\label{prop:2.8}
Fix a $HH(A)$-relevant $g \in \langle G \rangle$. The map
\[
\sum\nolimits_{i = 1}^{n_{\mf F}} \lambda_{g,i} HH_n (\phi_{g,i}^*) \circ HH_n (\mc F_A) \; : \;
HH_n (A) \longrightarrow \Omega^n (V^g)
\]
\enuma{
\item annihilates the summands $(\Omega^n (V^{g'}) \otimes \natural^{g'} )^{Z_G (g')}$
of $HH_n (A)$ with $g' \in \langle G \rangle \setminus \{g\}$,
\item is the identity on the summand $(\Omega^n (V^{g}) \otimes \natural^{g} )^{Z_G (g)}$
of $HH_n (A)$.
}
\end{prop}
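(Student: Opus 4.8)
The plan is to verify this on the level of Hochschild complexes, using the explicit realizations recalled in the paper. First I would recall the concrete description of $HH_n(A)$ from Theorem~\ref{thm:2.1} and \eqref{eq:2.23}: the summand indexed by $g'$ is realized inside $C_n(A)$ by cycles supported on $T_{g'}C_n(\mc O(V))$, and the corresponding $n$-form is extracted by applying $\Omega$ from \eqref{eq:2.22} and averaging against $\natural^{g'}$ over $Z_G(g')$. On the other hand, for a family $\mf F(P_i,\delta_i)$, the map $HH_n(\mc F_A)$ restricted to the $i$-th summand is $\Omega\circ\mr{gtr}\circ C_*(\mc F_{P_i,\delta_i})$ by Lemma~\ref{lem:2.2}.b, and $HH_n(\phi_{g,i}^*)$ is just precomposition of forms with the linear map $\phi_{g,i}:V^g\to V^{P_i}$. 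So the composite in the statement sends a chain $c\in C_n(A)$ to $\sum_i \lambda_{g,i}\,\phi_{g,i}^*\,\Omega\,\mr{gtr}\,C_*(\mc F_{P_i,\delta_i})(c)\in\Omega^n(V^g)$.

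The key observation is that this composite is computed by a trace pairing and hence is controlled by Lemma~\ref{lem:2.7}. Indeed, on $0$-chains (i.e.\ on $HH_0$) the composite is precisely $a\mapsto \sum_i \lambda_{g,i}\,\mr{tr}\,\pi(P_i,\delta_i,\phi_{g,i}(\,\cdot\,))(a)$ evaluated as a function on $V^g$, which by Lemma~\ref{lem:2.7} equals the trace function $\nu_{g,\cdot}$ — an algebraic family over $V^g$ of virtual $A$-representations. By Lemma~\ref{lem:2.12}, this identification of families in $\C\otimes_\Z R(A)$ upgrades to an identification of the induced maps on $HH_n$ for all $n$: the composite $\sum_i\lambda_{g,i}HH_n(\phi_{g,i}^*)\circ HH_n(\mc F_A)$ equals the map on $HH_n(A)$ induced by the algebraic family of virtual representations $v\mapsto\nu_{g,v}$ over $V^g$. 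Here I should be slightly careful: Lemma~\ref{lem:2.7} gives the trace identity only at points $v$ with $G_v\cap Z_G(g)\subset\ker(\natural^g)$, but those form a Zariski-dense subset of $V^g$ (this is exactly the genericity used in the $HH(A)$-relevant case), so the two algebraic families over $V^g$ agree on a dense set and hence coincide, which is what Lemma~\ref{lem:2.12} needs.

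It then remains to compute the map on $HH_n(A)$ induced by the virtual family $\nu_{g,\cdot}$, summand by summand. For part (a): if $g'\in\langle G\rangle\setminus\{g\}$, I would show the virtual-family map kills $(\Omega^n(V^{g'})\otimes\natural^{g'})^{Z_G(g')}$. Geometrically, by Lemma~\ref{lem:2.5} the trace function $\nu_{g,v}$ lies in $\C\otimes_\Z R^{\dim V^g}(A)$, i.e.\ it is a combination of $\pi(P_i,\delta_i,\cdot)$ only for families with $\dim V^{P_i}\ge\dim V^g$, and more to the point $\nu_{g,v}$ as an element of $HH_0(A)^*$ is supported (via \eqref{eq:2.23}) on $\bigoplus_{g''\sim g}T_{g''}\mc O(V)$; dually, the induced map on $HH_n$ factors through the summand indexed by $g$. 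Concretely one checks that $\mr{gtr}\circ C_*(\mc F_{P_i,\delta_i})$ composed with $\phi_{g,i}^*$, when restricted to a chain supported on $T_{g'}C_n(\mc O(V))$ with $g'$ not conjugate to $g$, is a regular function on $V^g$ that vanishes on the open dense locus where the $\nu_{g,v}$-pairing is supported purely on the $g$-part, hence vanishes identically — this mirrors the density argument in the proof of Lemma~\ref{lem:2.5}. For part (b): on the summand indexed by $g$ itself, using the explicit formula \eqref{eq:2.23} for the representative $T_g f_0\otimes f_1\otimes\cdots\otimes f_n$ and the fact that $\phi_{g,i}$ is (the restriction of) an element of $G$ conjugating into the parabolic, one computes $\sum_i\lambda_{g,i}\,\phi_{g,i}^*\,\mr{gtr}\,C_*(\mc F_{P_i,\delta_i})$ of such a chain and matches it, after the $Z_G(g)$-averaging already built into \eqref{eq:2.23}, with $f_0\,\textup{d}f_1\cdots\textup{d}f_n/n!$; the scalars $\lambda_{g,i}$ are exactly calibrated for this by the $n=0$ case (Lemma~\ref{lem:2.7} with $\nu_{g,v}(T_g)=1$). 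The main obstacle I anticipate is part (b): making the bookkeeping of the $G$-conjugations $\phi_{g,i}$, the scalars $\lambda_{g,i}$, the characters $\natural^g$ (which intervene through \eqref{eq:2.34}), and the $Z_G(g)$-averaging all cancel correctly so that the net effect is genuinely the identity and not merely a nonzero scalar on each $Z_G(g)$-isotypic piece — this is where one must use that the normalization $\nu_{g,v}(T_g)=1$ pins down the constant uniformly in $n$ via Lemma~\ref{lem:2.12}.
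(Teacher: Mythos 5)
Your overall strategy is the same as the paper's (reduce everything to the trace identity of Lemma \ref{lem:2.7} and then evaluate $\Omega \circ \mr{gtr} \circ C_*$ on the subcomplexes $T_{g'} C_*(\mc O (V))$ via \eqref{eq:2.23}), but there is a genuine gap at the decisive step: the passage from the degree-zero identity to degrees $n>0$. Your appeal to Lemma \ref{lem:2.12} is circular here. That lemma only says that the map $\sum_i \lambda_{g,i} HH_n (\phi_{g,i}^* \circ \mc F_{P_i,\delta_i})$ depends on nothing more than the family of virtual representations $v \mapsto \nu_{g,v}$; since $\nu_{g,v}$ is \emph{defined} as that linear combination, identifying the composite with ``the map induced by $\nu_{g,\cdot}$'' restates the problem rather than solving it. One still has to compute what that map does to a chain $T_{g'} f_0 \otimes f_1 \otimes \cdots \otimes f_n$, and the generalized trace of such a chain is \emph{not} determined by the functions $v \mapsto \mr{tr}\, \pi (P_i,\delta_i,\phi_{g,i}(v))(T_{g'}f)$ alone: $\mr{gtr}$ produces a sum, over the $\mc O (V/(g'-1)V)$-isotypic components of $\pi (P_i,\delta_i,\phi_{g,i}(v))$, of the \emph{partial} traces of $T_{g'}$ on each component, tensored with the correspondingly translated functions $f_j \circ w^{-1}$. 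The degree-zero identity only controls the weighted sum of these partial traces, not the individual summands.

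The paper closes exactly this gap with the decomposition \eqref{eq:2.25} of $\phi_{g,i}^* \circ \mc F_{P_i,\delta_i}$ into isotypic families of $\mc O (V/(g'-1)V)$-representations of the form $f \mapsto f \circ w^{-1}$ (possible because the induced representations are semisimple over $\mc O (V)$ with weights $w^{-1}v_i$), followed by a separation argument: for $v$ generic the points $w^{-1}v$ are pairwise distinct for the distinct classes $w/\!\!\sim$, so varying $f$ in $\nu_{g,v}(T_{g'}f) = \sum_i \lambda_{g,i} \sum_w \mr{tr}\big(f(w^{-1}v) T_{g'}, (\pi_i)_w\big)$ isolates each class and yields the component-wise identities \eqref{eq:2.27} and \eqref{eq:2.29}, which extend to all $v$ by continuity and are exactly what is needed to evaluate $\mr{gtr}$ on $n$-chains. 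Your sketch for part (a) (``vanishes on the open dense locus where the $\nu_{g,v}$-pairing is supported purely on the $g$-part'') is not a well-formed substitute for this — the support statement concerns which subspaces of $A$ the functional kills, not a locus in $V^g$ — and your part (b) explicitly defers the same issue (``the scalars are exactly calibrated for this by the $n=0$ case''), which is precisely the claim that requires the isotypic separation; note also that in (b) the matching of the classes $w/\!\!\sim$ with cosets $h \in Z_G (g)/Z_G (g,V^g)$, and the appearance of the factor $\natural^g (h)$ on each, is an output of comparing \eqref{eq:2.28} with \eqref{eq:2.29}, not an input.
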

\begin{proof}
We can express the map as
\[
\sum\nolimits_{i = 1}^{n_{\mf F}} \lambda_{g,i} HH_n (\phi_{g,i}^* \circ \mc F_{P_i,\delta_i}),
\]
where $\phi_{g,i}^* \circ \mc F_{P_i,\delta_i}$ is an algebraic family of representations.
By Lemma \ref{lem:2.7} the members of these families satisfy
\begin{equation}\label{eq:2.40}
\nu_{g,v} = \sum\nolimits_{i = 1}^{n_{\mf F}} \lambda_{g,i} 
\mr{tr} \, \pi (P_i,\delta_i, \phi (g,i)(v)) 
\end{equation}
whenever $\nu_{g,v}$ is defined. For $g' \in \langle G \rangle$ we consider the commutative 
algebra 
\[
A_{g'} := \C [\{T_{g^n} : n \in \Z\}] \otimes \mc O (V / (g'-1) V)
\]
As each $\pi (P_i,\delta_i,v_i)$ is obtained by induction from an $\mc O (V_{P_i}) \rtimes
\C [G_{P_i},\natural]$-representation on which $\mc O (V_{P_i})$ acts via evaluation at 0,
it is a semisimple $A$-representation and $\mc O (V)$ acts on $T_w V_{\delta_i}$ via 
evaluation at $w^{-1} v_i$. It follows that $\phi_{g,i}^* \circ \mc F_{P_i,\delta_i}$
consists of semisimple representations and can be decomposed as a direct sum of families
of $\mc O (V / (g'-1)V)$-representations of the form $f \mapsto f \circ w^{-1}$ for some
$w \in G$. Not all $w \in G$ appear here and some $w'$ may give the same family. We record
this as an ``isotypic" decomposition of $\mc O (V / (g'-1)V)$-representations 
\begin{equation}\label{eq:2.25}
\phi_{g,i}^* \circ \mc F_{P_i,\delta_i} = 
\bigoplus\nolimits_{w / \sim} (\phi_{g,i}^* \circ \mc F_{P_i,\delta_i} )_w 
\end{equation}
As $T_{g'}$ commutes with $\mc O (V / (g'-1)V)$, it stabilizes this decomposition.\\
(a) For $g' \neq g$ and $f \in \mc O (V / (g'-1)V)$ we have $\nu_{g,v}(T_{g'} f) = 0$.
In terms of \eqref{eq:2.25} that becomes
\begin{equation}\label{eq:2.26}
\sum\nolimits_{i = 1}^{n_{\mf F}} \lambda_{g,i} \sum\nolimits_{w / \sim} 
\mr{tr} \big( f(w^{-1}v) T_{g'}, (\pi (P_i,\delta_i, \phi_{g,i}(v) )_w \big) = 0.
\end{equation}
The subalgebra $\C [\{T_{g^n} : n \in \Z\}]$ has finite dimension and is semisimple, so
the restriction of the above representations to this subalgebra do not depend on $v \in V^{g'}$.
For $v \in V^g$ in generic position, we can separate the various $w/\!\!\sim$ in \eqref{eq:2.26},
which leads to 
\begin{equation}\label{eq:2.27}
\sum\nolimits_{i = 1}^{n_{\mf F}} \lambda_{g,i}
\mr{tr} \big( T_{g'}, (\pi (P_i,\delta_i, \phi_{g,i}(v) )_w \big) = 0
\end{equation}
for all $w / \!\! \sim$. By continuity that extends from generic $v$ to all $v \in V^{g'}$.
From \eqref{eq:2.27} we see that
\[
\sum\nolimits_{i = 1}^{n_{\mf F}} \lambda_{g,i} \mr{gtr} \circ
C_* (\phi_{g,i}^* \circ \mc F_{P_i,\delta_i}) \quad \text{annihilates} \quad
T_{g'} C_* \big( \mc O (V / (g'-1)) \big) .
\]
Combine that with \eqref{eq:2.23}.\\
(b) For $g' = g$, \eqref{eq:2.26} becomes
\begin{equation}\label{eq:2.28}
\sum\nolimits_{i = 1}^{n_{\mf F}} \lambda_{g,i} \sum\nolimits_{w / \sim} \mr{tr} 
\big( f(w^{-1}v) T_g, (\pi (P_i,\delta_i, \phi_{g,i}(v) )_w \big) = \nu_{g,v}(T_g f) .
\end{equation}
From \eqref{eq:2.23} and the $HH(A)$-relevance of $g$ we see that
\begin{equation}\label{eq:2.29}
\nu_{g,v}(T_g f) = [Z_G (g) : Z_G (g,V^g)]^{-1} 
\sum\nolimits_{h \in Z_G (g) / Z_G (g,V^g)} \natural^g (h) f (h^{-1} v).
\end{equation}
Comparing \eqref{eq:2.28} and \eqref{eq:2.29}, we deduce that every $w / \!\! \sim$ can be rewritten 
as a unique $h \in Z_G (g) / Z_G (g,V^g)$. Then \eqref{eq:2.28} becomes
\[
\sum\nolimits_{i = 1}^{n_{\mf F}} \lambda_{g,i} \sum\nolimits_{h \in Z_G (g) / Z_G (g,V^g)} \mr{tr} 
\big( f(h^{-1}v) T_g, (\pi (P_i,\delta_i, \phi_{g,i}(v) )_h \big) = \nu_{g,v}(T_g f) .
\]
Like in part (a) we can separate the various $h$, leading to
\[
\sum\nolimits_{i = 1}^{n_{\mf F}} \lambda_{g,i} \big( T_g, (\pi (P_i,\delta_i, \phi_{g,i}(v) )_h 
\big) = [Z_G (g) : Z_G (g,V^g)]^{-1} \natural^g (h) .
\]
Initially this holds only for generic $v$, but by continuity it extends to all $v \in V^g$. We deduce
\begin{equation*}
\sum_{i = 1}^{n_{\mf F}} \lambda_{g,i} \mr{gtr} \circ C_n (\phi_{g,i}^* \circ 
\mc F_{P_i,\delta_i}) (T_g \omega) = [Z_G (g) : Z_G (g,V^g)]^{-1} \hspace{-3mm} 
\sum_{h \in Z_G (g) / Z_G (g,V^g)} \hspace{-3mm} \natural^g (h) h \cdot \omega
\end{equation*}
for all $\omega \in C_n (\mc O (V / (g-1)V))$. In view of \eqref{eq:2.23}, this says exactly that
the map of the lemma is the identity on $T_g C_* (\mc O (V / (g-1)V))$.
\end{proof}

From \eqref{eq:2.38} and Lemmas \ref{lem:2.2} and \ref{lem:2.12} we see that 
\begin{equation}\label{eq:2.39}
HH_n (\phi^*) \circ HH_n (\mc F_A) \; : \; HH_n (A) \to 
\bigoplus\nolimits_{g \in \langle G \rangle} \Omega^n (V^g)
\end{equation}
can be considered as evaluation at the families of virtual $A$-representations $\nu_{g,v}$
(extended naturally to all $v \in V^g$).

From now on we assume that our collection of algebraic families $\mf F (P_i,\delta_i)$ has been
chosen in a minimal way, as in \eqref{eq:2.38}.

\begin{lem}\label{lem:2.9}
Under the above assumption, the map
\[
HH_n (\phi^*) \; : \; \bigoplus\nolimits_{i=1}^{n_{\mf F}} \Omega^n (V^{P_i}) \to 
\bigoplus\nolimits_{g \in \langle G \rangle} \Omega^n (V^g)
\]
is injective.
\end{lem}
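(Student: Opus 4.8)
The plan is to exploit the ``block-triangular'' shape of $HH_n(\phi^*)$ relative to the filtrations of source and target by the dimensions $\dim V^{P_i}$, resp.\ $\dim V^g$. By \eqref{eq:2.30} one has $\lambda_{g,i}=0$ whenever $\dim V^g>\dim V^{P_i}$; so if we group the summands by dimension and order them decreasingly, $HH_n(\phi^*)$ becomes block lower-triangular, and it is injective as soon as, for every $d$, the diagonal block
\[
D_d\;:\;\bigoplus_{i:\dim V^{P_i}=d}\Omega^n(V^{P_i})\;\longrightarrow\;\bigoplus_{g\in\langle G\rangle:\,\dim V^g=d}\Omega^n(V^g),\qquad
(\omega_i)_i\longmapsto\Big(\textstyle\sum_i\lambda_{g,i}HH_n(\phi_{g,i}^*)(\omega_i)\Big)_g,
\]
is injective. (If $0\neq\sum_i\omega_i\in\ker HH_n(\phi^*)$, take $d$ maximal with some nonzero $\omega_i$ of index $\dim V^{P_i}=d$; then the degree-$d$ part of $HH_n(\phi^*)(\sum_i\omega_i)$ equals $D_d$ applied to the nonzero degree-$d$ part of $\sum_i\omega_i$.) On the diagonal each $\phi_{g,i}\colon V^g\to V^{P_i}$ is induced by an element of $G$, hence is an injective linear map between vector spaces of the same dimension $d$, hence an isomorphism; so every $HH_n(\phi_{g,i}^*)$ entering $D_d$ is an isomorphism, and $D_d$ is a ``matrix of isomorphisms'' with the scalars $\lambda_{g,i}$ as entries.

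Next I would reduce the injectivity of $D_d$ to finite-dimensional linear algebra. Each $V^{P_i}$ with $\dim V^{P_i}=d$ maps to a $d$-dimensional irreducible subvariety $S_i\subset V/\!\!/G$, and the isomorphisms $\phi_{g,i}$, being given by elements of $G$, match up these subvarieties (so $\lambda_{g,i}\neq 0$ forces the image of $V^g$ in $V/\!\!/G$ to equal $S_i$). Since $\Omega^n(V^{P_i})$ is torsion-free over $\mc O(V^{P_i})$, hence over the image of $\mc O(V)^G$ in $\mc O(V^{P_i})$, an element $\omega_i$ is zero as soon as it vanishes at the generic point of $S_i$. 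So it is enough to fix a $d$-dimensional stratum $S$, localise $\mc O(V)^G$ at its generic point $\eta_S$, and show $D_d$ becomes injective there. After localisation only the indices $i$ and $g$ attached to $S$ survive, and --- using the isomorphisms $\phi_{g,i}^*$, which localise to isomorphisms as well --- $D_d$ at $\eta_S$ is literally a square matrix of scalars $(\lambda_{g,i})$ over the function field $\kappa(\eta_S)$ (square by Lemma \ref{lem:2.5}); what remains is to prove this matrix is invertible.

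This last point is the main obstacle, and it is exactly where the minimality built into \eqref{eq:2.38} is used. If the matrix $(\lambda_{g,i})$ attached to $S$ were singular, then by Lemma \ref{lem:2.7} (which writes each $\nu_{g,v}$ as $\sum_i\lambda_{g,i}\,\mr{tr}\,\pi(P_i,\delta_i,\phi_{g,i}(v))$) together with the linear independence of the $\nu_{g,v}$ in $\C\otimes_\Z\big(R^d(A)/R^{d+1}(A)\big)$ (Lemma \ref{lem:2.5}), one could express $\pi(P_{i_0},\delta_{i_0},v_{i_0})$, for generic $v_{i_0}\in V^{P_{i_0}}$, as a $\Q$-linear combination of the other families $\mf F(P_j,\delta_j)$, contradicting the third bullet of \eqref{eq:2.38}. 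Hence the matrix is invertible, $\ker D_d=0$ for every $d$, and therefore $HH_n(\phi^*)$ is injective.
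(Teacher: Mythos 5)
Your block-triangular reduction is correct and is a clean way to organise the proof: since $\lambda_{g,i}=0$ whenever $\dim V^g>\dim V^{P_i}$, injectivity of $HH_n(\phi^*)$ does follow from injectivity of the diagonal blocks $D_d$, and the idea of settling each $D_d$ generically on each stratum using Lemma \ref{lem:2.5}, Lemma \ref{lem:2.7} and the minimality of \eqref{eq:2.38} is the right one (it is in essence what the paper's proof does). The gap is in the assertion that $D_d$ localises at $\eta_S$ to ``literally a square matrix of scalars $(\lambda_{g,i})$ over $\kappa(\eta_S)$''. The maps $V^{P_i}\to S$ and $V^g\to S$ are in general only generically \emph{finite}, not birational, so $\Omega^n (V^{P_i})$ localised at $\eta_S$ is a module over an \'etale $\kappa(\eta_S)$-algebra of degree possibly greater than $1$, and the various $\phi_{g,i}^*$ permute its branches by genuinely different elements of $G$. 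Already in Example \ref{ex:2.B} the map $V^{\mb i}=\{0\}\times \C\to S$ has degree $2$, and the single family $\mf F (\mb i,\delta_{\mb i})$ contributes the two independent representations $\pi (\mb i,\delta_{\mb i},v)$ and $\pi (\mb i,\delta_{\mb i},-v)$ over one generic central character $Gv$. Consequently the object that is square and invertible at a generic central character is not your matrix indexed by the $g$'s and the $i$'s, but the transition matrix between the basis $\{\nu_{g,v'}\}$ of Lemma \ref{lem:2.5} (indexed by \emph{pairs} $(g,v')$) and the representations $\pi (P_i,\delta_i,v_i)$ (indexed by pairs $(i,v_i)$); its entries also record which branch each $\phi_{g,i}$ selects. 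So Lemma \ref{lem:2.5} does not give that $(\lambda_{g,i})$ is square, and even when it is, its invertibility is neither what the injectivity of $D_d$ requires (the branch permutations enter the block) nor what the linear independence of the $\nu_{g,v}$ delivers.

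The missing ingredient is a device to separate the branches, and it is exactly what the paper's proof supplies: one restricts to a small analytic open set $U\subset V^{P_j}$ on which $\omega_j$ is nowhere zero and which meets every relevant $G$-orbit at most once ($wU\cap U=\emptyset$ unless $w$ acts trivially there). Over such a $U$ the pointwise linear independence of $\{\pi (P_j,\delta_j,u):u\in U\}$ from the span of all other families (the third bullet of \eqref{eq:2.38}) applies directly and shows that the contribution $\lambda_{g,j}\,\phi_{g,j}^*\omega_j$ to the $g$-component over $\phi_{g,j}(U)$ cannot be cancelled. With that modification your argument goes through; without it, the step ``the localised block is the scalar matrix $(\lambda_{g,i})$, which is invertible'' is not justified.
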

\begin{proof}
Consider a nonzero $\sum_{i=1}^{n_{\mf F}} \omega_i \in \bigoplus_{i=1}^{n_{\mf F}} \Omega^n (V^{P_i})$.
We select an index $j$ and a small open set $U$ (for the analytic topology) in $V^{P_j}$ such that 
$\omega_j (u) \neq 0$ for all $u \in U$. Since the set of generic points in $V^{P_j}$ is open and 
dense, we may assume that $\{ \pi (P_j,\delta_j,u) : u \in U\}$ does not share any
$\mc O (V)^G$-characters with any family $\mf F (P_i,\delta_i)$ of lower dimension, and that
$w U \cap U$ is empty unless $w \in Z_G (g,V^g)$. 

The construction of a minimal set of algebraic families of $A$-representations entails that
none of the representations $\{ \pi (P_j,\delta_j,u) : u \in U\}$ belongs to the span in
$\Q \otimes_\Z R(A)$ of the other families $\mf F (P_i,\delta_i)$. The same holds for any
linear combination of these representations, because $U$ does not contain two points from any
$Z_G (g)$-orbit.

Now Lemma \ref{lem:2.4} shows there must exist a $g \in \langle G \rangle$ with 
$\lambda_{g,j} \neq 0$. The component of $HH_n (\phi^*)$ indexed by $g$ is
$\sum_{i=1}^{n_{\mf F}} \lambda_{g,i} HH_n (\phi_{g,i}^*)$, so
\[
(HH_n (\phi^*) \omega ) \big|_{\phi_{g,j} (U)} = \sum\nolimits_{i=1}^{n_{\mf F}} \lambda_{g,i}
HH_n (\phi_{g,i}^*) \omega |_{\phi_{g,i}^{-1} \phi_{g,j} U} .
\] 
This is nonzero by the above linear independence property of the set
$\{ \pi (P_j,\delta_j,u) : u \in U\}$.
\end{proof}

With Proposition \ref{prop:2.8} and Lemma \ref{lem:2.9} we can provide a description of $HH_n (A)$ 
in the style of \cite{BDK}. Recall that $V$ is a finite dimensional $G$-representation, that 
$A = \mc O (V) \rtimes \C [G,\natural])$ and that $\mf F (P_i,\delta_i) \; (i=1,\ldots,n_{\mf F})$ 
are algebraic families of $A$-representations whose members span $\C \otimes_\Z R(A)$ in a 
minimal way.

\begin{thm}\label{thm:2.6}
\enuma{
\item The homomorphism of $\mc O (V)^G$-modules
\[
HH_n (\mc F_A) = \bigoplus\nolimits_{i=1}^{n_{\mf F}} HH_n (\mc F_{P_i,\delta_i}) \;:\;
HH_n (A) \to \bigoplus\nolimits_{i=1}^{n_{\mf F}} \Omega^n (V^{P_i})
\] 
is injective. The homomorphism of $\mc O (V)^G$-modules
\[
HH_n (\phi^*) \;:\; HH_n (\mc F_A) HH_n (A) \to \bigoplus\nolimits_{g \in \langle G \rangle} 
\big( \Omega^n (V^{g}) \otimes \natural^{g} \big)^{Z_G (g)}
\]
is bijective.
\item In degree $n=0$, the condition on $\omega = \sum_{i=1}^{n_{\mf F}} \omega_i \in 
\bigoplus_{i=1}^{n_{\mf F}} \mc O (V^{P_i})$ that 
describes the image is: whenever $\lambda_j \in \C, i_j \in \{1,\ldots,n_{\mf F}\}, v_{i_j} \in 
V^{P_{i_j}}$ and $\sum_j \lambda_j \pi (P_{i_j},\delta_{i_j},v_{i_j}) = 0$ in $\C \otimes_\Z R(A)$, 
also $\sum_j \lambda_j \omega_{i_j} (v_{i_j}) = 0$. This determines an isomorphism of $Z(A)$-modules
\[
HH_0 (A) \cong (\C \otimes_\Z R (A))^*_{\mr{reg}} .
\]
}
\end{thm}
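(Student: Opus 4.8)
The plan is to assemble the two assertions of part (a) from the earlier lemmas and then read off part (b) by specializing to $n=0$. First I would prove injectivity of $HH_n(\mc F_A)$. By Lemma \ref{lem:2.9} the composite $HH_n(\phi^*)\circ HH_n(\mc F_A)$ has $HH_n(\phi^*)$ injective on the image of $HH_n(\mc F_A)$, so it suffices to show that $HH_n(\phi^*)\circ HH_n(\mc F_A)$ is injective on all of $HH_n(A)$; but by \eqref{eq:2.39} this composite is exactly the map that records evaluation of the families of virtual representations $\nu_{g,v}$, and Proposition \ref{prop:2.8} computes it summand by summand: on the summand $(\Omega^n(V^g)\otimes\natural^g)^{Z_G(g)}$ of $HH_n(A)$ (for $g$ relevant) the $g$-component of the composite is the identity, and for $g$ irrelevant that summand of $HH_n(A)$ is zero by the dichotomy preceding Lemma \ref{lem:2.7}. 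Hence $HH_n(\phi^*)\circ HH_n(\mc F_A)$ is a block-diagonal map which is the identity on each nonzero block, so it is an isomorphism onto $\bigoplus_{g\in\langle G\rangle}(\Omega^n(V^g)\otimes\natural^g)^{Z_G(g)}$. This simultaneously gives the injectivity of $HH_n(\mc F_A)$ and the bijectivity of $HH_n(\phi^*)$ restricted to $HH_n(\mc F_A)HH_n(A)$, which is the second claim of (a). All maps in sight are $\mc O(V)^G$-linear by construction (Definition \ref{def:2}, \eqref{eq:2.18}, \eqref{eq:2.30}, Lemma \ref{lem:2.2}), so the isomorphism is one of $\mc O(V)^G$-modules; combined with Theorem \ref{thm:2.1} this re-identifies the target with $HH_n(A)$ itself.

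For part (b), specialize $n=0$, so $\Omega^0(V^{P_i})=\mc O(V^{P_i})$ and the HKR map $\Omega$ is just evaluation of traces: $HH_0(\mc F_A)$ sends $\omega\in HH_0(A)$ to the tuple $\big(v_i\mapsto \mr{tr}\,\pi(P_i,\delta_i,v_i)(\omega)\big)_i$. By part (a) this is an injection of $HH_0(A)$ onto a subspace of $\bigoplus_i\mc O(V^{P_i})$, and I would identify that subspace as the stated relation space. One inclusion is immediate: if $\sum_j\lambda_j\pi(P_{i_j},\delta_{i_j},v_{i_j})=0$ in $\C\otimes_\Z R(A)$ then applying the trace pairing \eqref{eq:2.42} with any $\omega\in HH_0(A)$ gives $\sum_j\lambda_j\omega_{i_j}(v_{i_j})=0$, so every element of the image satisfies the relations. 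For the reverse inclusion, suppose $\omega=\sum_i\omega_i$ satisfies all these relations. The key point is that the $\pi(P_i,\delta_i,v_i)$ span $\C\otimes_\Z R(A)$, so there is a well-defined linear functional $f$ on $\C\otimes_\Z R(A)$ determined by $f(\pi(P_i,\delta_i,v_i))=\omega_i(v_i)$ — well-definedness is exactly the relation hypothesis. This $f$ is regular in the sense of \eqref{eq:2.44}: restricted to any one family $\mf F(P_i,\delta_i)$ it equals $\omega_i$, a regular function on $V^{P_i}$, and any algebraic family decomposes, via Lemma \ref{lem:2.12}, into such pieces. Now I invoke the identification $R(A)\subset HH_0(A)^*$ from \eqref{eq:2.14} together with the description of $HH_0(A)$ as module over $\mc O(V)^G$ from Theorem \ref{thm:2.1}: I must produce $\omega'\in HH_0(A)$ pairing with each $\pi(P_i,\delta_i,v_i)$ to give $\omega_i(v_i)$, which by the already-established bijectivity of $HH_0(\mc F_A)$ onto its (relation-cut-out) image amounts to checking that $f$, viewed through the $\nu_{g,v}$ basis of $(\C\otimes_\Z R^0(A))$ supplied by Lemma \ref{lem:2.5}, has coefficients that patch into an element of $\bigoplus_g(\mc O(V^g)\otimes\natural^g)^{Z_G(g)}$. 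The regularity and $G$-equivariance \eqref{eq:2.16} of the $\nu_{g,v}$-coordinates of $f$ deliver exactly that. Finally, to get the isomorphism $HH_0(A)\cong(\C\otimes_\Z R(A))^*_{\mr{reg}}$ of $Z(A)$-modules, I note the map \eqref{eq:2.43} lands in $(\C\otimes_\Z R(A))^*_{\mr{reg}}$, is injective by linear independence of irreducible characters, and the surjectivity is precisely the reverse inclusion just argued (a regular $f$ restricts on each family to a regular function, giving a tuple satisfying all relations, hence an element of $HH_0(A)$ by part (a)); $Z(A)$-linearity is clear since $Z(A)$ acts on both sides through its action on $R(A)$ and on $HH_0$.

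The main obstacle I anticipate is the reverse inclusion in (b): turning the bare relation hypothesis on $\omega$ into an actual Hochschild $0$-cycle. The delicate point is that the families $\mf F(P_i,\delta_i)$ were chosen to span $\C\otimes_\Z R(A)$ only in a minimal way, so $f$ is determined on the spanning set but one must check the induced $\nu_{g,v}$-coordinates vary regularly and $Z_G(g)$-equivariantly in $v$ — this is where Lemma \ref{lem:2.7} (expressing $\nu_{g,v}$ through the $\phi_{g,i}$) and Lemma \ref{lem:2.9} (injectivity of $HH_n(\phi^*)$) do the real work of inverting $HH_0(\mc F_A)$ back onto the $\bigoplus_g(\mc O(V^g)\otimes\natural^g)^{Z_G(g)}$ picture. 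A secondary subtlety is bookkeeping the relation ``$\sum_j\lambda_j\pi(P_{i_j},\delta_{i_j},v_{i_j})=0$'' in $\C\otimes_\Z R(A)$ versus the looser equivalence (same trace) used when regarding a family as living in $R(A)$; one should phrase the relation space using genuine equality in $\C\otimes_\Z R(A)$, which is consistent with the pairing \eqref{eq:2.42} since that pairing factors through $R(A)$, so no difficulty arises beyond being careful with definitions.
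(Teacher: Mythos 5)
Your proposal is correct and follows essentially the same route as the paper: part (a) is obtained by combining Proposition \ref{prop:2.8} (the composite $HH_n(\phi^*)\circ HH_n(\mc F_A)$ is the identity on $\bigoplus_g (\Omega^n(V^g)\otimes\natural^g)^{Z_G(g)}$, with irrelevant $g$ contributing zero) with the injectivity of $HH_n(\phi^*)$ from Lemma \ref{lem:2.9}, and part (b) by translating the relation condition on $\omega$ into $Z_G(g)$-equivariance and regularity of $HH_0(\phi^*)\omega$ via the $\nu_{g,v}$ and \eqref{eq:2.16}, then identifying the result with $(\C\otimes_\Z R(A))^*_{\mr{reg}}$ through the pairing \eqref{eq:2.43}. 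The subtleties you flag at the end are exactly the ones the paper resolves with Lemmas \ref{lem:2.5} and \ref{lem:2.7}, so no gap remains.
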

\begin{proof}
(a) Proposition \ref{prop:2.8} entails that $HH_n (\phi^*) \circ HH_n (\mc F_A)$ is the identity on
\[
\bigoplus\nolimits_{g \in \langle G \rangle} (\Omega^n (V^{g}) \otimes \natural^{g} )^{Z_G (g)}.
\]
Hence $HH_n (\mc F_A)$ is injective, and from Lemma \ref{lem:2.9} we obtain the desired bijectivity 
of $HH_n (\phi^*)$.\\
(b) The image of $HH_0 (\phi^*) \circ HH_0 (\mc F_A)$ is 
\begin{equation}\label{eq:2.17}
\bigoplus\nolimits_{g \in \langle G \rangle} (\mc O (V^{g}) \otimes \natural^{g} )^{Z_G (g)}.
\end{equation}
The map associated to $\omega \in \bigoplus_{i=1}^{n_{\mf F}} \mc O (V^{P_i})$ in the statement sends 
\begin{equation}\label{eq:2.37}
\nu_{g,v} \quad \text{to} \quad 
\sum\nolimits_{i=1}^{n_{\mc F}} \omega_i (\phi_{g,i}(v)) = (HH_0 (\phi^*) \omega )(g,v) .
\end{equation}
The $\nu_{g,v}$ satisfy the relations \eqref{eq:2.16}, so $\omega$ must respect these in order to
descend to a function on $\C \otimes_\Z R(A)$. In view of \eqref{eq:2.37}, that means that
$HH_0 (\phi^*) \omega$ must be $Z_G (g)$-invariant. By construction the image of $HH_0 (\phi^*)$
consists of regular functions, so $\omega$ must belong to \eqref{eq:2.17}.

On the other hand, from Lemma \ref{lem:2.3} we know that the maximal ideal spectrum of \eqref{eq:2.17} 
is in bijection with the set of all $\nu_{g,v}$, modulo the relations \eqref{eq:2.16}. By
\eqref{eq:2.12} the resulting quotient set forms a basis of $\C \otimes_\Z R(A)$.
Hence \eqref{eq:2.17} can be considered as a subset of the linear dual space 
$(\C \otimes_\Z R(A))^*$ and $HH_0 (\phi^*)^{-1}$ of \eqref{eq:2.17} is the set of all
elements that satisfy the conditions stated in the theorem.

For $a \in A$ and $\omega = HH_0 (\mc F_A)(a)$, the definition of the generalized trace map gtr
shows that
\begin{equation}\label{eq:2.45}
\omega (\pi (P_i,\delta_i,v_i)) = \mr{tr} \, (\pi (P_i,\delta_i,v_i) a) .
\end{equation}
Hence the map
\[
HH_0 (A) \to HH_0 (\mc F_A) HH_0 (A) \to (\C \otimes_\Z R (A))^*
\]
constructed above is just the $Z(A)$-linear map \eqref{eq:2.43}. It is injective because 
$HH_0 (\mc F_A)$ is injective and the values \eqref{eq:2.45} can be recovered from the image
of $\omega$ in $(\C \otimes_\Z R (A))^*$. We know from \eqref{eq:2.44} that the image of
\eqref{eq:2.43} is contained in $(\C \otimes_\Z R (A))^*_{\mr{reg}}$. Conversely every
element $f \in (\C \otimes_\Z R (A))^*_{\mr{reg}}$ yields a regular function on $V^{P_i}$ via
pairing with $\mf F_{P_i,\delta_i}$, so $f$ comes from an element of 
$\bigoplus\nolimits_{i=1}^{n_{\mf F}} \Omega^n (V^{P_i})$.
\end{proof}

With the equality $\nu_{h g h^{-1},hv} = \natural^g (h) \nu_{g,v}$ from \eqref{eq:2.16} 
we can extend Lemma \ref{lem:2.7} from $g \in \langle G \rangle$ to all
$g \in G$. Namely, for $g \in \langle G \rangle$ and $h \in G$ we define
\begin{equation}\label{eq:2.35}
\lambda_{h g h^{-1},i} = \natural^g (h) \lambda_{g,i} \quad \text{and} \quad
\phi_{h g h^{-1},i} = \phi_{g,i} \circ h^{-1} .
\end{equation}
That yields a variation on \eqref{eq:2.30}:
\begin{equation}\label{eq:2.36}
HH_n (\tilde \phi^*) := \bigoplus_{g \in G} \sum_{i = 1}^{n_{\mf F}}
\lambda_{g,i} HH_n (\phi_{g,i}^*) \;:\; \bigoplus_{i=1}^{n_{\mf F}} \Omega^n (V^{P_i})
\to \bigoplus_{g \in G} \Omega^n (V^g),
\end{equation}
We note that this map hardly differs from $HH_n (\phi^*)$, because it is entirely determined
by the components indexed by $g \in \langle G \rangle$ via the actions from \eqref{eq:2.32}.
From Theorem \ref{thm:2.6} and \eqref{eq:2.33} we conclude:

\begin{cor}\label{cor:2.11}
There is a $\mc O (V)^G$-linear bijection
\[
HH_n (\tilde \phi^*) \circ HH_n (\mc F_A) \;:\; HH_n (A) \to 
\Big( \bigoplus\nolimits_{g \in G} \Omega^n (V^g) \otimes \natural^g \Big)^G .
\]
\end{cor}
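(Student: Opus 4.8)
The plan is to assemble Corollary \ref{cor:2.11} directly from the pieces already in place, since almost all the work has been done in Theorem \ref{thm:2.6} and the surrounding discussion. First I would recall that Theorem \ref{thm:2.6}(a) already gives a $\mc O (V)^G$-linear bijection
\[
HH_n (\phi^*) \circ HH_n (\mc F_A) : HH_n (A) \to \bigoplus\nolimits_{g \in \langle G \rangle} \big( \Omega^n (V^g) \otimes \natural^g \big)^{Z_G (g)} .
\]
So the only thing left is to identify the target $\bigoplus_{g \in \langle G \rangle} (\Omega^n (V^g) \otimes \natural^g)^{Z_G (g)}$ with $\big( \bigoplus_{g \in G} \Omega^n (V^g) \otimes \natural^g \big)^G$ in a way that matches the passage from $HH_n (\phi^*)$ to $HH_n (\tilde \phi^*)$. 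That identification is exactly \eqref{eq:2.33}, which in turn rests on Lemma \ref{lem:2.10} and the canonical bijection \eqref{eq:2.32}: an element of $\big( \bigoplus_{g \in G} \Omega^n (V^g) \otimes \natural^g \big)^G$ is determined by its components at a set $\langle G \rangle$ of conjugacy-class representatives, each such component lying in $(\Omega^n (V^g) \otimes \natural^g)^{Z_G (g)}$, and conversely any such tuple extends uniquely to a $G$-invariant element via the action \eqref{eq:2.32}.

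The key step is then to check that this extension isomorphism intertwines $HH_n (\phi^*)$ with $HH_n (\tilde \phi^*)$. This is precisely what the definitions \eqref{eq:2.35} were set up to guarantee: for $g \in \langle G \rangle$ and $h \in G$ we put $\lambda_{h g h^{-1},i} = \natural^g (h) \lambda_{g,i}$ and $\phi_{h g h^{-1},i} = \phi_{g,i} \circ h^{-1}$, so the component of $HH_n (\tilde \phi^*)$ indexed by $h g h^{-1}$ is obtained from the component indexed by $g$ by precomposing with $h^{-1}$ and multiplying by $\natural^g (h)$ — which is exactly the map \eqref{eq:2.32} read as the action of $h \in G$. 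Thus $HH_n (\tilde \phi^*)$ is the composite of $HH_n (\phi^*)$ with the canonical extension $\bigoplus_{g \in \langle G \rangle} (\cdots)^{Z_G (g)} \xrightarrow{\sim} \big( \bigoplus_{g \in G} \Omega^n (V^g) \otimes \natural^g \big)^G$, and since both factors are $\mc O (V)^G$-linear bijections, so is the composite $HH_n (\tilde \phi^*) \circ HH_n (\mc F_A)$.

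The one point that needs a little care — and the place I expect the only genuine friction — is verifying that \eqref{eq:2.35} is consistent, i.e.\ that the prescription $\lambda_{h g h^{-1},i}, \phi_{h g h^{-1},i}$ does not depend on the choice of $h$ with a given conjugate $h g h^{-1}$, and that the resulting tuple actually lands in $\big( \bigoplus_{g \in G} \Omega^n (V^g) \otimes \natural^g \big)^G$ rather than merely reproducing the $\langle G \rangle$-components. Consistency amounts to the cocycle identity $\natural^g (\tilde h h) = \natural^{h g h^{-1}}(\tilde h)\, \natural^g (h)$ from Lemma \ref{lem:2.10}(a) (applied with $\tilde h$ in the relevant centralizer, where $\natural^g$ is a character), together with the fact that $\phi_{g,i}$ is unique up to an element of $G$ stabilizing $\mf F (P_i,\delta_i)$, as established in the proof of Lemma \ref{lem:2.7}; the $G$-invariance of the output is then immediate from the way \eqref{eq:2.32} was defined to intertwine the $Z_G (g)$-actions. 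Once this bookkeeping is recorded, the corollary follows by combining Theorem \ref{thm:2.6}(a) with \eqref{eq:2.33}, exactly as indicated in the sentence preceding the statement.
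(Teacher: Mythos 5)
Your proposal is correct and follows exactly the route the paper takes: the paper's entire proof is the remark that $HH_n(\tilde\phi^*)$ is determined by its components over $\langle G\rangle$ via the actions \eqref{eq:2.32}, so that Corollary \ref{cor:2.11} is just Theorem \ref{thm:2.6}.a composed with the identification \eqref{eq:2.33}. Your extra paragraph checking that \eqref{eq:2.35} is consistent (via Lemma \ref{lem:2.10}.a and the equivariance relations \eqref{eq:2.16}) makes explicit a point the paper leaves implicit, and is a sound way to justify it.
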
 
This realizes the isomorphism Theorem \ref{thm:2.1} in terms of algebra homomorphisms.

\begin{ex}\label{ex:2.B}
We continue Example \ref{ex:2.A}, so $G = Q_8 / Z(Q_8)$ acts on $V = \C^2$ by reflections. 
We note that $V^{\pm \mb k} = \{(0,0)\}, Z_G (\pm \mb{k},V^{\pm \mb k}) = G$ and 
$\natural^{\pm \mb k}$ is nontrivial. Hence $\pm \mb{k}$ is $HH (A)$-irrelevant, where
$A = \mc O (V) \rtimes \C [G,\natural]$.

As parabolic subgroups we take 
\[
G_\emptyset = \{ \pm \mb{1} \} ,\; G_{\mb i} = \{ \pm \mb{1}, \pm \mb{i} \} ,\;
G_{\mb j} = \{ \pm \mb{1}, \pm \mb{j} \} ,\; G_{\mb i,\mb j} = G .
\]
In each case $V^P = V^{G_P}$ and $V_P$ is the orthogonal complement to $V^P$. To span 
$\Q \otimes_\Z R(A)$ we need three algebraic families of representations, for instance:
\begin{itemize}
\item $\mf F (\emptyset ,\mr{triv} ) = \{ \mr{ind}_{\mc O (V)}^A \C_v : v \in V \}$.
\item Define $\delta_{\mb i} \in \Irr (\mc O (V_{\mb i}) \rtimes \C [G_{\mb i},\natural]$ by
$\delta_{\mb i}(T_{\pm \mb i}) = i$ and $\delta_i (f) = f(0)$ for $f \in \mc O (V_{\mb i})$.
Take
\[
\mf F (\mb i, \delta_{\mb i}) = \{ \mr{ind}_{\mc O (V) \rtimes \C [G_{\mb i},\natural]}^A 
(\C_v \otimes \delta_{\mb i}) : v \in V^{\mb i} = \{0\} \times \C \}
\]
\item Define $\delta_{\mb j} \in \Irr (\mc O (V_{\mb j}) \rtimes \C [G_{\mb j},\natural]$ by
$\delta_{\mb j}(T_{\pm \mb i}) = -i$ and $\delta_j (f) = f(0)$ for $f \in \mc O (V_{\mb j})$.
Take
\[
\mf F (\mb j, \delta_{\mb j}) = \{ \mr{ind}_{\mc O (V) \rtimes \C [G_{\mb j},\natural]}^A 
(\C_v \otimes \delta_{\mb j}) : v \in V^{\mb j} = \{0\} \times \C \}
\]
\end{itemize}
Composing $\C_v \otimes \delta_{\mb i}$ with conjugation by $T_{\pm \mb j}$ gives
\[
\C_{-v} \otimes \delta_{-\mb i} : f \otimes T_{\pm \mb i} \mapsto -i f(-v) .
\]
Hence $\mf F (\mb i,\delta_{\mb i})$ is not stable under elements of $G \setminus G_{\mb i}$,
and similarly for $\mf F (\mb j, \delta_{\mb j})$. The only relations in $\Q \otimes_\Z R(A)$
between the members of these families are:
\begin{align*}
& \pi (\emptyset, \mr{triv}, v_{\mb i}) = \pi (\mb i, \delta_{\mb i}, v_{\mb i}) + 
\pi (\mb i, \delta_{-\mb i}, v_{\mb i}) = \pi (\mb i, \delta_{\mb i}, v_{\mb i}) + 
\pi (\mb i, \delta_{\mb i}, -v_{\mb i}) \qquad v_{\mb i} \in V^{\mb i}, \\
& \pi (\emptyset, \mr{triv}, v_{\mb j}) = \pi (\mb j, \delta_{\mb j}, v_{\mb j}) + 
\pi (\mb j, \delta_{-\mb j}, v_{\mb j}) = \pi (\mb j, \delta_{\mb j}, v_{\mb j}) + 
\pi (\mb j, \delta_{\mb j}, -v_{\mb j}) \qquad v_{\mb j} \in V^{\mb j}, \\
& \pi (\emptyset, \mr{triv},0) = 2 \pi (\mb i, \delta_{\mb i}, 0 ) =
2 \pi (\mb j, \delta_{\mb j}, 0) .
\end{align*}
Comparing traces of representations we find
\begin{align*}
& 4 \nu_{\pm \mb{1},v} = \pi (\emptyset, \mr{triv},v)  \qquad v \in V ,\\
& 4 \nu_{\pm \mb{i},v_{\mb i}} = -2i \pi (\mb i, \delta_{\mb i},v_{\mb i}) +
i \pi (\emptyset, \mr{triv},v_{\mb i}) \qquad v_{\mb i} \in V^{\mb i},\\
& 4 \nu_{\pm \mb{j},v_{\mb j}} = -2i \pi (\mb j, \delta_{\mb j},v_{\mb j}) +
i \pi (\emptyset, \mr{triv},v_{\mb j}) \qquad v_{\mb j} \in V^{\mb j}.
\end{align*}
Notice that $\nu_{\pm \mb{i},0} = \nu_{\pm \mb{j},0} = 0 \neq \nu_{\pm \mb{1},0}$.
In case $V^g \subset V^{\mb j}$, $\phi_{g,\mb j}$ equals
\[
\mr{Res}^{V^{\mb j}}_{V^g} : \mc O (V^{\mb j}) \otimes
\End_\C (\C [G,\natural] \underset{\C[G_{\mb j},\natural]}{\otimes} \C) \to \mc O (V^g) \otimes
\End_\C (\C [G,\natural] \underset{\C[G_{\mb j},\natural]}{\otimes} \C).
\]
The maps $\phi_{g,\emptyset}$ and $\phi_{g,\mb i}$ admit similar descriptions (for the latter
provided that $V^g \subset V^{\mb i})$. Thus $HH_n (\phi^*)$ equals
\[
\sum\nolimits_{g \in G} \frac{1}{4} HH_n (\phi_{g,\emptyset}) + \frac{i}{4} 
HH_n (\phi_{\pm \mb{1},\mb i}) - \frac{i}{2} HH_n (\phi_{\pm \mb{i},\mb i}) - 
\frac{i}{4} HH_n (\phi_{\pm \mb{1},\mb j}) + \frac{i}{2} HH_n (\phi_{\pm \mb{j},\mb j}) .
\]
Theorem \ref{thm:2.6} provides an injection
\[
HH_n (\mc O (V) \rtimes \C [G,\natural]) \to \Omega^n (V) \oplus \Omega^n (V^{\mb i})
\oplus \Omega^n (V^{\mb j})
\]
whose image is precisely
\[
HH_n (\phi^* )^{-1} \Big( \Omega^n (V)^G \oplus ( \Omega^n (V^{\mb i}) \otimes \natural^{\pm \mb i}
)^G \oplus (\Omega^n (V^{\mb j}) \otimes \natural^{\pm \mb j} )^G \Big) .
\]
We note that in degree $n=1$ the specialization of $HH_1 (A)$ at $Gv = (0,0)$ is 
$0 \oplus \C \textup{d}z_2 \oplus \C \textup{d}z_1$. Remarkably, the dimension of this space is
larger than the number of irreducible $A$-representations with $\mc O (V)^G$-character $(0,0)$.\\
\end{ex}

Almost all results in Section \ref{sec:crossed} are also valid in a smooth 
setting. Let $V$ be a smooth manifold on which $G$ acts by diffeomorphisms, so that $G$ also acts on 
$C^\infty (G)$. We compute the Hochschild homology of $C^\infty (V) \rtimes \C [G,\natural]$,
with respect to the completed bornological tensor product or equivalently the completed projective 
tensor product. Recall that the smooth version of the HKR theorem was proved by Connes: 
\[
HH_n (C^\infty (V)) \cong \Omega_{sm}^n (V) ,
\]
where $\Omega_{sm}^n$ stands for smooth differential forms of degree $n$. The results of Paragraph
\ref{par:HHcrossed} hold in that setting, for Paragraphs \ref{par:famCrossed}--\ref{par:realiz}
our results remain valid in a smooth setting with $V$ a real vector space.

\section{Twisted graded Hecke algebras}
\label{sec:HHGHA}

We will adapt the computations from Paragraph \ref{par:HHcrossed} to graded Hecke algebras extended 
with a twisted group algebra. Let $(X,\Phi,Y,\Phi^\vee,\Delta)$ be a based root datum
with Weyl group $W = W(\Phi)$. We write
\[
\mf t_\R = \R \otimes_Z Y, \mf t = \C \otimes_\Z Y ,\qquad \mf t^* = \C \otimes_\Z X .
\]
Let $\Gamma$ be a finite group acting on the root datum and let $\natural : \Gamma \times
\Gamma \to \C^\times$ be a 2-cocycle. We regard it also as a 2-cocycle of the group
$W \Gamma = W \rtimes \Gamma$. Let $k : \Delta \to \C$ be a $W\Gamma$-invariant parameter
function. The twisted graded Hecke algebra $\mh H (\mf t, W\Gamma,k,\natural)$ associated
to these data is the vector space $\mc O (\mf t) \otimes \C [W\Gamma,\natural]$ with
multiplication defined by
\begin{itemize}
\item $\mc O (\mf t)$ and $\C [W\Gamma,\natural]$ are embedded as unital subalgebras,
\item for $\alpha \in \Delta$ and $f \in \mc O (\mf t)$:
\[
f T_{s_\alpha} - T_{s_\alpha} s_\alpha (f) = k_\alpha (f - s_\alpha (f)) \alpha^{-1} ,
\]
\item for $\gamma \in \Gamma$ and $f \in \mc O (\mf  t)$: 
\[
T_\gamma f T_\gamma^{-1} = \gamma (f) = 
[\lambda \mapsto f (\gamma^{-1} \lambda)] \qquad \lambda \in \mf t .
\]
\end{itemize}
When $\natural$ is trivial, we omit it from the notation and we speak of a graded
Hecke algebra (or an extended grade Hecke algebra if $\Gamma$ is nontrivial). This relates 
to the notation in the introduction by
\[
\mh H (\mf t, W\Gamma,k,\natural) = \mh H (\mf t,W,k) \rtimes \C[\Gamma,\natural] .
\]
Notice that for $k = 0$ we recover the twisted crossed product $\mc O (\mf t) \rtimes 
\C [W\Gamma,\natural]$. %Twisted graded Hecke algebras with $k : \Delta \to \R_{\geq 0}$
%appear in abundance in the representation theory of reductive $p$-adic groups (and most
%of the time the twist by the 2-cocycle $\natural$ is trivial).
Multiplication with $\epsilon \in \C^\times$ defines a bijection
$m_\epsilon : \mf t^* \to \mf t^*$, which extends to an algebra automorphism of $\mc O (\mf t)$.
From the above multiplication rules we see that it extends even further, to an algebra
isomorphism
\begin{equation}\label{eq:4.1}
m_\epsilon : \mh H (\mf t, W\Gamma,\epsilon k,\natural) \to
\mh H (\mf t, W\Gamma,k,\natural) 
\end{equation}
which is the identity on $\C [W\Gamma,\natural]$. For $\epsilon = 0$ the homomorphism
$m_0$ is well-defined, but not bijective.
Like in \eqref{eq:2.20}, let 
\[
1 \to Z \to \tilde \Gamma \to \Gamma \to 1
\]
be a finite central extension such that $\natural$ becomes trivial in $H^2 (\tilde \Gamma ,\C^\times)$,
and let $p_\natural \in \C [Z]$ be the associated minimal central idempotent. Then
\[
\mh H (\mf t, W\Gamma,k,\natural) \cong p_\natural \mh H (\mf t, W \tilde \Gamma,k) ,
\]
a direct summand of the extended graded Hecke algebra
\begin{equation}\label{eq:4.2}
\mh H (\mf t, W \tilde \Gamma,k) = \mh H (\mf t, W,k ) \rtimes \tilde \Gamma .
\end{equation}
The Hochschild homology of \eqref{eq:4.2} was computed in \cite[Theorem 3.4]{SolHomGHA}. 
It is isomorphic to $HH_* (\mc O (\mf t) \rtimes W \tilde \Gamma)$, which we already know 
from \eqref{eq:2.2}. The arguments in \cite{SolHomGHA} make use of the subcomplexes
\begin{equation}\label{eq:4.3}
w C_* (\mc O (\mf t / (w-1) \mf t)) \quad \text{of} \quad C_* (\mh H (\mf t,W \tilde \Gamma,k)) .
\end{equation}
For each $w \in \langle W \tilde \Gamma \rangle$, this subcomplex contributes precisely
$\Omega^n (\mf t^w )^{Z_{W \tilde \Gamma} (w)}$ to\\ $HH_n (\mh H (\mf t,W \tilde \Gamma,k))$,
and $HH_n (\mh H (\mf t,W \tilde \Gamma,k))$ is the direct sum of these contributions.
This works for every parameter function $k$, and in particular yields a canonical $\C$-linear
bijection
\begin{equation}\label{eq:4.4}
HH_n (\mh H (\mf t,W \tilde \Gamma,k)) \to HH_n (\mc O (\mf t) \rtimes W \tilde \Gamma) .
\end{equation}
The constructions involved in \eqref{eq:4.4} affect neither $\C [Z]$ nor the central
idempotent $p_\natural$. Hence 
\begin{multline}\label{eq:4.5}
HH_n ( \mh H (\mf t, W\Gamma,k,\natural)) \cong HH_n ( p_\natural \mh H (\mf t, W \tilde \Gamma,k))
= p_\natural HH_n ( \mh H (\mf t, W \tilde \Gamma,k)) \cong \\
p_\natural HH_n (\mc O (\mf t) \rtimes W \tilde \Gamma) = HH_n (p_\natural \mc O (\mf t) 
\rtimes W \tilde \Gamma) \cong HH_n (\mc O (\mf t) \rtimes \C [W \tilde \Gamma,\natural]) .
\end{multline}
The second line of \eqref{eq:4.5} is an instance of Theorem \ref{thm:2.1}. We conclude that there
is an isomorphism of vector spaces
\begin{equation}\label{eq:4.6}
HH_n ( \mh H (\mf t, W\Gamma,k,\natural)) \cong \bigoplus\nolimits_{w \in \langle W \Gamma \rangle}
\big( \Omega^n (\mf t^w) \otimes \natural^w \big)^{Z_{W \Gamma}(w)} , 
\end{equation}
where the summand indexed by $w$ arises from the differential complex\\
$T_w C_* (\mf t / (w-1) \mf t)$, which does not depend on $k$.

Although the isomorphism \eqref{eq:4.6} is in general not  canonical, we see from the proofs 
of \cite[Theorem 3.4]{SolHomGHA} and Theorem \ref{thm:2.1} that it depends only on some choices in 
$\C [W\tilde \Gamma ,\natural]$. These choices can be made independently of $k$, so \eqref{eq:4.6} 
provides an identification with $HH_n ( \mh H (\mf t, W\Gamma,k',\natural))$ for any parameter
function $k' : \Delta \to \C$.  Unfortunately this isomorphism does not come from an algebra 
homomorphism, which makes it difficult to handle.

We warn that usually \eqref{eq:4.6} is not an isomorphism of $\mc O (\mf t)^{W \Gamma}$-modules, 
for the $\mc O (\mf t)^{W \Gamma}$-module structure on $HH_n ( \mh H (\mf t, W\Gamma,k,\natural))$ 
is a bit more complicated than suggested by \eqref{eq:4.6}.

\subsection{Representation theory} \ 

Like in Theorem \ref{thm:2.6} we want to obtain an expression for $HH_n ( \mh H (\mf t, W\Gamma,
k,\natural))$ in terms of algebraic families of representations. For $\mh H (\mf t, W,\natural) 
\rtimes  \tilde \Gamma$ that was achieved in \cite[Theorem 3.1]{SolHomAHA}.
However, the families of (virtual) representations used in \cite{SolHomAHA} do not seem to
be available in our more general setting with a nontrivial 2-cocycle. To overcome that we will
modify some arguments from \cite{SolHomAHA}, so that they become available in larger generality.

Firstly, we need to specify our parabolic subalgebras. For every $P \subset \Delta$ there is a
standard parabolic subalgebra $\mh H (\mf t,W_P,k)$ of $\mh H (\mf t,W,k)$. But that does not
yet mimic the situation for reductive groups well enough. To that end we need to allow
several parabolic subalgebras with underlying root datum $(X,\Phi_P,Y,\Phi_P^\vee,P)$, namely
of the form $\mh H (\mf t, W_P \rtimes \Gamma')$ where $\Gamma' \subset \Gamma$ stabilizes $P$.
More precisely, we assume that we are given a finite set $\Delta'$ with a surjection to
$\Delta$ (written as $Q \mapsto \Delta_Q$) and for each $Q \subset \Delta'$ a subgroup
$\Gamma_Q \subset \Gamma$ stabilizing $\Delta_Q$. We abbreviate the group
$W_{\Delta_Q} \rtimes \Gamma_Q$ to $(W \Gamma)_Q$. Furthermore, we assume that the collection
of parabolic subalgebras 
\[
\mh H^Q = \mh H (\mf t, (W \Gamma)_Q, k, \natural) \quad \text{of} \quad
\mh H = \mh H (\mf t, W \Gamma, k, \natural)
\]
satisfies the conditions on page \pageref{eq:2.13}. Here the role of $V$ is played by the
vector space $\mf t$ and
\[
\mf t^Q \subset \mf t^{(W \Gamma)_Q} ,\qquad \C \Delta_Q \subset \mf t_Q ,\qquad
\mf t^Q \oplus \mf t_Q = \mf t .
\]
Let us abbreviate $\mh H_Q = \mh H (\mf t_Q, (W \Gamma)_Q, k, \natural)$, so that
\[
\mh H^Q = \mc O (\mf t^Q) \otimes_\C \mh H_Q \quad \text{as algebras}.
\] 
With the following result we will reduce several issues for $\mh H$ to the simpler algebra
\[
\mh H (\mf t,W \Gamma,0,\natural) = \mc O (\mf t) \rtimes \C[W \Gamma, \natural].
\]

\begin{thm}\label{thm:4.1}
Assume that $k_\alpha \in \R$ for every $\alpha \in \Delta$, and let $\epsilon \in \R_{\geq 0}$.
There exists a natural bijection
\[
\zeta_\epsilon : \Q \otimes_\Z R (\mh H (\mf t, W\Gamma,k,\natural) ) \to 
\Q \otimes_\Z R (\mh H (\mf t, W \Gamma, \epsilon k, \natural)) ,
\]
and similarly for all its parabolic subalgebras, with the following properties:
\begin{enumerate}[(i)]
\item $\zeta_\epsilon (\pi)$ is a tempered virtual $\mh H (\mf t, W \Gamma, \epsilon k, \natural)
$-representation if and only if $\pi$ is a tempered virtual $\mh H$-representation;
\item $\zeta_\epsilon$ commutes with parabolic induction, in the sense that
\[
\zeta_\epsilon \big( \mr{ind}_{\mh H^Q}^{\mh H} (\C_\lambda \otimes \sigma) \big) =
\mr{ind}_{\mh H (\mf t, (W \Gamma)_Q, \epsilon k, \natural)}^{\mh H (\mf t, W \Gamma, 
\epsilon k, \natural)} \big( \C_\lambda \otimes \zeta_\epsilon (\sigma) \big)
\]
for a tempered $\sigma \in R(\mh H_Q)$ and $\lambda \in \mf t^Q$;
\item if $\lambda \in \sqrt{-1} \mf t_\R$ and $\pi$ is a virtual representation with
$\mc O (\mf t)^{W\Gamma}$-character in $W \Gamma \lambda + \mf t_\R$, then so is
$\zeta_\epsilon (\pi)$;
\item if $\pi$ is tempered and admits a $\mc O (\mf t)^{W\Gamma}$-character in 
$\mf t_\R$, then $\zeta_\epsilon (\pi) = \pi \circ m_\epsilon$, with $m_\epsilon$
as in \eqref{eq:4.1};
\item $\zeta_\epsilon$ preserves the underlying $\C [W\Gamma,\natural]$-representations.
\end{enumerate}
\end{thm}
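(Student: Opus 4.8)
The plan is to remove the $2$-cocycle first, by the central extension of \eqref{eq:4.2}, and then to build $\zeta_\epsilon$ for a genuine extended graded Hecke algebra out of the Langlands classification together with the Harish--Chandra description of the tempered dual, arranged so that $\zeta_\epsilon$ rescales the real part of every central character while leaving the imaginary part untouched and acts as $-\circ m_\epsilon$ on discrete series with real central character. Most of (i)--(v) is then built into the construction; the substantive points are well-definedness and, above all, bijectivity. For the reduction: with $1\to Z\to\tilde\Gamma\to\Gamma\to 1$ and $p_\natural\in\C[Z]$ as above, \eqref{eq:4.2} gives $\mh H(\mf t,W\Gamma,k,\natural)\cong p_\natural\,\mh H(\mf t,W\tilde\Gamma,k)$, compatibly for the parameter functions $k$ and $\epsilon k$ and for all parabolic subalgebras, and a finite length $\mh H(\mf t,W\Gamma,k,\natural)$-module is exactly a finite length $\mh H(\mf t,W\tilde\Gamma,k)$-module on which $Z$ acts through $\chi_\natural$; this identification respects temperedness, parabolic induction and $\mc O(\mf t)^{W\Gamma}$-characters. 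So it suffices to construct a natural $\zeta_\epsilon$ with properties (i)--(v) for $\mh H(\mf t,W\tilde\Gamma,k)$ and all its parabolic subalgebras: property (v) guarantees $Z$ still acts by $\chi_\natural$ on $\zeta_\epsilon(\pi)$, so the $p_\natural$-part yields the map in the twisted case. Henceforth $\natural=1$.

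For the construction, note that for $\epsilon>0$ the homomorphism $m_\epsilon:\mh H(\mf t,W\Gamma,\epsilon k)\to\mh H(\mf t,W\Gamma,k)$ of \eqref{eq:4.1} is an algebra isomorphism, but $-\circ m_\epsilon$ multiplies the whole central character by $\epsilon$ and so fails (iii); the remedy is to separate the imaginary part. I would invoke the Langlands classification for extended graded Hecke algebras --- every irreducible $\mh H$-module is a Langlands quotient $L(Q,\sigma,\lambda)$ with $Q\subset\Delta'$, $\sigma$ irreducible tempered over $\mh H_Q$, $\lambda\in\mf t^Q$ with $\mr{Re}(\lambda)$ strictly positive on $\Delta_Q^\vee$, the triple unique up to $W\Gamma$-conjugacy --- together with the structure of the tempered dual: up to $W\Gamma$, each irreducible tempered module is a summand of $\mr{ind}_{\mh H_{Q'}}^{\mh H}(\C_\mu\otimes\delta)$ with $\delta$ a discrete series of $\mh H_{Q'}$ (hence with real $\mc O(\mf t_{Q'})^{(W\Gamma)_{Q'}}$-character) and $\mu\in\sqrt{-1}\,\mf t^{Q'}_\R$. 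Then I would define $\zeta_\epsilon$ recursively in $|Q|$: on a discrete series $\delta$ of $\mh H_Q$ with real central character, $\zeta_\epsilon(\delta):=\delta\circ m_\epsilon$ (at $\epsilon=0$, $m_0$ factors through $\C[(W\Gamma)_Q,\natural]$, so this is $\delta|_{\C[(W\Gamma)_Q,\natural]}$ placed at central character $0$); extend to tempered modules by parabolic induction with $\mu$ fixed (property (ii)); and extend to all modules via the Langlands classification, $\zeta_\epsilon(L(Q,\sigma,\lambda)):=L(Q,\zeta_\epsilon(\sigma),\,\epsilon\,\mr{Re}(\lambda)+\sqrt{-1}\,\mr{Im}(\lambda))$, finally $\Q$-linearly to $\Q\otimes_\Z R(\mh H)$. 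Equivalently $\zeta_\epsilon$ is prescribed on the $\Z$-basis of standard modules.

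Properties (i), (ii), (v) are then immediate (note $m_\epsilon$ is the identity on $\C[W\Gamma,\natural]$, which also secures (v) at $\epsilon=0$); (iii) holds because only real parts of central characters are rescaled; (iv) is the defining formula on discrete series together with the identity $(\mr{ind}_{\mh H^Q}^{\mh H}(\C_\lambda\otimes\delta))\circ m_\epsilon=\mr{ind}(\C_{\epsilon\lambda}\otimes(\delta\circ m_\epsilon))$, which makes parabolic induction and the Langlands construction commute with $-\circ m_\epsilon$ on modules with real central character. Naturality --- equivariance for the isomorphisms between parabolic subalgebras coming from $W\Gamma$-conjugacy, and $\zeta_{\epsilon_1\epsilon_2}=\zeta_{\epsilon_1}\circ\zeta_{\epsilon_2}$ --- follows because every ingredient (the Langlands datum, the tempered/discrete classification, $m_\epsilon$) is natural; this is also what makes the construction descend through $p_\natural$. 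Well-definedness reduces to the uniqueness up to $W\Gamma$-conjugacy of the Langlands datum and of the tempered datum (parabolic, discrete series, imaginary twist), which are classical.

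The hard part will be bijectivity of $\zeta_\epsilon$ as a map of $\Q$-vector spaces. For $\epsilon>0$ it is immediate, as $m_\epsilon$ is an isomorphism; the real issue is the degenerate limit $\epsilon\downarrow 0$, i.e.\ the comparison with $\mc O(\mf t)\rtimes\C[W\Gamma,\natural]$. Since $\zeta_\epsilon$ respects the partition of $R$ by the imaginary part of the central character (property (iii)) and commutes with parabolic induction, bijectivity reduces to a rigidity statement that I expect to be the crux: for each residual coset the $\Q$-span inside $\Q\otimes_\Z R(\mh H_Q)$ of the discrete series of $\mh H_Q$ supported there has dimension independent of the scaling of $k$ among non-negative reals, and these virtual characters specialize correctly at $\epsilon=0$ (where a discrete series becomes the corresponding virtual $\C[(W\Gamma)_Q,\natural]$-module at central character $0$, which --- as the $A_1$ case shows --- need not be irreducible). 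This is exactly what the analytic theory of parameter deformations supplies: the residual points of $\mh H_Q(\epsilon k)$ are $\epsilon$ times those of $\mh H_Q(k)$, and the continuity of the tempered dual and of formal degrees under rescaling of real parameters, as in the proof of \cite[Lemma 6.4]{SolHecke} and the Plancherel-theoretic results cited there, shows that the relevant discrete-series multiplicities are locally constant on $[0,1]$ and behave as required in the limit. Feeding this into the fiberwise, recursive description of $\zeta_\epsilon$ gives bijectivity, and the $\tilde\Gamma$- and $\natural$-equivariance then promote everything to the statement of the theorem.
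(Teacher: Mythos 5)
Your reduction of the twisted case to the untwisted one is exactly the paper's argument: pass to $p_\natural \mh H(\mf t, W\tilde\Gamma, k)$ via the central extension, observe that property (v) (which, as you and the paper both note, follows from (ii) and (iv) together with the fact that $m_\epsilon$ is the identity on the twisted group algebra) guarantees that $\zeta_\epsilon$ respects the $Z$-isotypic decomposition and hence restricts to the $p_\natural$-parts, and handle $\epsilon>0$ by exploiting that $m_\epsilon$ is an isomorphism (the paper also offers the composition $(\zeta'_0)^{-1}\zeta_0$, which you implicitly use via $\zeta_{\epsilon_1\epsilon_2}=\zeta_{\epsilon_1}\circ\zeta_{\epsilon_2}$). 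The divergence is in how the untwisted case with $\epsilon=0$ is treated: the paper simply cites \cite[\S 2.3]{SolAHA} and \cite[Theorem 2.4]{SolHomAHA}, whereas you sketch the construction underlying those results (scaling on discrete series with real central character, extension to tempered modules by parabolic induction with fixed imaginary twist, extension to all of $R(\mh H)$ by the Langlands classification). Your sketch is faithful to how those references proceed, but be aware of two points. First, the bijectivity of $\zeta_0$ — which you correctly identify as the crux — is not proven by your appeal to ``continuity of the tempered dual and of formal degrees''; that continuity statement \emph{is} the content of the cited results, established there by analytic methods (Schwartz completions, continuous deformations of intertwining operators), so your argument ultimately rests on the same external input as the paper's. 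Second, defining $\zeta_\epsilon$ on the basis of standard modules does not formally yield property (ii) for arbitrary $\lambda\in\mf t^Q$ (the positivity condition may fail, and linear relations among induced-from-tempered modules coming from R-groups must be shown to be preserved); this compatibility is again part of what the cited references verify. Neither point is a gap relative to what the paper itself proves, but your proposal should be read as a reconstruction of the cited prior work rather than an independent proof of it.
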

\begin{proof}
In \cite[\S 2.3]{SolAHA} and \cite[Theorem 2.4]{SolHomAHA} this was proven for
$\mh H (\mf t, W\Gamma,k)$, with $\epsilon = 0$. Item (v) is not mentioned explicitly there, but 
it is a direct consequence of the properties (ii) and (iv). With that at hand, we can restrict
\[
\zeta_0 : \Q \otimes_\Z R (\mh H (\mf t, W\Gamma,k)) \to
\Q \otimes_\Z R (\mh H (\mf t, W\Gamma,0))
\]
to the image of $p_\natural$ on both sides. That yields the desired map $\zeta_0$.

Now we consider $\epsilon \in \R_{>0}$, which is actually easier, because the two involved
algebras are isomorphic via $m_\epsilon$. This case is not mentioned in \cite{SolAHA} or
\cite{SolHomAHA}, but it can be derived from related results for affine Hecke algebras
\cite[\S 4]{SolAHA} similarly to the case $\epsilon = 0$. Alternatively one can obtain
$\zeta_\epsilon$ as $(\zeta'_0)^{-1} \zeta_0$, where $\zeta'_0$ means $\zeta_0$ for the
algebra $\mh H (\mf t, W \Gamma, \epsilon k, \natural)$.
\end{proof}

To make full use of Theorem \ref{thm:4.1}, we assume from now on that $k_\alpha \in \R$ for all
$\alpha \in \Delta$. We note that the maps $\zeta_\epsilon$ in Theorem \ref{thm:4.1} are well-defined 
and bijective for any $\epsilon \in \C$. Only for $\epsilon \notin \R_{\geq 0}$ they have fewer 
nice properties with respect to temperedness, see \cite[\S 2.2]{SolHomAHA}.

Let $\Rep_{f,t}(\mh H)$ be the category of finite dimensional tempered $\mh H$-modules. For a 
discrete series representation $\delta$ (see \cite[\S 5]{SolHomGHA}) of a parabolic subalgebra 
$\mh H_Q$, let $\Rep_{f,t}^{[Q,\delta]}(\mh H)$ be the full subcategory of $\Rep_f (\mh H)$ 
generated by the subquotients of the representations 
\[
\pi (Q,\delta,\lambda) = \mr{ind}_{\mh H^Q}^{\mh H}(\C_\lambda \otimes \delta) \text{ with }
\lambda \in \sqrt{-1} \mf t^Q_\R .
\] 
Choose a set $\Delta_{\mh H}$ of representatives $\mf d = [Q,\delta]$ for such pairs up to 
$W\Gamma$-equivalence. Define
\[
(W \Gamma)_{\mf d} \text{ as the stabilizer of } (Q,\delta) .
\]

\begin{thm}\label{thm:4.7}
The Grothendieck group $R_t (\mh H)$ of $\Rep_{f,t}(\mh H)$ decomposes a direct sum 
$\bigoplus_{\mf d \in \Delta_{\mh H}} R_t (\mh H)^{\mf d}$.
\end{thm}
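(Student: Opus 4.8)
The plan is to reduce the statement to two elementary facts whose conjunction is exactly the asserted decomposition of the free abelian group $R_t(\mh H)$, with $R_t(\mh H)^{\mf d}$ understood as the subgroup spanned by the classes of the irreducible tempered modules lying in $\Rep_{f,t}^{\mf d}(\mh H)$. These are: \emph{spanning}, that every irreducible tempered $\mh H$-module is a subquotient of some $\pi(Q,\delta,\lambda)$ with $\lambda \in \sqrt{-1}\,\mf t^Q_\R$ and $\delta$ discrete series of a parabolic subalgebra $\mh H_Q$, so that it lies in $\Rep_{f,t}^{[Q,\delta]}(\mh H)$; and \emph{disjointness}, that it lies in $\Rep_{f,t}^{[Q,\delta]}(\mh H)$ for at most one $W\Gamma$-class $\mf d = [Q,\delta] \in \Delta_{\mh H}$. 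Granting these, $R_t(\mh H) = \sum_{\mf d} R_t(\mh H)^{\mf d}$ by spanning, and the sum is direct because by disjointness the bases of the summands are pairwise disjoint subsets of the standard basis of $R_t(\mh H)$.

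Before attacking either fact, I would pass to the untwisted situation. Writing $\mh H \cong p_\natural \mh H(\mf t, W\tilde\Gamma, k)$ as in \eqref{eq:4.5}, $\Rep_f(\mh H)$ is the full subcategory of $\Rep_f(\mh H(\mf t, W\tilde\Gamma, k))$ on which $Z$ acts through $\chi_\natural$; since temperedness and the discrete-series property only constrain the $\mc O(\mf t)$-weights, which are insensitive to the central subgroup $Z$, this identification respects $\Rep_{f,t}$, discrete series of parabolic subalgebras, parabolic induction, and $W\Gamma$-conjugacy (the conjugation action of $W\tilde\Gamma$ factoring through $W\Gamma$). Hence the theorem for $\mh H$ follows from the analogous decomposition for the extended graded Hecke algebra $\mh H(\mf t, W\tilde\Gamma, k)$, once one checks that the classes $\mf d \in \Delta_{\mh H}$ correspond bijectively to the $W\tilde\Gamma$-classes of pairs $(\tilde Q, \tilde\delta)$ on which $Z$ acts through $\chi_\natural$. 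Alternatively one may first apply $\zeta_0$ from Theorem \ref{thm:4.1} to reduce to $k = 0$, i.e. to $\mc O(\mf t) \rtimes \C[W\Gamma, \natural]$, using that $\zeta_0$ is a bijection preserving temperedness, discrete series, parabolic induction and the underlying $\C[W\Gamma,\natural]$-module.

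For spanning I would invoke the tempered Langlands classification for (extended, possibly twisted) graded Hecke algebras: every irreducible tempered module is a summand of some $\pi(Q,\delta,\lambda) = \mr{ind}_{\mh H^Q}^{\mh H}(\C_\lambda \otimes \delta)$ with $\delta$ discrete series of $\mh H_Q$ and $\lambda \in \sqrt{-1}\,\mf t^Q_\R$; this is part of the tempered/Plancherel theory of \cite{SolHomGHA} (see also \cite{SolAHA}), transported to the present setting through Theorem \ref{thm:4.1}. For disjointness the key input is that the induction datum is recovered from $\pi(Q,\delta,\lambda)$ up to $W\Gamma$-conjugacy: for $\lambda$ in a dense open subset of $\sqrt{-1}\,\mf t^Q_\R$ the set of irreducible constituents of $\pi(Q,\delta,\lambda)$ together with their central characters depends only on the $W\Gamma$-orbit of $(Q,\delta,\lambda)$ and conversely determines that orbit — this is the $R$-group description of generic reducibility, the Hecke-algebra analogue of Harish-Chandra's theory of intertwining operators — so two such families with $(Q,\delta)$ and $(Q',\delta')$ not $W\Gamma$-conjugate share no generic constituent.

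I expect the disjointness at \emph{reducible} induced modules to be the main obstacle: one must rule out, for non-generic $\lambda$, that an irreducible constituent of $\pi(Q,\delta,\lambda)$ also occurs in some $\pi(Q',\delta',\lambda')$ with $[Q',\delta']$ not $W\Gamma$-conjugate to $[Q,\delta]$. The plan there is to package the needed information as a well-defined ``discrete-series support'' assigning to each irreducible tempered module a class in $\Delta_{\mh H}$, either by importing the corresponding statement for $\mh H(\mf t, W\tilde\Gamma, k)$ after the reduction above, or by combining transitivity of parabolic induction (induction in stages) with the generic-parameter case treated first. The twisting by $\natural$ contributes no additional difficulty here, precisely because of the reduction through $p_\natural$, which is why I would perform it at the outset.
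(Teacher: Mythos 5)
Your reduction of the theorem to \emph{spanning} plus \emph{disjointness}, and the passage to the untwisted algebra $p_\natural \mh H (\mf t, W\tilde\Gamma,k)$, are both sound, and spanning is indeed covered by the tempered Langlands classification. The gap is in disjointness. For non-generic $\lambda$ this is exactly the graded-Hecke analogue of Harish-Chandra's disjointness of representations induced from discrete series, and it does not follow from the generic $R$-group picture together with induction in stages: nothing in those formal tools rules out a common irreducible constituent of $\pi(Q,\delta,\lambda)$ and $\pi(Q',\delta',\lambda')$ at special parameters. Your two fallbacks are respectively circular (``import the statement for $\mh H(\mf t, W\tilde\Gamma,k)$'' is assuming the theorem in the untwisted case, which is just as hard) and insufficient. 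The paper supplies the missing input analytically: it invokes the Plancherel isomorphism of Delorme--Opdam for affine Hecke algebras with positive parameters (extended to twisted affine Hecke algebras in \cite[Theorem 3.2.2]{SolAHA}), transports the resulting decomposition to $\mh H$ via the localization equivalence of \cite[Theorem 2.1.3]{SolAHA} --- which only covers modules with $\mc O(\mf t)$-weights in an analytic neighborhood $U$ of $\mf t_\R$ --- and then extends to all of $R_t(\mh H)$ by a scaling argument on the imaginary part of the central character, using \cite[Propositions 7.2 and 10.1]{SolGHA}. Some substitute for this Plancherel-type input is unavoidable.

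A second, smaller error: your alternative reduction via $\zeta_0$ to $k=0$ cannot work, because $\zeta_0$ does \emph{not} preserve the discrete series property --- the paper states this explicitly right after Theorem \ref{thm:4.7}, noting that the parabolic subalgebras $\mh H (\mf t,(W\Gamma )_Q ,0,\natural)$ with $Q \neq \emptyset$ have no discrete series at all, so the classes $\mf d$ and the asserted decomposition do not transfer through $\zeta_0$.
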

\begin{proof}
By the main result of \cite{DeOp}, an analogue of the Plancherel isomorphism \cite{Wal}
for affine Hecke algebras, our theorem holds for affine Hecke algebras with positive parameters. 
That extends to twisted affine Hecke algebras $\mc H$ with positive parameters
\cite[Theorem 3.2.2]{SolAHA}. Choose such an algebra $\mc H$, whose associated graded Hecke
algebra, via the localization process from \cite[\S 2.1]{SolAHA}, is $\mh H$. This is possible 
because $\mh H$ has real parameters. Then \cite[Theorem 2.1.3]{SolAHA} provides an equivalence
between the subcategory of $\Rep_f (\mh H)$ formed by the representations with all their
$\mc O (\mf t)$-weights in a certain analytically open neighborhood $U$ of $\mf t_\R$ in $\mf t$,
and an analogous category of $\mc H$-modules. This equivalence preserves temperedness
\cite[(2.11)]{Slo}, so the subcategory of $\Rep_f (\mh H)$ determined by $U \subset \mf t$
decomposes in the required way. In particular the Grothendieck group of that subcategory has
the desired property.

Let $\eta \in \R_{>0}$ and $\lambda_1,\lambda_2 \in \mf t_\R$ so that 
$\lambda_1 + \sqrt{-1} \lambda_2 \in U$. $\Rep_{f,t}^{[Q,\delta]}(\mh H)$ only has nonzero 
objects with $\mc O (\mf t)$-weights of this form if the central character of $\delta$ is 
$(W\Gamma)_Q \lambda_1$.

By \cite[Proposition 10.1]{SolGHA} and the complete decomposability of 
$\pi (Q,\delta,\lambda)$ for $\lambda \in \sqrt{-1}\mf t^Q_\R$ \cite[Proposition 7.2]{SolGHA}, there 
is a natural bijection between the set of irreducible objects of $\Rep_{f,t}^{[Q,\delta]}(\mh H)$ 
with central character $\lambda_1 + \sqrt{-1}\lambda_2$ and the analogous set for 
$\lambda_1 + \sqrt{-1}\eta\lambda_2$. Hence the proven property of the part of $R_t (\mh H)$
over $U$ extends to the whole of $R_t (\mh H)$.
\end{proof}  

The decomposition from Theorem \ref{thm:4.7} is available for $\mh H (\mf t,W\Gamma,\epsilon k,
\natural)$ for any $\epsilon \in \R$. It is compatible with $\zeta_\epsilon$ for $\epsilon > 0$,
but not for $\epsilon < 0$ (because $\zeta_{-1}$ does not preserve temperedness). 
Theorem \ref{thm:4.7} is hardly helpful in the case $\epsilon = 0$, because the parabolic subalgebras 
$\mh H (\mf t,(W\Gamma )_Q ,0,\natural)$ with $Q \neq \emptyset$ do not have any discrete series
representations. (The map $\zeta_0$ does not preserve the discrete series property.)

Let us consider a finite set of algebraic families of $\mh H$-representations
$\mf F (Q_i,\sigma_i)$ as in Definition \ref{def:2}. We assume that $\sigma_i$ is irreducible elliptic 
and that the representations
\begin{equation}\label{eq:4.7}
\pi (Q_i,\sigma_i, \lambda_i) = \mr{ind}_{\mh H^Q}^{\mh H} (\C_{\lambda_i} \otimes \sigma_i) \quad 
\lambda_i \in \mf t^{Q_i}, i = 1,\ldots, n_{\mf F}
\end{equation}
span $\Q \otimes_\Z R(\mh H)$. It follows from the Langlands classification for graded Hecke
algebras \cite{Eve} (which can be generalized to $\mh H$ with the method from \cite[\S 2.2]{SolAHA})
that every irreducible elliptic $\mh H$-representation is tempered and admits a 
$\mc O (\mf t)^{W\Gamma}$-character in 
\[
(\R \otimes_\Z \Z \Phi) \oplus \C \otimes_\R (\R \Phi^\vee)^\perp.
\]
In particular every $\sigma_i \in \Irr (\mh H_{Q_i})$ is tempered and admits a 
$\mc O (\mf t_{Q_i})^{(W \Gamma)_{Q_i}}$-character in $\R \Delta_Q$. Then $\pi (Q_i,\sigma,\lambda_i)$
is tempered if and only if $\lambda_i \in \sqrt{-1} \mf t^{Q_i}_\R$, see \cite[Lemma 2.2]{SolGHA}.
Every tempered family $\mf F^t (Q_i,\sigma_i)$ belongs to $R_t (\mh H)^{\mf d}$ for a unique
$\mf d = [Q,\delta] \in \Delta_{\mh H}$. We denote that as $i \prec \mf d$. In this situation we may
and will assume that $Q_i \supset Q$ and that $\sigma_i$ is a subquotient of 
$\mr{ind}_{\mh H^Q}^{\mh H^{Q_i}} \delta$.

By Theorem \ref{thm:4.1}, $\zeta_\epsilon (\sigma_i) = \sigma_i \circ m_\epsilon$ is a finite
dimensional elliptic representation of $\mh H (\mf t_{Q_i},(W \Gamma)_{Q_i}, \epsilon k, \natural)$.
Further, 
\[
\zeta_\epsilon (\pi (Q_i,\sigma_i,\lambda_i)) = \pi (Q_i, \zeta_\epsilon (\sigma_i),\lambda_i)
\qquad \text{for } \lambda_i \in \mf t^{Q_i} ,
\]
so $\zeta_\epsilon$ sends $\mf F (Q_i,\sigma_i)$ to the algebraic family $\mf F (Q_i,\zeta_\epsilon 
(\sigma_i))$. The bijectivity of $\zeta_\epsilon$ implies that the members of the algebraic 
families $\{ \mf F (Q_i, \sigma_\epsilon (\sigma_i)) \}_{i=1}^{n_{\mf F}}$ span\\ 
$\Q \otimes_\Z R (\mh H (\mf t,W \Gamma,\epsilon k,\natural))$. Thus Lemma \ref{lem:2.5} 
and Theorem \ref{thm:2.6} apply to the families $\mf F (Q_i, \zeta_0 (\sigma_i))$.

Recall from \eqref{eq:4.6} that we can identify the 
$HH_n (\mh H (\mf t, W\Gamma,\epsilon k,\natural))$ for all $\epsilon \in \C$ with one fixed 
vector space. For each $\epsilon \in \C$ we have the algebra homomorphisms
\begin{equation}\label{eq:4.28}
\mc F_{Q_i,\zeta_\epsilon (\sigma_i)} : \mh H (\mf t, W\Gamma,\epsilon k,\natural) \to
\mc O (\mf t^{Q_i}) \otimes \mr{End}_\C \big( \C[W \Gamma,\natural] \underset{\C[(W\Gamma)_Q,
\natural]}{\otimes} V_{\sigma_i} \big) .
\end{equation}

\begin{lem}\label{lem:4.5}
The map $\mc F_{Q_i,\zeta_\epsilon (\sigma_i)}$ is a homomorphism of 
$\mc O (\mf t)^{W \Gamma}$-algebras, for a module structure that depends on $\epsilon$.
In particular the induced map on Hochschild homology is $\mc O (\mf t)^{W \Gamma}$-linear.
\end{lem}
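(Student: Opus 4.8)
The plan is to make the $\mc O(\mf t)^{W\Gamma}$-module structure on the target of \eqref{eq:4.28} explicit and then appeal to functoriality of Hochschild homology. First I would record that $\mc O(\mf t)^{W\Gamma}$ is central, both in $\mh H(\mf t,W\Gamma,\epsilon k,\natural)$ and in each parabolic subalgebra $\mh H^{Q_i}$: for $W\Gamma$-invariant $f$ the correction term $k_\alpha(f - s_\alpha(f))\alpha^{-1}$ in the deformed commutation relation vanishes, and $T_\gamma f T_\gamma^{-1} = \gamma(f) = f$, so $f$ commutes with all generators. This is the one step in which the value of $\epsilon$ plays no role.

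Next I would analyse how $\mc O(\mf t)^{W\Gamma}$ acts on $\pi(Q_i,\zeta_\epsilon(\sigma_i),\lambda)$. Being elliptic, $\sigma_i$ admits a $\mc O(\mf t_{Q_i})^{(W\Gamma)_{Q_i}}$-character (as recalled before the lemma), and since $\zeta_\epsilon(\sigma_i) = \sigma_i \circ m_\epsilon$ by Theorem~\ref{thm:4.1} while $m_\epsilon$ maps $\mc O(\mf t_{Q_i})^{(W\Gamma)_{Q_i}}$ into itself, $\zeta_\epsilon(\sigma_i)$ also admits such a character $\chi^i_\epsilon$ (for $\epsilon = 0$ it is evaluation at $0$, and $\chi^i_\epsilon$ genuinely depends on $\epsilon$). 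The elementary observation that makes everything fit is $\mc O(\mf t)^{W\Gamma} \subset \mc O(\mf t^{Q_i}) \otimes \mc O(\mf t_{Q_i})^{(W\Gamma)_{Q_i}}$: writing $f = \sum_j a_j \otimes b_j$ with the $a_j$ linearly independent in $\mc O(\mf t^{Q_i})$ and using that $(W\Gamma)_{Q_i}$ acts trivially on $\mf t^{Q_i} \subset \mf t^{(W\Gamma)_{Q_i}}$ and stabilizes $\mf t_{Q_i}$, the $W\Gamma$-invariance $f = w\cdot f = \sum_j a_j \otimes (w\cdot b_j)$ forces $w \cdot b_j = b_j$, hence $b_j \in \mc O(\mf t_{Q_i})^{(W\Gamma)_{Q_i}}$. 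Therefore $f \in \mc O(\mf t)^{W\Gamma}$ acts on $\C_\lambda \otimes \zeta_\epsilon(\sigma_i)$ by the scalar $(\mr{ev}_\lambda \otimes \chi^i_\epsilon)(f)$, a regular function of $\lambda$ which I call $\mu_{\epsilon,i}(f) \in \mc O(\mf t^{Q_i})$; and because $f$ is central in $\mh H(\mf t,W\Gamma,\epsilon k,\natural)$ and lies in $\mh H^{Q_i}$, it acts by the same scalar on $\pi(Q_i,\zeta_\epsilon(\sigma_i),\lambda) = \mr{ind}_{\mh H^{Q_i}}^{\mh H(\mf t,W\Gamma,\epsilon k,\natural)}(\C_\lambda \otimes \zeta_\epsilon(\sigma_i))$. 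Thus $\mc F_{Q_i,\zeta_\epsilon(\sigma_i)}(f) = \mu_{\epsilon,i}(f) \otimes \mr{id}$.

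It then follows that $\mu_{\epsilon,i} : \mc O(\mf t)^{W\Gamma} \to \mc O(\mf t^{Q_i})$ is an algebra homomorphism (depending on $\epsilon$ through $\chi^i_\epsilon$: for $\epsilon = 0$ it is restriction along $\mf t^{Q_i}\hookrightarrow\mf t\to\mf t/W\Gamma$, while for $\epsilon\neq0$ it is pullback along a shifted map). I would declare $\mc O(\mf t^{Q_i}) \otimes \End_\C(\cdots)$ an $\mc O(\mf t)^{W\Gamma}$-algebra via $\mu_{\epsilon,i}$ composed with the central inclusion $\mc O(\mf t^{Q_i}) \hookrightarrow \mc O(\mf t^{Q_i}) \otimes \End_\C(\cdots)$; with this structure the identity $\mc F_{Q_i,\zeta_\epsilon(\sigma_i)}(fx) = (\mu_{\epsilon,i}(f)\otimes\mr{id})\,\mc F_{Q_i,\zeta_\epsilon(\sigma_i)}(x)$ is immediate from the previous paragraph and multiplicativity of $\mc F_{Q_i,\zeta_\epsilon(\sigma_i)}$, so $\mc F_{Q_i,\zeta_\epsilon(\sigma_i)}$ is a homomorphism of $\mc O(\mf t)^{W\Gamma}$-algebras. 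Finally, Hochschild homology is functorial and carries a homomorphism of algebras over a commutative ring to a linear map of modules over that ring; applied here, together with the Morita and HKR identification $HH_n(\mc O(\mf t^{Q_i})\otimes\End_\C(\cdots)) \cong \Omega^n(\mf t^{Q_i})$, this yields that $HH_n(\mc F_{Q_i,\zeta_\epsilon(\sigma_i)})$ is $\mc O(\mf t)^{W\Gamma}$-linear, with $\mc O(\mf t)^{W\Gamma}$ acting on $\Omega^n(\mf t^{Q_i})$ by multiplication through $\mu_{\epsilon,i}$.

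The only real obstacle is bookkeeping: checking that $\pi(Q_i,\zeta_\epsilon(\sigma_i),\lambda)$ does admit an $\mc O(\mf t)^{W\Gamma}$-character for every $\lambda$ — which comes down to the inclusion $\mc O(\mf t)^{W\Gamma} \subset \mc O(\mf t^{Q_i}) \otimes \mc O(\mf t_{Q_i})^{(W\Gamma)_{Q_i}}$ and the fact that $\zeta_\epsilon$ takes a representation with a central character to another one — and then pinning down the $\epsilon$-dependence of the resulting module structure. The remaining ingredients (centrality, the behaviour of central elements under parabolic induction, and functoriality of $HH_*$) are standard.
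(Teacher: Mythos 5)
Your proof is correct and follows the same overall strategy as the paper: show that parabolically induced representations from $\zeta_\epsilon(\sigma_i)$ admit $\mc O(\mf t)^{W\Gamma}$-characters, deduce that the image of $\mc O(\mf t)^{W\Gamma}$ under $\mc F_{Q_i,\zeta_\epsilon(\sigma_i)}$ is central (hence the target becomes an $\mc O(\mf t)^{W\Gamma}$-algebra and the map is linear), and finish by functoriality of $HH_*$. The one point where you diverge is the justification of the central-character claim for $\pi(Q_i,\zeta_\epsilon(\sigma_i),\lambda_i)$: the paper simply cites Barbasch--Moy, whereas you give a self-contained argument via the inclusion $\mc O(\mf t)^{W\Gamma}\subset \mc O(\mf t^{Q_i})\otimes\mc O(\mf t_{Q_i})^{(W\Gamma)_{Q_i}}$ together with the standard observation that a central element of the ambient algebra lying in the parabolic subalgebra and acting by a scalar on the inducing module acts by the same scalar after induction. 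That substitute argument is valid and makes the $\epsilon$-dependence of the module structure (through the character $\chi^i_\epsilon$ of $\zeta_\epsilon(\sigma_i)$) completely explicit, at the cost of a little extra bookkeeping; nothing is missing.
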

\begin{proof}
As $\mc O (\mf t)^{W\Gamma} \subset Z (\mh H (\mf t,W\Gamma,\epsilon k,\natural))$, it acts
naturally on $HH_n (\mh H (\mf t,W\Gamma,\epsilon k,\natural))$. By the irreducibility of 
$\sigma_i$, every $\zeta_\epsilon (\sigma_i)$ admits a 
$\mc O (t_{Q_i})^{(W\Gamma)_{Q_i})}$-character. It depends linearly on $\epsilon$.
By \cite[Theorem 6.4]{BaMo}, every $\pi (Q_i,\zeta_\epsilon (\sigma_i),\lambda_i)$ admits a 
$\mc O (\mf t)^{W\Gamma}$-character, which implies that the image of $\mc O (\mf t)^{W\Gamma}$
under $\mc F_{Q_i,\zeta_\epsilon (\sigma_i)}$ is central. That turns
\[
\mc O (\mf t^{Q_i}) \otimes \mr{End}_\C \big( \C[W \Gamma,\natural] \underset{\C[(W\Gamma)_Q,
\natural]}{\otimes} V_{\sigma_i} \big)
\] 
into a $\mc O (t)^{W\Gamma}$-algebra and makes
$\mc F_{Q_i,\zeta_\epsilon (\sigma_i)}$ $\mc O (\mf t)^{W\Gamma}$-linear. The final claim is one
of the functorial properties of Hochschild homology \cite[\S 1.1]{Lod}.
\end{proof}

The algebra homomorphisms \eqref{eq:4.28} and \eqref{eq:4.6} 
give rise to a family of maps 
\[
HH_n (\mc F_\epsilon) \;:\; \bigoplus\nolimits_{w \in \langle W \Gamma \rangle}
\big( \Omega^n (\mf t^w) \otimes \natural^w \big)^{Z_{W \Gamma}(w)} \to
\bigoplus\nolimits_{i=1}^{n_{\mf F}} \Omega^n (\mf t^{Q_i}) \qquad \epsilon \in \C.
\]
The discussion after \eqref{eq:4.6} and the construction of $\zeta_\epsilon$ entail
that these maps depend algebraically on $\epsilon$.

\begin{lem}\label{lem:4.2}
The map $HH_n (\mc F_\epsilon)$ is injective, for every $\epsilon \in \C$.
\end{lem}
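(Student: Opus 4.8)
The plan is to use that $\mh H (\mf t,W\Gamma,\epsilon k,\natural)$ is a filtered algebra with associated graded algebra $\mh H (\mf t,W\Gamma,0,\natural) = \mc O (\mf t) \rtimes \C [W\Gamma,\natural]$, and to reduce injectivity of $HH_n (\mc F_\epsilon)$ for arbitrary $\epsilon$ to the case $\epsilon = 0$, which is contained in Theorem \ref{thm:2.6}. Filter $\mh H (\mf t,W\Gamma,\epsilon k,\natural)$ by placing $\mc O (\mf t)_{\leq j}\, \C [W\Gamma,\natural]$ in degree $j$; the relation $f T_{s_\alpha} - T_{s_\alpha} s_\alpha (f) = \epsilon k_\alpha (f - s_\alpha f) \alpha^{-1}$, whose right hand side has polynomial degree $< \deg f$, shows this is an algebra filtration with associated graded $\mc O (\mf t) \rtimes \C [W\Gamma,\natural]$. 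Both the source $\bigoplus_{w} (\Omega^n (\mf t^w) \otimes \natural^w)^{Z_{W\Gamma}(w)}$ and the target $\bigoplus_i \Omega^n (\mf t^{Q_i})$ carry the grading by polynomial degree; it is nonnegative with finite dimensional graded pieces, and on the source it is the grading coming from \eqref{eq:4.6}, which is independent of $\epsilon$ because the subcomplexes $T_w C_* (\mf t / (w-1)\mf t)$ realizing \eqref{eq:4.6} do not depend on $k$.

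The central point is that, for every $\epsilon \in \C$, the map $HH_n (\mc F_\epsilon)$ does not raise degree and its degree-preserving part equals $HH_n (\mc F_0)$. I would prove this first for the algebra homomorphisms. Each $\sigma_i$ is irreducible elliptic, hence tempered with an $\mc O (\mf t_{Q_i})^{(W\Gamma)_{Q_i}}$-character in $\R \Delta_{Q_i}$, so Theorem \ref{thm:4.1} gives $\zeta_\epsilon (\sigma_i) = \sigma_i \circ m_\epsilon$, and $m_\epsilon$ multiplies a homogeneous element of degree $d$ by $\epsilon^d$; in particular $\zeta_0 (\sigma_i)$ is the inflation of an irreducible $\C [(W\Gamma)_{Q_i},\natural]$-representation. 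Now one writes out $\pi (Q_i,\zeta_\epsilon (\sigma_i),\lambda)(f T_u)$ for $f \in \mc O (\mf t)$ homogeneous of degree $e$, using that in $\mh H (\mf t,W\Gamma,\epsilon k,\natural)$ the element $f T_u$ equals $T_u\, u^{-1}(f)$ up to terms of strictly lower polynomial degree that are divisible by $\epsilon$, and that the $\mf t_{Q_i}$-directions are acted upon through $\sigma_i \circ m_\epsilon$; tracking, for each resulting monomial, the sum of its degree in $\lambda$ and its power of $\epsilon$, one finds this sum is always $\leq e$. This gives that $\mc F_{Q_i,\zeta_\epsilon (\sigma_i)}$ is a filtered homomorphism with associated graded $\mc F_{Q_i,\zeta_0 (\sigma_i)}$ (which is genuinely graded, by the inflation property of $\zeta_0 (\sigma_i)$). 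Transporting this statement through the generalized trace map and the map $\Omega$ along the $k$-independent subcomplexes $T_w C_* (\mf t / (w-1)\mf t)$ yields that $HH_n (\mc F_\epsilon)$ does not raise degree and has degree-preserving part $HH_n (\mc F_0)$.

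Granting this, the lemma follows by a leading-term argument. By Theorem \ref{thm:2.6}(a), applied to $A = \mc O (\mf t) \rtimes \C [W\Gamma,\natural]$ and the families $\mf F (Q_i,\zeta_0 (\sigma_i))$, whose members span $\C \otimes_\Z R(A)$, the map $HH_n (\mc F_0)$ is injective; being degree-preserving, it is injective on each homogeneous component of the source. Let $\xi \neq 0$ in the source, and let $\xi_D$ be its highest-degree homogeneous component, which exists since the grading is bounded below and $\xi$ is a finite sum. As $HH_n (\mc F_\epsilon)$ never raises degree, the degree-$D$ component of $HH_n (\mc F_\epsilon)(\xi)$ receives a contribution only from $\xi_D$ and equals $HH_n (\mc F_0)(\xi_D) \neq 0$. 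Hence $HH_n (\mc F_\epsilon)(\xi) \neq 0$, so $HH_n (\mc F_\epsilon)$ is injective.

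I expect the main obstacle to be the second paragraph: making precise how the filtration on $\mh H (\mf t,W\Gamma,\epsilon k,\natural)$ interacts with parabolic induction and with the identity $\zeta_\epsilon (\sigma_i) = \sigma_i \circ m_\epsilon$, and in particular verifying that the degree-preserving part of $HH_n (\mc F_\epsilon)$ is literally $HH_n (\mc F_0)$ and not merely some other injective degree-$0$ map. This also requires checking compatibility with the subcomplexes underlying \eqref{eq:4.6}, which are $k$-independent but not manifestly graded subcomplexes.
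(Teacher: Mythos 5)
Your proof is correct in outline, but it takes a genuinely different route from the paper's. The paper also reduces to the injectivity of $HH_n (\mc F_0)$ from Theorem \ref{thm:2.6}, but then argues in two short steps: since $HH_n (\mc F_\epsilon) x$ depends algebraically on $\epsilon$ (a fact recorded just before the lemma) and is nonzero at $\epsilon = 0$, it is nonzero for $|\epsilon|$ small; and the scaling isomorphisms $m_\eta : \mh H (\mf t, W\Gamma, \epsilon k, \natural) \to \mh H (\mf t, W\Gamma, \eta^{-1}\epsilon k, \natural)$, which carry $\mf F (Q_i, \zeta_\epsilon (\sigma_i))$ to $\mf F (Q_i, \zeta_{\eta^{-1}\epsilon}(\sigma_i))$ up to rescaling $\mf t^{Q_i}$ by $\eta$, transport the nonvanishing from small $\epsilon$ to all of $\C$. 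Your filtration argument replaces this ``nonzero near $0$ plus scaling'' mechanism by the single assertion that $HH_n (\mc F_\epsilon)$ is filtered with degree-preserving part $HH_n (\mc F_0)$; these are two packagings of the same scaling symmetry (conjugating by $m_\eta$ multiplies the component of $HH_n (\mc F_\epsilon)$ that lowers degree by $j$ by $\eta^{\pm j}$, which is precisely the statement that this component is $\epsilon^j$ times a fixed map). What the paper's route buys is that it avoids your second paragraph entirely: the chain-level bookkeeping (commuting $f$ past $T_u$, the action of $\zeta_\epsilon (\sigma_i) = \sigma_i \circ m_\epsilon$ on $\mc O (\mf t_{Q_i})$, compatibility with $\mr{gtr}$, $\Omega$ and the $k$-independent subcomplexes $T_w C_* (\mc O (\mf t / (w-1)\mf t))$) is absorbed into facts already established. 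What your route buys is a self-contained argument that yields injectivity for every $\epsilon$ in one stroke, with no continuity-in-$\epsilon$ step. The obstacle you flag is real but is only careful bookkeeping, not a missing idea: your observation that in each monomial the $\lambda$-degree plus the power of $\epsilon$ is at most $\deg f$, so that maximal $\lambda$-degree forces $\epsilon^0$ and hence recovers the $\epsilon = 0$ value, is exactly the point needed to make the leading-term argument close.
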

\begin{proof}
Consider a nonzero element $x$ of the domain of $HH_n (\mc F_\epsilon)$. From Theorem \ref{thm:2.6}
we know that $HH_n (\mc F_0)$ is injective, so $HH_n (\mc F_0) x \neq 0$. As $HH_n (\mc F_\epsilon)x$ 
depends algebraically on $\epsilon \in \C$, it is nonzero when $|\epsilon|$ is sufficiently small.

For any $\eta \in \C^\times$ we have the algebra isomorphism
\[
m_\eta : \mh H (t,W\Gamma, \epsilon k,\natural) \to \mh H (t,W\Gamma, \eta^{-1} \epsilon k,\natural),
\]
which sends a family $\mf F (Q_i,\zeta_\epsilon (\sigma_i))$ to $\mf F (Q_i, \zeta_{\eta^{-1} 
\epsilon} (\sigma_i))$, with an additional scaling by $\eta$ on $\mf t^{Q_i}$. The latter respect
the entire structure, so we can conclude that $HH_n (\mc F_{\eta^{-1} \epsilon}) x \neq 0$ for all
$\eta \in \C^\times$.
\end{proof}

We want to analyse the maps $HH_n (\mc F_\epsilon)$ like in Proposition \ref{prop:2.8}. For
$g \in \langle W \Gamma \rangle$ we choose
$\lambda_{g,i} \in \C$ and $\phi_{g,i} : \mf t^g \to \mf t^{Q_i}$ as in Lemma \ref{lem:2.7}, so that
\begin{equation}\label{eq:4.11}
\nu_{g,v} = \sum\nolimits_{i=1}^{n_{\mf F}} \lambda_{g,i} \, \mr{tr} \, \pi (Q_i, \zeta_0 (\sigma_i),
\phi_{g,i}(v) ) \quad \text{in } \C \otimes_\Z R (\mh H (\mf t, W\Gamma, 0, \natural)) .
\end{equation}
Although $\nu_{g,v}$ has only been defined when $(W\Gamma)_v \cap Z_G (g) \subset \ker (\natural^g)$,
the right hand side of \eqref{eq:4.11} makes sense for any $v \in \mf t^g$. For $\epsilon \in \C$,
and $\mf d \in \Delta_{\mh H}$ we define the algebraic families of virtual 
$\mh H (\mf t, W\Gamma, \epsilon k, \natural)$-representations
\begin{equation}\label{eq:4.8}
\begin{aligned}
& \nu_{g,v}^{\epsilon, \mf d} = \sum\nolimits_{i=1, i \prec \mf d}^{n_{\mf F}} \lambda_{g,i} 
\, \mr{tr} \, \pi (Q_i, \zeta_\epsilon (\sigma_i), \phi_{g,i}(v) ) \qquad v \in \mf t^g,\\
& \nu_{g,v}^{\epsilon} = \sum\nolimits_{\mf d \in \Delta_{\mh H}} \nu_{g,v}^{\epsilon, \mf d} .
\end{aligned}
\end{equation}
For all $v \in \sqrt{-1} \mf t^g_\R, v_i \in \sqrt{-1} \mf t_\R^{Q_i}$ we have
\[
\nu^{0,\mf d}_{g,v}, \pi (Q_i, \zeta_0 (\sigma_i), v_i) \in \zeta_0 (R_t (\mh H)^{\mf d}) .
\]
Hence the proof of Lemma \ref{lem:2.7} can played entirely in 
$\Q \otimes_\Z \zeta_0 (R_t (\mh H)^{\mf d})$, which means that we do not need all elements of 
$W \Gamma$ for the $\phi_{g,i}$, only those of $(W\Gamma)_{\mf d}$. 

Like in \eqref{eq:2.35}, for $g \in \langle W \Gamma \rangle, h \in W \Gamma$ we define
\[
\lambda_{hgh^{-1},i} = \natural^g (h) \lambda_{g,i} \quad \text{and} \quad
\phi_{hgh^{-1},i} = \phi_{g,i} \circ h^{-1} .
\]
This is consistent with the equality $\nu_{hgh^{-1},hv} = \natural^g (h) \nu_{g,v}$ from
\eqref{eq:2.16}. Then we define $\nu_{hgh^{-1},v'}$ as in \eqref{eq:4.11} and 
$\nu_{hgh^{-1},v'}^\epsilon$ as in \eqref{eq:4.8}. The bijectivity in Theorem \ref{thm:4.1} 
implies that the $\nu_{hgh^{-1},v'}^\epsilon$ satisfy the same consistency relations. 

\begin{lem}\label{lem:4.6}
Let $g,h \in W \Gamma$ and $v \in \mf t^g$.
\enuma{
\item There is an equality 
\[
\nu^\epsilon_{hgh^{-1},hv} = \natural^g (h) \nu_{g,v}^\epsilon \quad \text{in} \quad 
\C \otimes_\Z R(\mh H (\mf t, W\Gamma, \epsilon k, \natural)).
\]
\item For $\mf d \in \Delta_{\mh H}$ we have
\[
\nu^{\epsilon,\mf d}_{hgh^{-1},h v} = \natural^g (h) \nu^{\epsilon,\mf d}_{g,v}.
\] 
}
\end{lem}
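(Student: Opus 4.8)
The plan is to unwind the definitions in \eqref{eq:4.8} and reduce everything to the consistency relation $\nu_{hgh^{-1},hv} = \natural^g(h)\nu_{g,v}$ already recorded in \eqref{eq:2.16}, transported from $\mc O(\mf t)\rtimes\C[W\Gamma,\natural]$ to $\mh H(\mf t,W\Gamma,\epsilon k,\natural)$ via the bijections $\zeta_\epsilon$ of Theorem \ref{thm:4.1}. For part (b), recall that $\nu^{\epsilon,\mf d}_{g,v}$ is the partial sum over those indices $i$ with $i\prec\mf d$ of the terms $\lambda_{g,i}\,\mr{tr}\,\pi(Q_i,\zeta_\epsilon(\sigma_i),\phi_{g,i}(v))$. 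By the very definitions \eqref{eq:2.35} (as reproduced just before the lemma), $\lambda_{hgh^{-1},i} = \natural^g(h)\lambda_{g,i}$ and $\phi_{hgh^{-1},i} = \phi_{g,i}\circ h^{-1}$, so that $\phi_{hgh^{-1},i}(hv) = \phi_{g,i}(v)$. Therefore, term by term,
\[
\lambda_{hgh^{-1},i}\,\mr{tr}\,\pi(Q_i,\zeta_\epsilon(\sigma_i),\phi_{hgh^{-1},i}(hv))
= \natural^g(h)\,\lambda_{g,i}\,\mr{tr}\,\pi(Q_i,\zeta_\epsilon(\sigma_i),\phi_{g,i}(v)),
\]
and summing over $\{i : i\prec\mf d\}$ gives $\nu^{\epsilon,\mf d}_{hgh^{-1},hv} = \natural^g(h)\,\nu^{\epsilon,\mf d}_{g,v}$. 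Part (a) then follows by summing part (b) over $\mf d\in\Delta_{\mh H}$, since $\nu^\epsilon_{g,v} = \sum_{\mf d}\nu^{\epsilon,\mf d}_{g,v}$.

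The one point that needs a little care — and the only real content beyond bookkeeping — is that the partition of the index set $\{1,\dots,n_{\mf F}\}$ according to $i\prec\mf d$ is the \emph{same} for $g$ and for $hgh^{-1}$, i.e. the decomposition into families is intrinsic to the family $\mf F(Q_i,\sigma_i)$ and not to the choice of conjugacy class representative. This is immediate because $i\prec\mf d$ was defined by membership of the tempered part $\mf F^t(Q_i,\sigma_i)$ in $R_t(\mh H)^{\mf d}$, which involves only the family and the decomposition of Theorem \ref{thm:4.7}, not $g$. Hence replacing $g$ by $hgh^{-1}$ merely rescales each $\lambda_{g,i}$ by $\natural^g(h)$ and reparametrizes each $\phi_{g,i}$, both of which respect the grouping by $\mf d$, so the identity holds summand-by-summand as claimed.

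Finally, the remark that these are genuine equalities in $\C\otimes_\Z R(\mh H(\mf t,W\Gamma,\epsilon k,\natural))$ (not just formal sums of symbols) uses that $\zeta_\epsilon$ is a bijection on $\Q\otimes_\Z R$ compatible with parabolic induction (Theorem \ref{thm:4.1}(ii)), so that $\zeta_\epsilon(\pi(Q_i,\sigma_i,\lambda)) = \pi(Q_i,\zeta_\epsilon(\sigma_i),\lambda)$; the consistency relation for the $\nu_{g,v}$ in $R(\mc O(\mf t)\rtimes\C[W\Gamma,\natural])$ therefore propagates to the $\nu^\epsilon_{g,v}$. I do not expect any serious obstacle here — the statement is essentially a compatibility check — so the proof is short.
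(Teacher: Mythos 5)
There is a genuine gap, and it sits exactly where the lemma has content: the case $h \in Z_{W\Gamma}(g)$ (and, more generally, any $h$ that acts nontrivially on $\mf t^g$ while returning $g$ to itself or to the same chosen conjugate). For such $h$ you cannot invoke the ``definitions'' $\lambda_{hgh^{-1},i} = \natural^g (h) \lambda_{g,i}$ and $\phi_{hgh^{-1},i} = \phi_{g,i} \circ h^{-1}$: when $hgh^{-1} = g$ these would redefine the already-fixed $\lambda_{g,i}, \phi_{g,i}$, and would be inconsistent unless $\natural^g (h) = 1$. What the lemma asserts in that case is the genuine symmetry $\sum_{i} \lambda_{g,i}\, \mr{tr}\, \pi (Q_i, \zeta_\epsilon (\sigma_i), \phi_{g,i}(hv)) = \natural^g (h) \sum_i \lambda_{g,i}\, \mr{tr}\, \pi (Q_i, \zeta_\epsilon (\sigma_i), \phi_{g,i}(v))$, and this is emphatically \emph{not} a term-by-term identity: $\pi (Q_i, \zeta_\epsilon (\sigma_i), \phi_{g,i}(hv))$ and $\pi (Q_i, \zeta_\epsilon (\sigma_i), \phi_{g,i}(v))$ are in general inequivalent representations of the same dimension, so they cannot differ by the root of unity $\natural^g (h) \neq 1$ in $R(\mh H)$. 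Example \ref{ex:2.B} makes this concrete: $4 \nu_{\pm \mb i, v} = -2i\, \pi (\mb i, \delta_{\mb i}, v) + i\, \pi (\emptyset, \mr{triv}, v)$ and the relation $\nu_{\pm \mb i, -v} = - \nu_{\pm \mb i, v}$ holds only after using $\pi (\mb i, \delta_{\mb i}, -v) = \pi (\emptyset, \mr{triv}, v) - \pi (\mb i, \delta_{\mb i}, v)$, i.e.\ by mixing the terms, not matching them one by one. (Your observation that the grouping $i \prec \mf d$ is intrinsic to the family is correct but is not where the difficulty lies.)

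The actual proof of (a) has to go through the $k=0$ case: the relation $\nu_{hgh^{-1},hv} = \natural^g (h) \nu_{g,v}$ of \eqref{eq:2.16} is a property of the trace functions on $\mc O (\mf t) \rtimes \C [W\Gamma,\natural]$ established in Section \ref{sec:crossed}, valid a priori for $g \in \langle W\Gamma \rangle$ and those $v$ where $\nu_{g,v}$ is defined; one extends by continuity to all $v \in \mf t^g$, transports it to $\nu^\epsilon_{g,v} = \zeta_\epsilon \zeta_0^{-1}(\nu_{g,v})$ using the bijectivity of $\zeta_\epsilon$ (you do mention this, but only as a closing remark rather than as the engine of the argument), and handles arbitrary $g$ via the cocycle identity of Lemma \ref{lem:2.10}. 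For (b) an additional input is unavoidable: since the identity in (a) holds only as an equality of virtual representations, you need Theorem \ref{thm:4.7} to know that for tempered parameters $v \in \sqrt{-1}\mf t^g_\R$ the $\mf d$-components of both sides lie in linearly independent summands of $\Q \otimes_\Z R_t(\mh H)$, so that (a) forces equality component by component; one then continues algebraically from $\sqrt{-1}\mf t^g_\R$ to all of $\mf t^g$. Your proposal never uses Theorem \ref{thm:4.7} for this purpose, so part (b) is unproven as written, and your derivation of (a) from (b) inherits the gap.
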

\begin{proof}
(a) By \eqref{eq:2.16} and the above conventions this holds when $g \in \langle W \Gamma \rangle$ 
and $(W \Gamma)_v \cap Z_{W \Gamma} (g) \subset \ker (\natural^g)$. That extends to all
$v \in V^g$ by continuity. 

For an arbitrary element $g' = w g w^{-1}$ of $G$, we get
\[
\nu^\epsilon_{hg'h^{-1},h v'} = \nu^\epsilon_{h w g w^{-1} h^{-1}, h v'} =
\natural^g (hw) \nu^\epsilon_{g, w^{-1} v'} =
\natural^g (hw) \natural^g (w)^{-1} \nu^\epsilon_{w g w^{-1},v'} .
\]
By Lemma \ref{lem:2.10}.b the right hand side equals $\natural^{g'}(h) \nu^\epsilon_{g',v'}$.\\
(b) Since every $\phi_{g,i}$ stabilizes $\sqrt{-1} \mf t_\R$, it makes sense to restrict our 
attention to the $\nu^{\epsilon,\mf d}_{g,v}$ with $v \in \sqrt{-1} \mf t_\R^g$.
These are precisely the $v$ for which the associated representations 
$\pi (Q_i,\sigma_i,\phi_{g,i}(v))$ are tempered.

Theorem \ref{thm:4.7} says that for various $\mf d$ the components $\nu^{\epsilon,\mf d}_{g,v}$
of $\nu^1_{g,v}$ live in linearly independent parts of $\Q \otimes_\Z R (\mh H)$.
Then the property of $\nu^\epsilon_{g,v}$ from Lemma \ref{lem:4.6}.a must also hold for 
all of its $\mf d$-components, so
\[
\nu^{\epsilon,\mf d}_{hgh^{-1},h v} = \natural^g (h) \nu^{\epsilon,\mf d}_{g,v}
\qquad v \in \mf t_\R^g .
\]
Both sides of this equality extend algebraically to $v \in \mf t^g$, so
the equality as well. 
\end{proof}

By Lemma \ref{lem:2.5} and Theorem \ref{thm:4.1}, the $\nu^\epsilon_{g,v}$ with $g \in \langle
W \Gamma \rangle$ and $(W \Gamma)_v \cap Z_{W \Gamma} (g) \subset \ker (\natural^g)$ span
$\C \otimes_\Z R (\mh H (\mf t, W \Gamma, \epsilon k, \natural))$, and a basis is obtained by
dividing out the $W \Gamma$-equivariance relations from Lemma \ref{lem:4.6}.a.

In general the virtual representations $\nu_{g,v}^\epsilon$ do not admit a central character,
but their summands $\nu_{g,v}^{\epsilon,\mf d}$ do: 

\begin{lem}\label{lem:4.9}
Let $cc(\delta) \in \mf t_{\R,Q}$ be an $\mc O (\mf t_Q)$-weight of $\delta$, in other words,
a representative of the central character of $\delta$. Let $g \in W \Gamma , v \in \mf t^g$.
Then the virtual $\mh H (\mf t, W \Gamma, \epsilon k, \natural)$-representation 
$\nu_{g,v}^{\epsilon,\mf d}$ admits the central character
\[
W \Gamma (\epsilon \, cc (\delta) + v) = W \Gamma ((W\Gamma)_Q e \, cc (\delta) + v) .
\]
\end{lem}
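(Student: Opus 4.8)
The plan is to reduce the assertion to a computation of the $\mc O(\mf t)^{W\Gamma}$-central characters of the individual representations $\pi(Q_i,\zeta_\epsilon(\sigma_i),\phi_{g,i}(v))$ that occur in the definition \eqref{eq:4.8} of $\nu^{\epsilon,\mf d}_{g,v}$. Each of them admits such a character by the argument in the proof of Lemma \ref{lem:4.5} (namely \cite[Theorem 6.4]{BaMo}), so only their value is at issue. Once one checks that, for every $i\prec\mf d$ with $\lambda_{g,i}\neq 0$, this value is the single class $W\Gamma(\epsilon\,cc(\delta)+v)$, the claim for $\nu^{\epsilon,\mf d}_{g,v}$ follows immediately: a finite $\C$-linear combination of traces of finite-dimensional representations all having one and the same central character is a combination of irreducibles all carrying that central character, hence admits it.

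First I would pin down $cc(\zeta_\epsilon(\sigma_i))$ as a character of $\mc O(\mf t_{Q_i})^{(W\Gamma)_{Q_i}}$. By the reductions recalled before Theorem \ref{thm:4.7} we may assume $Q\subseteq Q_i$ and that $\sigma_i\in\Irr(\mh H_{Q_i})$ is a subquotient of $\mr{ind}_{\mh H^Q}^{\mh H^{Q_i}}\delta$; applying \cite[Theorem 6.4]{BaMo} inside $\mh H_{Q_i}$ then gives $cc(\sigma_i)=(W\Gamma)_{Q_i}\,cc(\delta)$, where $cc(\delta)\in\mf t_{\R,Q}$ is read inside $\mf t_{Q_i}$ via $\mf t_Q\subseteq\mf t_{Q_i}$. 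Now $cc(\zeta_\epsilon(\sigma_i))$ depends linearly on $\epsilon$ (proof of Lemma \ref{lem:4.5}); at $\epsilon=0$ it is $0$, because $\zeta_0(\sigma_i)=\sigma_i\circ m_0$ makes $\mc O(\mf t_{Q_i})$ act through evaluation at $0$ (Theorem \ref{thm:4.1}(iv)); and at $\epsilon=1$ it is $cc(\sigma_i)$, because $\zeta_1(\sigma_i)=\sigma_i$ (Theorem \ref{thm:4.1}(iv) with $m_1=\mr{id}$). Hence $cc(\zeta_\epsilon(\sigma_i))=\epsilon\,(W\Gamma)_{Q_i}\,cc(\delta)$. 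Since $\pi(Q_i,\zeta_\epsilon(\sigma_i),\phi_{g,i}(v))=\mr{ind}_{\mh H^{Q_i}}^{\mh H}\big(\C_{\phi_{g,i}(v)}\otimes\zeta_\epsilon(\sigma_i)\big)$, a second application of \cite[Theorem 6.4]{BaMo} shows its $\mc O(\mf t)^{W\Gamma}$-character is $W\Gamma\big(\phi_{g,i}(v)+\epsilon\,cc(\delta)\big)$, the sum being formed in $\mf t=\mf t^{Q_i}\oplus\mf t_{Q_i}$.

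It then remains to replace $\phi_{g,i}(v)$ by $v$, independently of $i$. Suppose first that $g\in\langle W\Gamma\rangle$. By the remark after \eqref{eq:4.8}, for $i\prec\mf d$ the map $\phi_{g,i}$ is the restriction to $\mf t^g$ of an element $w_{g,i}\in(W\Gamma)_{\mf d}$. Because $w_{g,i}$ stabilizes the pair $(Q,\delta)$ up to equivalence, $w_{g,i}\cdot cc(\delta)=u\cdot cc(\delta)$ for some $u\in(W\Gamma)_Q$; and since $(W\Gamma)_Q$ fixes $\mf t^Q\supseteq\mf t^{Q_i}$ pointwise while $\phi_{g,i}(v)=w_{g,i}v\in\mf t^{Q_i}$, replacing $w_{g,i}$ by $u^{-1}w_{g,i}$ changes neither $\phi_{g,i}$ nor the containment $w_{g,i}v\in\mf t^{Q_i}$, so we may assume $w_{g,i}\cdot cc(\delta)=cc(\delta)$. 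Then $\phi_{g,i}(v)+\epsilon\,cc(\delta)=w_{g,i}\big(v+\epsilon\,cc(\delta)\big)$, so $W\Gamma\big(\phi_{g,i}(v)+\epsilon\,cc(\delta)\big)=W\Gamma\big(v+\epsilon\,cc(\delta)\big)$; in particular this is the same for all contributing $i$, and its independence of the choice of representative $cc(\delta)$ in its $(W\Gamma)_Q$-orbit (the alternative form in the statement) is visible from the same normalization of $w_{g,i}$. Combined with the previous paragraph this settles the case $g\in\langle W\Gamma\rangle$, and the general case of $g\in W\Gamma$ follows from the $W\Gamma$-equivariance of Lemma \ref{lem:4.6}.b, applied with the representative of $\mf d$, hence of $cc(\delta)$, conjugated to match $g$.

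The step I expect to be most delicate is the bookkeeping: keeping the nested splittings associated to $Q\subseteq Q_i$ straight so that $cc(\delta)$ and $\phi_{g,i}(v)$ land in the right summands of $\mf t$ and the two invocations of \cite[Theorem 6.4]{BaMo} can be chained, together with the verification that $\phi_{g,i}$ can be normalized to fix $cc(\delta)$ while still mapping into $\mf t^{Q_i}$. The representation-theoretic ingredients — temperedness of elliptic representations, the Bernstein-type description of central characters of parabolically induced modules, and the interaction of $\zeta_\epsilon$ with $m_\epsilon$ — are all available from the cited results and from Theorems \ref{thm:4.1} and \ref{thm:4.7}.
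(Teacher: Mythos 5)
Your proposal is correct and follows essentially the same route as the paper: compute the $\mc O(\mf t)^{W\Gamma}$-character of each $\pi(Q_i,\zeta_\epsilon(\sigma_i),\phi_{g,i}(v))$ as $W\Gamma(\epsilon\,cc(\delta)+\phi_{g,i}(v))$ and then use $\phi_{g,i}\in(W\Gamma)_{\mf d}$ to move $\phi_{g,i}$ across the sum. The only cosmetic difference is that you obtain the weight $\epsilon\,cc(\delta)$ of $\zeta_\epsilon(\sigma_i)$ by linearity in $\epsilon$ together with the endpoints $\epsilon=0,1$, whereas the paper tracks the individual weight $cc(\delta)$ of $\sigma_i$ (via the R-group/intertwining-operator argument) and scales it directly by $m_\epsilon$ using Theorem \ref{thm:4.1}(iv); both are valid.
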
 
\begin{proof}
For $\epsilon = 0$, by construction $\nu_{g,v}^0 = \nu_{g,v}$ has central character $W \Gamma v$.
Hence all the $\pi (Q_i, \zeta_0 (\sigma_i),\phi_{g,i}(v))$ with $\lambda_{g,i} \neq 0$ have
central character $W \Gamma v$. 

Consider $i \prec \mf d$. Since $\sigma_i$ is a subquotient of $\mr{ind}_{\mh H^Q}^{\mh H^{Q_i}} 
(\delta)$, $(W\Gamma)_{Q_i} cc(\delta)$ is the central character of $\sigma_i$. By the 
invertibility of intertwining operators for tempered parabolically induced representations and
the theory of R-groups for graded Hecke algebras \cite[\S 3.5 and 4.1]{SolHecke}, 
$\mr{ind}_{\mh H^Q}^{\mh H^{Q_i}} (\delta)$ is a direct sum of irreducible representations with 
exactly the same $\mc O (\mf t)$-weights. Thus every $\mc O (\mf t)$-weight of $\delta$, and in
particular $cc(\delta)$, is also a $\mc O (\mf t)$-weight of $\sigma_i$.

By Theorem \ref{thm:4.1}.iv, $\epsilon cc (\delta) + \phi_{g,i}(v)$ is an $\mc O (\mf t)$-weight
of $\zeta_\epsilon (\sigma_i) \otimes \phi_{g,i}(v)$. Then the central character of 
$\pi (Q_i,\zeta_\epsilon (\sigma_i),\phi_{g,i}(v))$ is
\begin{equation}\label{eq:4.24}
W \Gamma (\epsilon cc (\delta) + \phi_{g,i}(v)) = 
W \Gamma (\epsilon \phi_{g,i}^{-1} cc (\delta) + v) .
\end{equation}
Here $\phi_{g,i} \in (W\Gamma)_{\mf d}$, so $\phi_{g,i}^{-1} cc (\delta)$ is still a 
$\mc O (\mf t)$-weight of $\delta$. Thus all\\ $\pi (Q_i,\zeta_\epsilon (\sigma_i),\phi_{g,i}(v))$
with $\lambda_{g,i} \neq 0$ and $i \prec \mf d$ have the same central character \eqref{eq:4.24},
and so does their linear combination $\nu_{g,v}^{\epsilon,\mf d}$.
\end{proof}

\subsection{Hochschild homology} \ 

As $\phi_{g,i}$ is given by an element of $W\Gamma$, it extends naturally to a linear bijection
$\phi_{g,i} : \mf t \to \mf t$. Thus $\phi_{g,i} : \mf t^g \to \mf t^{Q_i}$ admits a one-sided inverse
\begin{equation}\label{eq:4.13}
\mf t^{Q_i} \xrightarrow{(\mr{id},0)} \mf t^{Q_i} \oplus \mf t_{Q_i} = 
\mf t \xrightarrow{\phi_{g,i}^{-1}} \mf t \to \mf t / (g-1) \mf t \cong \mf t^g .
\end{equation}
The algebra homomorphism $\phi_{g,i}^* : \mc O (\mf t^{Q_i}) \to \mc O (\mf t^g)$ also has a one-sided
inverse, namely composing functions with \eqref{eq:4.13}.
Like in \eqref{eq:2.18}, for each pair $(g,i)$ the map $\phi_{g_i}^*$ induces an algebra homomorphism
\begin{equation}\label{eq:4.16}
\phi_{g,i}^* : \mc O (\mf t^{Q_i}) \otimes \End_\C \big( \C [W \Gamma,\natural] 
\underset{\C [(W\Gamma)_Q,\natural}{\otimes} V_{\sigma_i} \big) \to
\mc O (V^g) \otimes \End_\C \big( \C [W \Gamma,\natural] 
\underset{\C [(W\Gamma)_Q,\natural}{\otimes} V_{\sigma_i} \big) .
\end{equation}
The map $\phi_{g,i}^*$ is $\mc O (\mf t^g)$-linear if we let $\mc O (\mf t^g)$ act on its domain via 
composition with \eqref{eq:4.13}. On the other hand, $\phi_{g,i}^*$ is $\mc O (\mf t)^{W\Gamma}$-linear 
if we endow both sides with the $\mc O (\mf t)^{W\Gamma}$-module structure coming from the central 
characters of the involved $\mh H (\mf t, W\Gamma, \epsilon k,\natural)$-representations. That works
for any $\epsilon \in \C$, but the resulting module structures depend on $\epsilon$.

The maps on Hochschild homology induced by the $\phi_{g,i}^*$ can be summed with coefficients 
$\lambda_{g,i}$, and that yields a map 
\begin{equation}\label{eq:4.10}
HH_n (\phi^*_{\mf d}) = \bigoplus_{g \in \langle W \Gamma \rangle} \sum_{i=1, i \prec \mf d}^{n_{\mf F}}
\lambda_{g,i} HH_n (\phi_{g,i}^*) \;: \bigoplus_{i=1, i \prec \mf d}^{n_{\mf F}} \Omega^n (\mf t^{Q_i}) 
\to \bigoplus_{g \in \langle W \Gamma \rangle} \Omega^n (\mf t^g) .
\end{equation}
By Lemma \ref{lem:4.9} it is $\mc O (\mf t)^{W\Gamma}$-linear if we let $\mc O (\mf t)^{W \Gamma}$ 
act on $\Omega^n (\mf t^g)$ via the central character of the underlying virtual representation 
$\nu_{g,v}^{\epsilon,\mf d}$. On the other hand, the map
\[
HH_n (\phi^*) = \bigoplus_{\mf d \in \Delta_{\mh H}} HH_n (\phi^*_{\mf d}) \;:\; 
\bigoplus_{i=1}^{n_{\mf F}} \Omega^n (\mf t^{Q_i}) \to \bigoplus_{g \in \langle W \Gamma \rangle} 
\Omega^n (\mf t^g) 
\]
is in general not $\mc O (\mf t)^{W\Gamma}$-linear for these module structures.

\begin{lem}\label{lem:4.3}
The map $HH_n (\phi^*)$ is injective. For each $\epsilon \in \C$, the image of 
$HH_n (\phi^*_{\mf d}) \circ HH_n (\mc F_\epsilon)$ is contained in $\bigoplus_{g \in \langle 
W \Gamma \rangle} (\Omega^n (\mf t^g) \otimes \natural^g )^{Z_{W \Gamma} (g)}$. 
\end{lem}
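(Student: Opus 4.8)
The plan is to prove the two assertions separately: the injectivity of $HH_n(\phi^*)$ reduces, after identifying it with a map already studied in Section \ref{sec:crossed}, to Lemma \ref{lem:2.9}; the image statement follows from the equivariance of the families $\nu^{\epsilon,\mf d}_{g,\cdot}$ recorded in Lemma \ref{lem:4.6}.

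\emph{Injectivity.} First I would unwind the definition $HH_n(\phi^*)=\bigoplus_{\mf d}HH_n(\phi^*_{\mf d})$. Since every family $\mf F(Q_i,\sigma_i)$ lies in exactly one $R_t(\mh H)^{\mf d}$, the domain $\bigoplus_i\Omega^n(\mf t^{Q_i})$ breaks up over $\mf d$, and summing the $HH_n(\phi^*_{\mf d})$ over $\mf d$ gives
\[
HH_n(\phi^*)=\bigoplus\nolimits_{g\in\langle W\Gamma\rangle}\ \sum\nolimits_{i=1}^{n_{\mf F}}\lambda_{g,i}\,HH_n(\phi_{g,i}^*)\ :\ \bigoplus\nolimits_{i=1}^{n_{\mf F}}\Omega^n(\mf t^{Q_i})\longrightarrow\bigoplus\nolimits_{g\in\langle W\Gamma\rangle}\Omega^n(\mf t^g),
\]
which is exactly the shape of the map \eqref{eq:2.30}. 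Moreover the scalars $\lambda_{g,i}$ and the maps $\phi_{g,i}$ are, by their very definition in \eqref{eq:4.11}, the data produced by Lemma \ref{lem:2.7} for the algebra $A=\mc O(\mf t)\rtimes\C[W\Gamma,\natural]=\mh H(\mf t,W\Gamma,0,\natural)$ together with the families $\mf F(Q_i,\zeta_0(\sigma_i))$. By Theorem \ref{thm:4.1} and the bijectivity of $\zeta_0$, these families span $\Q\otimes_\Z R(A)$, and since $\zeta_0$ is a linear bijection they inherit minimality from $\mf F(Q_i,\sigma_i)$ (which we assume chosen minimally, as in \eqref{eq:2.38}). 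Thus $HH_n(\phi^*)$ is literally the map $HH_n(\phi^*)$ of \eqref{eq:2.30} for $A$, and Lemma \ref{lem:2.9} applies.

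\emph{Image of the composite.} Fix $\mf d\in\Delta_{\mh H}$ and $g\in\langle W\Gamma\rangle$, and examine the $g$-component of $HH_n(\phi^*_{\mf d})\circ HH_n(\mc F_\epsilon)$. By functoriality of Hochschild homology and the definitions in \eqref{eq:4.16} and \eqref{eq:4.28}, this component equals $\sum_{i\prec\mf d}\lambda_{g,i}\,HH_n\big(\phi_{g,i}^*\circ\mc F_{Q_i,\zeta_\epsilon(\sigma_i)}\big)$, and $\phi_{g,i}^*\circ\mc F_{Q_i,\zeta_\epsilon(\sigma_i)}$ is the algebra homomorphism attached to the algebraic family $v\mapsto\pi(Q_i,\zeta_\epsilon(\sigma_i),\phi_{g,i}(v))$ of $\mh H(\mf t,W\Gamma,\epsilon k,\natural)$-representations over $\mf t^g$. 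By Lemma \ref{lem:2.12} this sum depends only on the algebraic family of virtual representations $v\mapsto\sum_{i\prec\mf d}\lambda_{g,i}\pi(Q_i,\zeta_\epsilon(\sigma_i),\phi_{g,i}(v))=\nu^{\epsilon,\mf d}_{g,v}$ from \eqref{eq:4.8}; denote the resulting map on $HH_n$ by $\Psi_g$. Lemma \ref{lem:4.6}.b, applied with $hgh^{-1}=g$, says that for $h\in Z_{W\Gamma}(g)$ one has $\nu^{\epsilon,\mf d}_{g,hv}=\natural^g(h)\,\nu^{\epsilon,\mf d}_{g,v}$ in $\C\otimes_\Z R(\mh H(\mf t,W\Gamma,\epsilon k,\natural))$; equivalently, pulling this family back along the linear automorphism $v\mapsto hv$ of $\mf t^g$ multiplies it by the scalar $\natural^g(h)$. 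Applying Lemma \ref{lem:2.12} once more — to the two families $v\mapsto\nu^{\epsilon,\mf d}_{g,v}$ and $v\mapsto\nu^{\epsilon,\mf d}_{g,hv}$, which differ by this scalar — together with the functoriality of $HH_n$ and the HKR identification $HH_n\big(\mc O(\mf t^g)\otimes\End_\C(-)\big)\cong\Omega^n(\mf t^g)$, the same relation transfers to $\Psi_g$: the pullback of $\Psi_g(x)$ along $v\mapsto hv$ equals $\natural^g(h)\,\Psi_g(x)$ for every $x$. This is exactly the condition, cf. \eqref{eq:2.34} and \eqref{eq:2.16}, for $\Psi_g(x)$ to lie in $(\Omega^n(\mf t^g)\otimes\natural^g)^{Z_{W\Gamma}(g)}$. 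Letting $g$ range over $\langle W\Gamma\rangle$ gives the asserted containment.

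I expect the principal difficulty to be bookkeeping rather than a new idea: one must carefully check that $HH_n(\phi^*)$ coincides with the Section \ref{sec:crossed} map \eqref{eq:2.30} for $\mc O(\mf t)\rtimes\C[W\Gamma,\natural]$ (so that the minimality hypothesis of Lemma \ref{lem:2.9} is met), and that the twisted $Z_{W\Gamma}(g)$-equivariance produced by Lemma \ref{lem:4.6}.b is phrased with the same convention as in the definition of $(\Omega^n(\mf t^g)\otimes\natural^g)^{Z_{W\Gamma}(g)}$. The genuinely non-formal step is the transfer, via Lemma \ref{lem:2.12}, of an equivariance property of a family of virtual representations to the map it induces on Hochschild homology — but this is precisely what that lemma was set up to allow.
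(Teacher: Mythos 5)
Your proof is correct and follows essentially the same route as the paper: the injectivity is reduced to Lemma \ref{lem:2.9} (you make explicit that $HH_n(\phi^*)$ is literally the map \eqref{eq:2.30} for $\mc O(\mf t)\rtimes\C[W\Gamma,\natural]$ and the families $\mf F(Q_i,\zeta_0(\sigma_i))$, where the paper just says "in the same way as"), and the image statement is obtained, exactly as in the paper, by recognizing the $g$-component of the composite as the map induced by the family of virtual representations $\nu^{\epsilon,\mf d}_{g,v}$ and then combining Lemma \ref{lem:2.12} with the equivariance from Lemma \ref{lem:4.6}.b. The only caveat, which you already flag, is that the minimality of the chosen families (needed for Lemma \ref{lem:2.9}) must be assumed; the paper formally imposes it only just after this lemma.
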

\begin{proof}
The injectivity can be shown in the same way as in Lemma \ref{lem:2.9}. Note that 
\[
HH_n (\phi^*_{\mf d}) \circ HH_n (\mc F_\epsilon) = \bigoplus\nolimits_{g \in \langle W \Gamma 
\rangle} \sum\nolimits_{i=1, i \prec \mf d}^{n_{\mf F}}
\lambda_{g,i} HH_n (\phi_{g,i}^* \circ \mc F_{Q_i, \zeta_\epsilon (\sigma_i)}) .
\]
The specialization of this expression at $(g,v)$ comes from the virtual representation 
$\nu^{\epsilon,\mf d}_{g,v}$ of $\mh H (\mf t,W\Gamma,\epsilon k,\natural)$. 
By Lemma \ref{lem:2.12} the map
\[
\mr{ev}_v \circ \mr{gtr} \circ \sum\nolimits_{i=1}^{n_{\mf F}} \lambda_{g,i} 
C_* \big( \phi_{g,i}^* \circ \mc F_{Q_i, \zeta_\epsilon (\sigma_i)} \big)
\]
cannot distinguish equivalent virtual representations. In combination with Lemma \ref{lem:4.6}.b we
find that the image of $\sum_{i=1}^{n_{\mf F}} \lambda_{g,i} HH_n ( \phi_{g,i}^* \circ \mc F_{Q_i, 
\zeta_\epsilon (\sigma_i)})$ consists of differential forms that transform as $(\natural^g )^{-1}$
under the action of $Z_{W \Gamma} (g)$.
\end{proof}

With the procedure from page \pageref{eq:2.38} we can achieve that our set of 
algebraic families of $\mh H$-representations $\mf F (Q_i,\sigma_i)$ minimally spans 
$\Q \otimes_\Z R(\mh H)$. Thus our new, reduced collection of algebraic families satisfies the
three properties listed on page \pageref{eq:2.38}.

We are ready to prove the description of the Hochschild homology of $\mh H$ in the style
of the trace Paley--Wiener theorem for reductive $p$-adic groups \cite{BDK}. Recall
that still all parameters $k_\alpha$ are real.

\begin{thm}\label{thm:4.4}
In the above setting we fix $\epsilon \in \C$. 
\enuma{
\item The map 
\[
HH_n (\mc F_\epsilon) : HH_n (\mh H (\mf t,W\Gamma,\epsilon k,\natural)) \to
\bigoplus\nolimits_{i=1}^{n_{\mf F}} \Omega^n (\mf t^{Q_i})
\]
is a $\mc O (\mf t)^{W \Gamma}$-linear injection, for the module structure from Lemma \ref{lem:4.5}.
\item $HH_n (\phi^*)$ is a bijection
\[
HH_n (\mc F_\epsilon) HH_n (\mh H (\mf t,W\Gamma,\epsilon k,\natural)) \to 
\bigoplus\nolimits_{g \in \langle W \Gamma \rangle} (\Omega^n (\mf t^g) 
\otimes \natural^g )^{Z_{W \Gamma} (g)} ,
\]
and it is $\mc O (\mf t)^{W\Gamma}$-linear with respect to the natural module structures.
\item In degree $n=0$ the condition on an element $\omega$ of $\bigoplus_{i=1}^{n_{\mf F}} \mc O
(\mf t^{Q_i})$ to belong to the image of $HH_0 (\mc F_\epsilon)$ is:
\begin{multline*}
\text{if } \mu_j \in \C, 1 \leq i_j \leq n_{\mf F}, \lambda_{i_j} \in \mf t^{Q_{i_j}} \text{ and } 
\sum\nolimits_j \mu_j \pi (Q_{i_j}, \zeta_\epsilon (\sigma_{i_j}),\lambda_{i_j}) = 0 \\
\text{ in }  \C \otimes_\Z R (\mh H (\mf t, W\Gamma,\epsilon k, \natural)), \text{ then } 
\sum\nolimits_j \mu_j \omega (\lambda_{i_j}) = 0 .
\end{multline*}
Equivalently, $\omega$ must determine a linear function on $\C \otimes_\Z R (\mh H (\mf t,
W\Gamma,\epsilon k, \natural))$, via the natural pairing \eqref{eq:2.42}. This yields
an isomorphism of $\mc O (\mf t)^{W\Gamma}$-modules
\[
HH_0 (\mh H (\mf t,W\Gamma,\epsilon k,\natural)) \cong \big( \C \otimes_\Z R (\mh H (\mf t,
W\Gamma,\epsilon k, \natural)) \big)^*_{\mr{reg}} .
\]
}
\end{thm}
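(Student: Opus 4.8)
The plan is to mimic the proof of Theorem~\ref{thm:2.6}, reading off the representation-theoretic input from Lemmas~\ref{lem:4.2}, \ref{lem:4.3} and~\ref{lem:4.5} and from the comparison map $\zeta_\epsilon$ of Theorem~\ref{thm:4.1}. The base case is $\epsilon = 0$: there $\mh H (\mf t, W\Gamma, 0, \natural) = \mc O (\mf t) \rtimes \C [W\Gamma, \natural]$, the families $\mf F (Q_i, \zeta_0 (\sigma_i))$ minimally span $\Q \otimes_\Z R$ by the bijectivity of $\zeta_0$, and Theorem~\ref{thm:2.6} (with Proposition~\ref{prop:2.8}) is exactly the case $\epsilon = 0$, giving in addition that $HH_n (\phi^*) \circ HH_n (\mc F_0) = \mathrm{id}$ on $\bigoplus_{w} (\Omega^n (\mf t^w) \otimes \natural^w )^{Z_{W\Gamma}(w)}$ under the identification \eqref{eq:4.6} (which at $k = 0$ is \eqref{eq:2.33}). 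Part~(a) for general $\epsilon$ is then immediate: $HH_n (\mc F_\epsilon) = \bigoplus_i HH_n (\mc F_{Q_i, \zeta_\epsilon (\sigma_i)})$ is a direct sum of $\mc O (\mf t)^{W\Gamma}$-linear maps by Lemma~\ref{lem:4.5} (for the $\epsilon$-dependent module structure) and is injective by Lemma~\ref{lem:4.2}.

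For part~(b), $HH_n (\phi^*)$ is injective by Lemma~\ref{lem:4.3}, so together with~(a) the composition $\Phi_\epsilon := HH_n (\phi^*) \circ HH_n (\mc F_\epsilon)$ is injective, and by Lemma~\ref{lem:4.3} its image lies in $\bigoplus_{g \in \langle W\Gamma \rangle} (\Omega^n (\mf t^g) \otimes \natural^g )^{Z_{W\Gamma}(g)}$. The remaining point, surjectivity, I would settle by a degree argument. Using \eqref{eq:4.6} to identify all $HH_n (\mh H (\mf t, W\Gamma, \epsilon k, \natural))$ with one fixed vector space $X_n$ — this identification, hence the filtration of $X_n$ by the polynomial degree of the differential forms on the $\mf t^w$, being independent of $\epsilon$ since the subcomplexes $T_w C_* (\mf t/(w-1)\mf t)$ do not depend on $k$ — I claim $\Phi_\epsilon$ is a degree-non-increasing endomorphism of $X_n$ whose induced map on the associated graded is $\Phi_0$. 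Indeed $\zeta_\epsilon (\sigma_i) = \sigma_i \circ m_\epsilon$ by Theorem~\ref{thm:4.1}(iv) (each $\sigma_i$ being tempered with real central character), so for a polynomial $f$ the operator $\pi (Q_i, \zeta_\epsilon (\sigma_i), \lambda_i)(f)$ is, as a function of $\lambda_i$, a matrix of polynomials of degree at most $\deg f$ whose top-degree part is $\epsilon$-independent and equals $\pi (Q_i, \zeta_0 (\sigma_i), \lambda_i)(f)$; since $\mr{gtr}$ and the Hochschild--Kostant--Rosenberg map respect these degrees and the $\phi_{g,i}^*$ are linear, $\mathrm{gr}\,\Phi_\epsilon = \Phi_0 = \mathrm{id}$. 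On each finite-dimensional filtered piece $F_d X_n$ the operator $\Phi_\epsilon$ is then unipotent, hence bijective, and letting $d$ grow gives the asserted bijection; its $\mc O (\mf t)^{W\Gamma}$-linearity is inherited from~(a).

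Part~(c) is the degree-$0$ specialization, run exactly as in the proof of Theorem~\ref{thm:2.6}(b): by \eqref{eq:2.45} an element $\omega = HH_0 (\mc F_\epsilon)(a)$ has $\omega_i (\lambda_i) = \mr{tr}\, \pi (Q_i, \zeta_\epsilon (\sigma_i), \lambda_i)(a)$, so it respects every linear relation among the $\pi (Q_i, \zeta_\epsilon (\sigma_i), \lambda_i)$ in $\C \otimes_\Z R (\mh H (\mf t, W\Gamma, \epsilon k, \natural))$; conversely, since the virtual representations $\nu^\epsilon_{g,v}$ of \eqref{eq:4.8} span this Grothendieck group (Lemma~\ref{lem:2.5}, Theorem~\ref{thm:4.1}) and become a basis after dividing out the equivariance relations of Lemma~\ref{lem:4.6}, any $\omega$ respecting those relations determines a regular functional on the Grothendieck group and, by part~(b), comes from $HH_0 (\mc F_\epsilon)$. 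Under these identifications $HH_0 (\phi^*) \circ HH_0 (\mc F_\epsilon)$ becomes the canonical map \eqref{eq:2.43}, which yields $HH_0 (\mh H (\mf t, W\Gamma, \epsilon k, \natural)) \cong \big( \C \otimes_\Z R (\mh H (\mf t, W\Gamma, \epsilon k, \natural)) \big)^*_{\mr{reg}}$, an isomorphism of $\mc O (\mf t)^{W\Gamma}$-modules by Lemma~\ref{lem:4.5}.

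The main obstacle is the surjectivity in part~(b): in Section~\ref{sec:crossed} it followed at once from the chain-level identity of Proposition~\ref{prop:2.8}, but that computation relies on the semisimplicity of the $\pi (Q_i, \zeta_0 (\sigma_i), \lambda_i)$, which is lost for $\epsilon \neq 0$. The substitute is the leading-term analysis of $\zeta_\epsilon$ via Theorem~\ref{thm:4.1}(iv), and the points needing care are that the degree filtration transported through \eqref{eq:4.6} is genuinely $\epsilon$-independent and is respected both by $\mr{gtr} \circ C_* (\mc F_{Q_i,\zeta_\epsilon (\sigma_i)})$ and by the maps $\phi_{g,i}^*$.
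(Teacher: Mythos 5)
Your parts (a) and (c) coincide with the paper's proof: (a) really is just Lemmas \ref{lem:4.5} and \ref{lem:4.2} combined, and (c) is obtained, as in the paper, by rerunning the proof of Theorem \ref{thm:2.6}.b with $\zeta_\epsilon$ transporting the spanning and basis properties of the virtual representations $\nu_{g,v}^\epsilon$. The injectivity of $HH_n (\phi^*)$ in (b) is also argued as in the paper (Lemma \ref{lem:2.9} plus minimality of the families).

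For the surjectivity in (b) you take a genuinely different route. The paper identifies all the $HH_n (\mh H (\mf t,W\Gamma,\epsilon k,\natural))$ with one fixed space via \eqref{eq:4.6}, uses that $HH_n (\mc F_\epsilon)$ depends \emph{algebraically} on $\epsilon$, and works in the finite-dimensional quotients $\bigoplus_i \Omega^n (\mf t^{Q_i}) / I_S^m \Omega^n (\mf t^{Q_i})$: there the image equals the target at $\epsilon = 0$ by Theorem \ref{thm:2.6}, hence for small $|\epsilon|$ by semicontinuity of rank, hence for all $\epsilon$ by the scaling isomorphisms $m_\eta$; surjectivity on all formal completions plus the Nakayama-type Lemma \ref{lem:1.1} then gives global surjectivity. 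You instead filter the fixed target by polynomial degree and show $\Phi_\epsilon = HH_n (\phi^*) \circ HH_n (\mc F_\epsilon)$ is unipotent with $\mr{gr}\, \Phi_\epsilon = \Phi_0 = \mr{id}$ (Proposition \ref{prop:2.8}). This is more self-contained (no formal completions, no Lemma \ref{lem:1.1}, no semicontinuity) and yields an explicit triangular form of $\Phi_\epsilon$. The cost is a finer input about \eqref{eq:4.6} than the paper needs: besides algebraic dependence on $\epsilon$, you require that a degree-$d$ class in the $w$-summand admits, for each $\epsilon$, a cycle representative of total tensor degree $d+n$ in the $k$-independent complex $T_w C_* (\mc O (\mf t / (w-1)\mf t))$, on which the chain-level comparison of $\mr{gtr} \circ C_* (\mc F_{Q_i, \zeta_\epsilon (\sigma_i)})$ with its $\epsilon = 0$ counterpart can be read off. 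Your leading-term analysis of the operators $\pi (Q_i, \zeta_\epsilon (\sigma_i), \lambda_i)(f)$ is correct (both the cross-relation corrections, which carry a factor $\epsilon k$, and the shift of the $\mc O (\mf t_{Q_i})$-weights of $\zeta_\epsilon (\sigma_i)$ lower the degree in $\lambda_i$; and $\pi (T_g)$ is $\epsilon$-independent by Theorem \ref{thm:4.1}.v). But the existence and degree control of the representatives is part of the content of \cite{SolHomGHA} — note that for $\epsilon k \neq 0$ the bar differential changes, so the cycles in $T_w C_*$ are not literally those of the crossed product — and should be quoted explicitly rather than taken for granted. If you do not want to unpack that, the paper's softer deformation argument is the safer substitute for this one step.
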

\begin{proof}
(a) is just a restatement of Lemmas \ref{lem:4.5} and \ref{lem:4.2}.\\
(b) By Lemma \ref{lem:2.9} and the third property (from page \pageref{eq:2.37}) of our set of algebraic
families $\mf F (Q_i,\sigma_i)$, $HH_n (\phi^*)$ is injective.

Consider a finite subset $S$ of $\bigsqcup_{i=1}^{n_{\mf F}} \mf t^{Q_i}$ and let 
$I_S \subset \bigoplus_{i=1}^{n_{\mf F}} \mc O (\mf t^{Q_i})$ be the ideal of functions 
that vanish on $S$. For $m \in \N$ let $J_n^m$ (resp. $\tilde{J}^n_{m,\epsilon}$) be the image of
\begin{equation}\label{eq:4.12}
HH_n (\phi^*)^{-1} \Big( \bigoplus\nolimits_{g \in \langle W \Gamma \rangle} 
(\Omega^n (\mf t^g) \otimes \natural^g )^{Z_{W \Gamma} (g)} \Big)
\end{equation}
(resp. $HH_n (\mc F_\epsilon)$) in
\begin{equation}\label{eq:4.9}
\bigoplus\nolimits_{i=1}^{n_{\mf F}} \Omega^n (\mf t^{Q_i}) / I_S^m \Omega^n (\mf t^{Q_i}) .
\end{equation}
By Theorem \ref{thm:2.6} $\tilde{J}^n_{m,0} = J_m^n$. As $HH_n (\mc F_\epsilon)$
depends algebraically on $\epsilon \in \C$ and $J^n_m$ has finite dimension,
$\tilde{J}^n_{m,\epsilon} = J^n_m$ when $|\epsilon|$ is sufficiently small. The argument
with $m_\eta$ in the proof of Lemma \ref{lem:4.2} then shows that $\tilde{J}^n_{m,\epsilon}
= J^n_m$ for all $\epsilon \in \C$. 

Fix a $\mc O (\mf t)^{W\Gamma}$-character $W\Gamma \lambda$. Choose $S$ so that it contains
all $\lambda_i \in \mf t^{Q_i}$ for which $W \Gamma \lambda$ is the central character of 
$\pi (Q_i, \zeta_\epsilon (\sigma_i), \lambda_i)$. With Lemma \ref{lem:4.5} it 
follows from the above that the map $HH_n (\mc F_\epsilon)$ induces a surjection between the 
formal completions at $W\Gamma \lambda$ of the $\mc O (\mf t)^{W\Gamma}$-modules 
$HH_n (\mh H (\mf t,W\Gamma,\epsilon k,\natural))$ and \eqref{eq:4.12}.
Thus the quotient of \eqref{eq:4.12} by the image of $HH_n (\mc F_\epsilon)$ is a 
$\mc O (\mf t)^{W\Gamma}$-module $M$ all whose formal completions are 0. It is finitely 
generated because $\Omega^n (\mf t^{Q_i})$ is finitely generated as 
$\mc O (\mf t)^{W\Gamma}$-module. A general argument, which we formulate as  Lemma \ref{lem:1.1}
below, says that $M = 0$. Hence the image of $HH_n (\mc F_\epsilon)$ is as claimed. \\
(c) The description of the image of $HH_0 (\mc F_\epsilon)$ was proven in the case 
$\epsilon = 0$ in Theorem \ref{thm:2.6}. In combination with Theorem \ref{thm:4.1}, the same 
argument applies when $\epsilon \in \C^\times$. 
\end{proof}

Let $\mc O (V)$ be the ring of regular functions on a complex affine variety $V$. For each 
$v \in V$, let $I_v \subset \mc O (V)$ be the maximal ideal of functions vanishing at $v$. 
For any $\mc O (V)$-module $M$, we can form the completion
\[
\hat M_v = \varprojlim\nolimits_n M / I_v^n M .
\]

\begin{lem}\label{lem:1.1}
Let $M$ be a finitely generated $\mc O (V)$-module, such that $\hat M_v = 0$ for all $v \in V$.
Then $M = 0$. 
\end{lem}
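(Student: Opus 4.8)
The plan is to argue by contradiction: assume $M \neq 0$ and exhibit a point $v \in V$ with $\hat M_v \neq 0$. Throughout write $R = \mc O (V)$; since $V$ is a complex affine variety, $R$ is a finitely generated $\C$-algebra, hence Noetherian, and by the Nullstellensatz its maximal ideals are exactly the $I_v$ for $v \in V$.

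First I would locate a suitable point. Because $M$ is a nonzero finitely generated $R$-module, its annihilator $\mr{Ann}_R (M)$ is a proper ideal, so it is contained in some maximal ideal $I_v$; equivalently $v$ lies in the support of $M$, so the localization $M_{I_v}$ is nonzero. Now $M_{I_v}$ is a finitely generated module over the Noetherian local ring $R_{I_v}$, with maximal ideal $I_v R_{I_v}$, so Nakayama's lemma gives $M_{I_v} \neq I_v M_{I_v}$, i.e.\ $M_{I_v} / I_v M_{I_v} \neq 0$. Since $M_{I_v}/I_v M_{I_v} = (M/I_v M)_{I_v}$, this forces $M/I_v M \neq 0$.

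Finally I would feed this back into the completion. The transition maps $M/I_v^{n+1} M \to M/I_v^n M$ of the inverse system defining $\hat M_v = \varprojlim_n M/I_v^n M$ are all surjective, hence the canonical projection $\hat M_v \to M/I_v M$ onto the $n = 1$ term is surjective as well. Therefore $\hat M_v$ surjects onto the nonzero vector space $M/I_v M$, so $\hat M_v \neq 0$, contradicting the hypothesis. Hence $M = 0$.

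There is no serious obstacle here; the argument is routine commutative algebra. A more streamlined alternative is to use that, for $R$ Noetherian and $M$ finitely generated, $\hat M_v \cong M \otimes_R \hat R_{I_v}$ with $\hat R_{I_v}$ faithfully flat over $R_{I_v}$, so $\hat M_v = 0$ is equivalent to $M_{I_v} = 0$, and a finitely generated module that vanishes at every maximal ideal is zero; but the elementary route above avoids invoking completion–tensor identities. The only point that deserves a moment's care is that the defining inverse system has surjective structure maps, which is what guarantees that $\hat M_v$ actually surjects onto $M/I_v M$ rather than merely mapping to it.
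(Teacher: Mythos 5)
Your proof is correct and is essentially the same routine commutative-algebra argument as the paper's: both reduce to the fact that a finitely generated $\mc O (V)$-module $M$ with $M = I_v M$ at every point of its support must vanish, via Nakayama's lemma. Your contrapositive formulation, using the surjection $\hat M_v \to M / I_v M$ coming from the surjective transition maps, is if anything slightly more explicit than the paper's final step, which leaves the Nakayama input implicit.
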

\begin{proof}
For any $m \in M$, the image of $m$ in $\hat M_v$ is zero, so $m \in I_v^n M$ for all $n \in \N$.
Hence $M = I_v^n M$ for all $v \in V$ and all $n \in \N$.

As $M$ is finitely generated, we can write $M = \mc O (V)^r / N$ for some $\mc O (V)$-submodule
$N$ of $\mc O (V)^r$. In combination with the above we find
\[
\mc O (V)^r / N = I_v^n (\mc O (V)^r / N) = (I_v^n \mc O (V)^r + N ) / N
\]
Therefore $\mc O (V)^r = I_v^n \mc O (V)^r + N$ for all $v \in V$ and all $n \in \N$.
This is only possible when $N = \mc O (V)^r$, so $M = 0$.
\end{proof}

Like in \eqref{eq:2.36}, we can vary on \eqref{eq:4.10} and define the 
$\mc O (\mf t)^{W \Gamma}$-linear map
\begin{equation}\label{eq:4.19}
HH_n (\tilde \phi^*) = \bigoplus_{g \in W \Gamma} \sum_{i=1}^{n_{\mf F}} \lambda_{g,i}
HH_n (\phi_{g,i}^*) \;:\; \bigoplus_{i=1}^{n_{\mf F}} \Omega^n (\mf t^{Q_i}) \to 
\bigoplus_{g \in W \Gamma} \Omega^n (\mf t^g) .
\end{equation}
The same arguments as for Corollary \ref{cor:2.11} show that:

\begin{cor}\label{cor:4.10}
There is a $\C$-linear bijection
\[
HH_n (\tilde \phi^*) \circ HH_n (\mc F_\epsilon) \;:\; 
HH_n (\mh H (\mf t,W\Gamma,\epsilon k,\natural)) \to 
\Big( \bigoplus\nolimits_{g \in W \Gamma} \Omega^n (\mf t^g) \otimes \natural^g \Big)^{W \Gamma} .
\]
\end{cor}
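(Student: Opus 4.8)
The plan is to obtain Corollary~\ref{cor:4.10} from Theorem~\ref{thm:4.4} by the same bookkeeping that was used to pass from Theorem~\ref{thm:2.6} to Corollary~\ref{cor:2.11}, now with the role of $V$ played by $\mf t$ and that of $G$ by $W\Gamma$. First I would recall the two descriptions of the target. Applying Lemma~\ref{lem:2.10} with $G = W\Gamma$, the bijections \eqref{eq:2.32} assemble into the canonical isomorphism
\[
\bigoplus\nolimits_{g \in \langle W\Gamma\rangle} \big( \Omega^n(\mf t^g) \otimes \natural^g \big)^{Z_{W\Gamma}(g)} \;\cong\; \Big( \bigoplus\nolimits_{g \in W\Gamma} \Omega^n(\mf t^g) \otimes \natural^g \Big)^{W\Gamma}
\]
already recorded in \eqref{eq:4.6} and \eqref{eq:2.33}: a $W\Gamma$-invariant element $\sum_g T_g \omega_g$ on the right corresponds to the tuple $(\omega_g)_{g \in \langle W\Gamma\rangle}$ on the left, the remaining components being forced by $\omega_{hgh^{-1}} = \natural^g(h)\,h\cdot\omega_g$.

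The second step is to observe that $HH_n(\tilde\phi^*)$, as defined in \eqref{eq:4.19}, is nothing but $HH_n(\phi^*)$ of \eqref{eq:4.10} followed by this extension: for $g \in \langle W\Gamma\rangle$ the $g$-component of $HH_n(\tilde\phi^*)$ coincides with that of $HH_n(\phi^*)$, while the components at a general conjugate $hgh^{-1}$ are determined by the conventions $\lambda_{hgh^{-1},i}=\natural^g(h)\lambda_{g,i}$, $\phi_{hgh^{-1},i}=\phi_{g,i}\circ h^{-1}$. Well-definedness of these extended data is exactly Lemma~\ref{lem:2.10}, and their compatibility with the equivariance of the virtual families $\nu^\epsilon_{g,v}$ is Lemma~\ref{lem:4.6}.a; together these show that $HH_n(\tilde\phi^*)\circ HH_n(\mc F_\epsilon)$ takes values in the $W\Gamma$-invariant subspace on the right and equals $HH_n(\phi^*)\circ HH_n(\mc F_\epsilon)$ post-composed with the isomorphism of the previous paragraph.

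Finally I would invoke Theorem~\ref{thm:4.4}: part~(a) gives that $HH_n(\mc F_\epsilon)$ is injective, and part~(b) gives that $HH_n(\phi^*)$ restricts to a bijection from $HH_n(\mc F_\epsilon)HH_n(\mh H(\mf t,W\Gamma,\epsilon k,\natural))$ onto $\bigoplus_{g\in\langle W\Gamma\rangle}\big(\Omega^n(\mf t^g)\otimes\natural^g\big)^{Z_{W\Gamma}(g)}$. Composing with the identification above, $HH_n(\tilde\phi^*)\circ HH_n(\mc F_\epsilon)$ is an injection that is bijective onto the $W\Gamma$-invariants, which is the assertion. I do not expect a genuine obstacle: there is no new input beyond Theorem~\ref{thm:4.4} and Lemma~\ref{lem:2.10}, and the argument is essentially verbatim that of Corollary~\ref{cor:2.11}. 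The only place deserving a moment's care is checking that the extension in \eqref{eq:4.19} reproduces the $W\Gamma$-equivariant identification \eqref{eq:2.32} component by component, so that no factor $\natural^g(h)$ is lost or doubled; the finer questions about the $\mc O(\mf t)^{W\Gamma}$-module structure, which these twists do affect and which require the decomposition of Theorem~\ref{thm:4.7}, lie outside the purely $\C$-linear statement proved here.
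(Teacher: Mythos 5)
Your argument is correct and is essentially the paper's own: the paper deduces Corollary \ref{cor:4.10} by "the same arguments as for Corollary \ref{cor:2.11}", i.e.\ by combining Theorem \ref{thm:4.4} with the extension of $HH_n(\phi^*)$ to $HH_n(\tilde\phi^*)$ via the conventions $\lambda_{hgh^{-1},i}=\natural^g(h)\lambda_{g,i}$, $\phi_{hgh^{-1},i}=\phi_{g,i}\circ h^{-1}$ and the identification \eqref{eq:2.33} of $\bigoplus_{g\in\langle W\Gamma\rangle}(\Omega^n(\mf t^g)\otimes\natural^g)^{Z_{W\Gamma}(g)}$ with the $W\Gamma$-invariants of the full sum. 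Your attention to the equivariance checks (Lemma \ref{lem:2.10}, Lemma \ref{lem:4.6}.a) and to the fact that the module-structure subtleties lie outside this $\C$-linear statement matches the paper's treatment.
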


We note that in Corollary \ref{cor:4.10} the target does not depend on $\epsilon$. In fact, in
Theorem \ref{thm:4.4} the map $HH_n (\phi^*)$ does not depend on $\epsilon$ either, and the
same goes for the subspace 
\[
HH_n (\mc F_\epsilon) HH_n (\mh H (\mf t, W\Gamma, \epsilon k, \natural)) \subset 
\bigoplus\nolimits_{i=1}^{n_{\mf F}} \Omega^n (\mf t^{Q_i}) ,
\]
Hence we can define a $\C$-linear bijection
\[
HH_n (\zeta_0) := HH_n (\mc F_1)^{-1} HH_n (\mc F_0) \;:\; 
HH_n (\mh H (\mf t, W\Gamma, 0, \natural)) \to HH_n (\mh H (\mf t, W\Gamma, k, \natural)).
\]

\begin{prop}\label{prop:4.13}
$HH_n (\zeta_0)$ is the unique $\C$-linear bijection
\[
HH_n (\mh H (\mf t, W\Gamma, 0, \natural)) \to HH_n (\mh H (\mf t, W\Gamma, k, \natural))
\]
such that
\[
HH_n (\mc F_{Q,\sigma}) \circ HH_n (\zeta_0) = HH_n (\mc F_{Q,\zeta_0 (\sigma)})
\]
for all algebraic families of $\mh H$-representations $\mf F (Q,\sigma)$.
\end{prop}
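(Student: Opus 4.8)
The plan is to handle uniqueness and existence separately, both resting on the observation that the asserted identity is built into the definition of $HH_n(\zeta_0)$ on the families used to construct $HH_n(\mc F_\epsilon)$. Since $\zeta_1=\mr{id}$ we have $HH_n(\mc F_1)=\bigoplus_i HH_n(\mc F_{Q_i,\sigma_i})$ and $HH_n(\mc F_0)=\bigoplus_i HH_n(\mc F_{Q_i,\zeta_0(\sigma_i)})$, so the defining relation $HH_n(\mc F_1)\circ HH_n(\zeta_0)=HH_n(\mc F_0)$ reads componentwise as
\[
HH_n(\mc F_{Q_i,\sigma_i})\circ HH_n(\zeta_0)=HH_n(\mc F_{Q_i,\zeta_0(\sigma_i)}),\qquad i=1,\dots,n_{\mf F}.
\]
For uniqueness, if $\Phi$ is any $\C$-linear bijection with the stated property, then specializing it to the families $\mf F(Q_i,\sigma_i)$ yields $HH_n(\mc F_1)\circ\Phi=HH_n(\mc F_0)$; since $HH_n(\mc F_1)=HH_n(\mc F_\epsilon)|_{\epsilon=1}$ is injective by Lemma \ref{lem:4.2}, this forces $\Phi=HH_n(\mc F_1)^{-1}\circ HH_n(\mc F_0)=HH_n(\zeta_0)$.

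For existence I would promote the identity from the chosen families to an arbitrary family $\mf F(Q,\sigma)$ by expanding the latter in terms of the former. First I would rerun the argument of Lemma \ref{lem:2.7} with $\pi(Q,\sigma,\cdot)$ in the role of $\nu_{g,\cdot}$: each $\pi(Q,\sigma,\lambda)$ carries an $\mc O(\mf t)^{W\Gamma}$-character by \cite[Theorem 6.4]{BaMo}, so for generic $\lambda$ it expands in the minimal spanning families with each contributing parameter pinned down, up to a single $W\Gamma$-orbit, by matching central characters; fixing the choices at one generic point and extending algebraically produces scalars $\mu_i\in\C$ and maps $\psi_i:\mf t^Q\to\mf t^{Q_i}$, each induced by an element of $W\Gamma$ (hence regular), with
\[
\pi(Q,\sigma,\lambda)=\sum\nolimits_i\mu_i\,\pi(Q_i,\sigma_i,\psi_i(\lambda))\qquad\text{in }\C\otimes_\Z R(\mh H),\ \ \lambda\in\mf t^Q .
\]
Since $\sigma$ and the $\sigma_i$ are elliptic, hence tempered, Theorem \ref{thm:4.1}(ii) applies to each term and gives $\pi(Q,\zeta_0(\sigma),\lambda)=\sum_i\mu_i\,\pi(Q_i,\zeta_0(\sigma_i),\psi_i(\lambda))$ in $\C\otimes_\Z R(\mh H(\mf t,W\Gamma,0,\natural))$. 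As $\psi_i^*\circ\mc F_{Q_i,\sigma_i}$ and $\psi_i^*\circ\mc F_{Q_i,\zeta_0(\sigma_i)}$ are algebraic families over $\mf t^Q$ with members $\pi(Q_i,\sigma_i,\psi_i(\lambda))$ and $\pi(Q_i,\zeta_0(\sigma_i),\psi_i(\lambda))$, Lemma \ref{lem:2.12} converts both identities into
\[
HH_n(\mc F_{Q,\sigma})=\sum\nolimits_i\mu_i\,HH_n(\psi_i^*\circ\mc F_{Q_i,\sigma_i}),\qquad HH_n(\mc F_{Q,\zeta_0(\sigma)})=\sum\nolimits_i\mu_i\,HH_n(\psi_i^*\circ\mc F_{Q_i,\zeta_0(\sigma_i)}).
\]
Composing the first with $HH_n(\zeta_0)$, using the factorization $HH_n(\psi_i^*\circ\mc F_{Q_i,\sigma_i})=HH_n(\psi_i^*)\circ HH_n(\mc F_{Q_i,\sigma_i})$ and the componentwise relation recorded above, I would then obtain
\[
HH_n(\mc F_{Q,\sigma})\circ HH_n(\zeta_0)=\sum\nolimits_i\mu_i\,HH_n(\psi_i^*)\circ HH_n(\mc F_{Q_i,\zeta_0(\sigma_i)})=HH_n(\mc F_{Q,\zeta_0(\sigma)}),
\]
which is the assertion.

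I expect the one genuinely nontrivial step to be the uniform expansion: one must check that the coefficients $\mu_i$ and the parameter maps $\psi_i$ can be chosen to depend algebraically on $\lambda\in\mf t^Q$ and that an expansion valid on a dense open subset extends by continuity to all of $\mf t^Q$. This is exactly the technical point already settled inside the proof of Lemma \ref{lem:2.7}, so I anticipate it transfers with no new idea; granting it, the remainder is the same formal manipulation of functorial maps on Hochschild homology as in the deduction of Corollary \ref{cor:2.11} from Lemma \ref{lem:2.7}.
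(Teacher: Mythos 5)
Your proof is correct and follows essentially the same route as the paper: uniqueness from the injectivity of $HH_n (\mc F_1)$ (Lemma \ref{lem:4.2}) combined with the componentwise relation \eqref{eq:4.25}, and existence by expanding an arbitrary family $\mf F (Q,\sigma)$ in the chosen minimal families with algebraically varying parameters, transferring the expansion through $\zeta_0$ via Theorem \ref{thm:4.1} and then applying Lemma \ref{lem:2.12}. The only cosmetic difference is that the paper routes the expansion through the basis $\nu^1_{g,v} = \zeta_0^{-1}(\nu_{g,v})$, which makes the equality of coefficients on the two sides automatic, whereas you rerun the Lemma \ref{lem:2.7} argument directly for $\pi (Q,\sigma,\cdot)$ (where one should allow several parameter maps $\psi_{i,j}$ per family index $i$).
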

\begin{proof}
By construction
\begin{equation}\label{eq:4.25}
HH_n (\mc F_{Q_i,\sigma_i}) \circ HH_n (\zeta_0) = HH_n (\mc F_{Q_i,\zeta_0 (\sigma_i)})
\qquad i = 1,\ldots, n_{\mf F} .
\end{equation}
As $\mc F_1$ is built from the $\mc F_{Q_i, \sigma_i}$ and $\mc F_0$ from the
$\mc F_{Q_i, \zeta_0 (\sigma_i)}$, the property \eqref{eq:4.25} already determines 
$HH_n (\zeta_0)$ uniquely. 

It remains to check the condition for an arbitrary algebraic family $\mf F (Q,\sigma)$.
Recall that in Lemma \ref{lem:2.5} we exhibited a basis of $\C \otimes_\Z R (\mc O (\mf t)
\rtimes \C [W\Gamma, \natural])$, consisting of some virtual representations $\nu_{g,v}$.
The virtual representations $\nu_{g,v}^1 = \zeta_0^{-1} (\nu_{g,v})$ form a basis of
$\C \otimes_\Z R (\mh H)$. In \eqref{eq:4.11} we expressed $\nu_{g,v}$ as linear combination of
members of the families $\mf F (Q_i, \zeta_0 (\sigma_i))$, and by definition $\nu_{g,v}^1$ is
almost the same linear combination, only with $\mf F (Q_i, \sigma_i)$ instead. Write
\[
\mr{tr} \, \pi (Q,\sigma,\lambda) = \sum\nolimits_{g,v} c(g,v,\lambda) \nu_{g,v}^1, 
\]
then Theorem \ref{thm:4.1} implies
\[
\mr{tr} \, \pi (Q,\zeta_0(\sigma),\lambda) = \sum\nolimits_{g,v} c(g,v,\lambda) \nu_{g,v} .
\]
Hence there exist $c' (i,\lambda,v_i) \in \C$ such that
\begin{equation}\label{eq:4.27}
\begin{array}{lll}
\pi (Q,\sigma,\lambda) & = & \sum_{i,v_i} c' (i,\lambda,v_i) \, \pi (Q_i,\sigma_i, v_i) ,\\
\pi (Q,\zeta_0 (\sigma),\lambda) & = &
\sum_{i,v_i} c' (i,\lambda,v_i) \, \pi (Q_i,\zeta_0 (\sigma_i), v_i) ,
\end{array}
\end{equation}
in $\C \otimes_\Z R (\mh H)$ and $\C \otimes_\Z R(\mc O (\mf t) \rtimes \C [W\Gamma, \natural])$,
respectively. With \eqref{eq:4.25} we find that
\[
HH_n (\pi (Q,\sigma,\lambda)) \circ HH_n (\zeta) = HH_n (\pi (Q,\zeta_0 (\sigma),\lambda)) .
\]
The same reasoning for all $\lambda \in \mf t^Q$ simultaneously yields the required property
of $HH_n (\zeta_0)$.
\end{proof}

It turns out that the description of $HH_n (\mh H)$ in Theorem \ref{thm:4.4} decomposes further,
such that the decomposition reveals the structure as module over the centre. For 
$\mf d \in \Delta_{\mh H}$ we define $\mc F_{\mf d} = \bigoplus_{i \prec \mf d} \mc F_{Q_i,\sigma_i}$.

\begin{lem}\label{lem:4.8}
\enuma{
\item $HH_n (\mc F_1) HH_n (\mh H) = \bigoplus_{\mf d = [Q,\delta] \in \Delta_{\mh H}} 
HH_n (\mc F_{\mf d}) HH_n (\mh H)$.
\item The subspace
\[
HH_n (\mh H)^{\mf d} := HH_n (\mc F_1 )^{-1} HH_n (\mc F_{\mf d}) HH_n (\mh H) 
\]
of $HH_n (\mh H)$ can be defined canonically, 
without choosing any algebraic families of representations.
}
\end{lem}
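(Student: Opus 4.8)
The plan is to organise everything around the isomorphism in Theorem~\ref{thm:4.4}(b). Set $\Psi := HH_n(\phi^*)\circ HH_n(\mc F_1)$, which by that theorem is a bijection from $HH_n(\mh H)$ onto $M := \bigoplus_{g\in\langle W\Gamma\rangle}\bigl(\Omega^n(\mf t^g)\otimes\natural^g\bigr)^{Z_{W\Gamma}(g)}$. Since every $i\in\{1,\dots,n_{\mf F}\}$ satisfies $i\prec\mf d$ for exactly one $\mf d\in\Delta_{\mh H}$, the partition $\{1,\dots,n_{\mf F}\}=\bigsqcup_{\mf d}\{i:i\prec\mf d\}$ writes $\mc F_1=\bigoplus_i\mc F_{Q_i,\sigma_i}=\bigoplus_{\mf d}\mc F_{\mf d}$ and $HH_n(\phi^*)=\bigoplus_{\mf d}HH_n(\phi^*_{\mf d})$, so $\Psi=\sum_{\mf d}\psi_{\mf d}$ with $\psi_{\mf d}:=HH_n(\phi^*_{\mf d})\circ HH_n(\mc F_{\mf d})$, and $\mathrm{im}(\psi_{\mf d})\subseteq M$ by Lemma~\ref{lem:4.3}. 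I would first record the purely formal identity
\[
HH_n(\mc F_1)^{-1}HH_n(\mc F_{\mf d})HH_n(\mh H)=\bigcap\nolimits_{i\not\prec\mf d}\ker HH_n(\mc F_{Q_i,\sigma_i}),
\]
valid because $HH_n(\mc F_1)$ is injective (Theorem~\ref{thm:4.4}(a)) and an element of its domain has image in $HH_n(\mc F_{\mf d})HH_n(\mh H)$ precisely when all components of that image outside the index set $\{i:i\prec\mf d\}$ vanish; in particular the subspace in the statement is well defined.

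The key input is the injectivity of $HH_n(\phi^*)$ from Lemma~\ref{lem:4.3}. Regarding the source $\bigoplus_i\Omega^n(\mf t^{Q_i})$ as $\bigoplus_{\mf d}\bigl(\bigoplus_{i\prec\mf d}\Omega^n(\mf t^{Q_i})\bigr)$, injectivity says exactly that $\sum_{\mf d}HH_n(\phi^*_{\mf d})(y_{\mf d})=0$ forces every $y_{\mf d}=0$; taking $y_{\mf d}=HH_n(\mc F_{\mf d})(\omega_{\mf d})$ we get that $\sum_{\mf d}\psi_{\mf d}(\omega_{\mf d})=0$ implies $HH_n(\mc F_{\mf d})(\omega_{\mf d})=0$ for all $\mf d$. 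Hence $M=\bigoplus_{\mf d}\mathrm{im}(\psi_{\mf d})$, and transporting through the bijection $\Psi$ gives an internal decomposition $HH_n(\mh H)=\bigoplus_{\mf d}HH_n(\mh H)^{\mf d}$ with $HH_n(\mh H)^{\mf d}:=\Psi^{-1}(\mathrm{im}\,\psi_{\mf d})$; with respect to it $HH_n(\mc F_1)=\bigoplus_{\mf d}HH_n(\mc F_{\mf d})$, since $HH_n(\mc F_{\mf d})$ is injective on $HH_n(\mh H)^{\mf d}$ (there it equals $\mathrm{pr}_{\mf d}\circ\Psi$ after the injective map $HH_n(\phi^*_{\mf d})$, with $\mathrm{pr}_{\mf d}\colon M\to\mathrm{im}\,\psi_{\mf d}$ the projection) and kills $HH_n(\mh H)^{\mf e}$ for $\mf e\neq\mf d$. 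Part~(a) is then immediate: $HH_n(\mc F_1)HH_n(\mh H)=\bigoplus_{\mf d}HH_n(\mc F_{\mf d})\bigl(HH_n(\mh H)^{\mf d}\bigr)=\bigoplus_{\mf d}HH_n(\mc F_{\mf d})HH_n(\mh H)$, the last equality because $HH_n(\mc F_{\mf d})$ annihilates the other summands; and the formal identity above identifies $HH_n(\mh H)^{\mf d}=\bigcap_{i\not\prec\mf d}\ker HH_n(\mc F_{Q_i,\sigma_i})$.

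For (b) I would exhibit $HH_n(\mh H)^{\mf d}$ intrinsically. By the Langlands classification for $\mh H$ (\cite{Eve}, in the generality of \cite[\S2.2]{SolAHA}) together with Theorem~\ref{thm:4.7}, the Grothendieck group $\C\otimes_\Z R(\mh H)$ decomposes canonically as $\bigoplus_{\mf e\in\Delta_{\mh H}}\C\otimes_\Z R(\mh H)^{\mf e}$ by the discrete-series support of the Langlands datum, and for $i\prec\mf e$ every $\pi(Q_i,\sigma_i,\lambda)$ lies in $\C\otimes_\Z R(\mh H)^{\mf e}$ (it is a subquotient of an induction of the $\delta$ with $\mf e=[Q,\delta]$). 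I claim that $HH_n(\mh H)^{\mf d}$ equals the intersection of $\ker HH_n(\mc F_{Q,\sigma})$ over all algebraic families $\mf F(Q,\sigma)$ in the sense of Definition~\ref{def:2} whose datum $\sigma$ is irreducible with discrete-series support different from $\mf d$ --- a description that involves no choices. One inclusion holds because the chosen families $\mf F(Q_i,\sigma_i)$ with $i\not\prec\mf d$ lie in this collection; for the reverse, any such $\mf F(Q,\sigma)$ lies fibrewise in some $\C\otimes_\Z R(\mh H)^{\mf e}$ with $\mf e\neq\mf d$, hence in the span of the chosen families with $i\prec\mf e$, and running the construction of Lemma~\ref{lem:2.7} with $\mf F(Q,\sigma)$ in place of $\nu_{g,v}$ expresses $\pi(Q,\sigma,\lambda)$ as a $\C$-linear combination, with constant coefficients and algebraic reparametrisations, of the $\pi(Q_i,\sigma_i,\cdot)$ with $i\prec\mf e$; Lemma~\ref{lem:2.12} then shows $HH_n(\mc F_{Q,\sigma})$ factors through $\bigoplus_{i\not\prec\mf d}HH_n(\mc F_{Q_i,\sigma_i})$ and so annihilates $HH_n(\mh H)^{\mf d}$.

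The step I expect to be the main obstacle is this last one: re-expressing an arbitrary algebraic family --- algebraically in its parameter and inside a single summand $\C\otimes_\Z R(\mh H)^{\mf e}$ --- through the fixed minimal spanning families. This requires knowing that the discrete-series-support decomposition of $\C\otimes_\Z R(\mh H)$ is compatible with algebraic families (the contributing terms being controlled, as in Lemma~\ref{lem:2.7}, by the central character, which varies algebraically and is finite-to-one over the base) and that the change-of-basis argument of Lemma~\ref{lem:2.7} survives this generalisation. By contrast, part~(a) and the existence of the decomposition in (b) are formal consequences of the injectivity in Lemma~\ref{lem:4.3} and the bijection in Theorem~\ref{thm:4.4}(b); Theorem~\ref{thm:4.7} is needed only to make the index set $\Delta_{\mh H}$, and hence the summands $HH_n(\mh H)^{\mf d}$, intrinsic.
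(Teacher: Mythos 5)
Your argument is correct and follows essentially the same route as the paper: part (a) rests on Lemma \ref{lem:4.3} (injectivity of $HH_n(\phi^*)$ and the containment of each $\mathrm{im}\,\psi_{\mf d}$ in the invariants) together with the characterization of $HH_n(\mc F_1)HH_n(\mh H)$ from Theorem \ref{thm:4.4}, and part (b) uses the same intrinsic characterization of $HH_n(\mh H)^{\mf d}$ as the common kernel of the $HH_n(\mc F_{Q,\sigma})$ attached to families supported on $\mf d'\neq\mf d$. The obstacle you flag at the end is resolved in the paper not by decomposing all of $\C\otimes_\Z R(\mh H)$ with constant coefficients, but by applying the decomposition of $R_t(\mh H)$ only for $\lambda\in\sqrt{-1}\mf t^Q_\R$ and then extending the resulting vanishing of the algebraic differential forms $HH_n(\mc F_{Q,\sigma})x$ to all of $\mf t^Q$ by Zariski density.
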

\begin{proof}
(a) By definition the left hand side is contained in the right hand side. From Theorem \ref{thm:4.4}
we know the conditions that describe the left hand side: upon applying $HH_n (\phi^*)$ one lands
in $\bigoplus_{g \in \langle W \Gamma \rangle} (\Omega^n (V^g) \otimes \natural^g 
)^{Z_{W \Gamma} (g)}$. Recall from Lemma \ref{lem:4.3} that those conditions arose from the virtual 
$\mh H$-representations $\nu^1_{g,\lambda}$.

With Lemma \ref{lem:4.6}.b we see, in the same way as in the proof of Lemma \ref{lem:4.3}, 
that $HH_n (\phi_{\mf d}^*)$ sends the image of $HH_n (\mc F_1)$, or equivalently the image of 
$HH_n (\mc F_{\mf d})$, to $\bigoplus_{g \in \langle W \Gamma \rangle} (\Omega^n (\mf t^g)
\otimes \natural^g )^{Z_W \Gamma (g)}$. Hence, for any $x \in HH_n (\mh H)$:
\begin{align*}
& HH_1 (\mc F_1) x = \bigoplus\nolimits_{\mf d \in \Delta_{\mh H}} 
HH_n \big( \mc F_{\mf d} \big) x, \\
& HH_n \big( \mc F_{\mf d} \big) x \in HH_n (\mc F_1) HH_n (\mh H) . 
\end{align*}
(b) This subspace is well-defined by the injectivity of $HH_n (\mc F_1)$ (Theorem \ref{thm:4.4}.a).
Consider an algebraic family $\mf F (Q,\sigma)$ whose tempered part
\[
\mf F^t (Q,\sigma) = \{ \pi (Q,\sigma,\lambda) : \lambda \in \sqrt{-1} \mf t_\R^Q \}
\]
lies in $R_t (\mh H)^{\mf d'}$, for some $\mf d' \in \Delta_{\mh H} \setminus \{\mf d\}$. If we
express $\pi (Q,\sigma,\lambda)$ with $\lambda \in \sqrt{-1}\mf t_\R^Q$ as in \eqref{eq:4.27}, all
the coefficients $c' (i,\lambda,v_i)$ with $i \not\prec \mf d'$ are zero. By construction
\[
HH_n (\mc F_{Q_j,\sigma_j}) HH_n (\mh H)^{\mf d} = 0 \quad \text{if } j \prec \mf d'.
\]
Hence $HH_n (\mc F_{Q,\sigma}) HH_n (\mh H)^{\mf d}$ consists of algebraic differential forms 
on $\mf t^Q$, which vanish on $\sqrt{-1} \mf t_\R^Q$. Since $\sqrt{-1} \mf t_\R^Q$ is Zariski-dense
in $\mf t^Q$, 
\[
HH_n (\mc F_{Q,\sigma}) HH_n (\mh H)^{\mf d} = 0 .
\]
On the other hand, by Theorem \ref{thm:4.4} $HH_n (\mc F_{\mf d}) = \bigoplus_{i \prec \mf d}
HH_n (\mc F_{Q_i,\sigma_i})$ is injective on $HH_n (\mh H )^{\mf d}$. Thus $HH_n (\mh H)^{\mf d}$
can be characterized as
\begin{multline*}
\{ x \in HH_n (\mh H) : HH_n (\mc F_{Q,\sigma}) x = 0 \text{ for all algebraic families } 
\mf F (Q,\sigma) \text{ with } \\ \mf F^t (Q,\sigma) \text{ in } R_t (\mh H)^{\mf d'} 
\text{ for some } \mf d' \neq \mf d \} . \qedhere
\end{multline*}
\end{proof}

From Lemma \ref{lem:4.8} and Theorem \ref{thm:4.4} we conclude:

\begin{cor}\label{cor:4.12}
There is a canonical decomposition of $\mc O (\mf t)^{W \Gamma}$-modules
\[
HH_n (\mh H) = \bigoplus\nolimits_{\mf d \in \Delta_{\mh H}} HH_n (\mh H)^{\mf d}.
\]
The injection
\[
HH_n (\phi_{\mf d}^*) \circ HH_n (\mc F_{\mf d}) : HH_n (\mh H)^{\mf d} \to \bigoplus\nolimits_{g 
\in \langle W \Gamma \rangle} (\Omega^n (\mf t^g) \otimes \natural^g )^{Z_W \Gamma (g)}
\]
is $\mc O (\mf t)^{W \Gamma}$-linear if we let $\mc O (\mf t)^{W \Gamma}$ act at
$(g,v)$ via evaluation at the central character $W \Gamma (cc (\delta) + v)$ of $\nu_{g,v}^{1,\mf d}$.
\end{cor}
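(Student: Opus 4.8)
The plan is to deduce all three assertions by assembling Lemma \ref{lem:4.8}, Theorem \ref{thm:4.4}, Lemma \ref{lem:4.3}, Lemma \ref{lem:4.5} and Lemma \ref{lem:4.9}, working throughout with $\epsilon = 1$ (so that $\mh H = \mh H (\mf t, W\Gamma, k, \natural)$ is the algebra $\mh H (\mf t, W\Gamma, \epsilon k, \natural)$ for $\epsilon = 1$). First I would apply $HH_n (\mc F_1)^{-1}$, which is legitimate by the injectivity in Theorem \ref{thm:4.4}.a, to the decomposition
\[
HH_n (\mc F_1) HH_n (\mh H) = \bigoplus\nolimits_{\mf d \in \Delta_{\mh H}} HH_n (\mc F_{\mf d}) HH_n (\mh H)
\]
of Lemma \ref{lem:4.8}.a, obtaining $HH_n (\mh H) = \bigoplus_{\mf d} HH_n (\mh H)^{\mf d}$ with the summands $HH_n (\mh H)^{\mf d}$ of Lemma \ref{lem:4.8}.b. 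That this is a decomposition of $\mc O (\mf t)^{W\Gamma}$-modules follows because $HH_n (\mc F_{\mf d}) = \bigoplus_{i \prec \mf d} HH_n (\mc F_{Q_i,\sigma_i})$ is $\mc O (\mf t)^{W\Gamma}$-linear by Lemma \ref{lem:4.5}, so each $HH_n (\mc F_{\mf d}) HH_n (\mh H)$ is an $\mc O (\mf t)^{W\Gamma}$-submodule of $\bigoplus_{i \prec \mf d} \Omega^n (\mf t^{Q_i})$, and $HH_n (\mc F_1)$ is $\mc O (\mf t)^{W\Gamma}$-linear and injective, so each preimage $HH_n (\mh H)^{\mf d}$ is an $\mc O (\mf t)^{W\Gamma}$-submodule; the decomposition is canonical by Lemma \ref{lem:4.8}.b.

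Next I would record that on $HH_n (\mh H)^{\mf d}$ the map $HH_n (\mc F_1)$ lands in the block $\bigoplus_{i \prec \mf d} \Omega^n (\mf t^{Q_i})$ (by the very definition of $HH_n (\mh H)^{\mf d}$ as the preimage of $HH_n (\mc F_{\mf d}) HH_n (\mh H)$), hence coincides there with $HH_n (\mc F_{\mf d})$ and, being a restriction of the injective map $HH_n (\mc F_1)$, is injective on $HH_n (\mh H)^{\mf d}$. Since $HH_n (\phi^*) = \bigoplus_{\mf d} HH_n (\phi^*_{\mf d})$ is injective by Lemma \ref{lem:4.3}, each $HH_n (\phi^*_{\mf d})$ is injective, so the composite $HH_n (\phi^*_{\mf d}) \circ HH_n (\mc F_{\mf d})$ is injective on $HH_n (\mh H)^{\mf d}$; and its image is contained in $\bigoplus_{g \in \langle W\Gamma \rangle} (\Omega^n (\mf t^g) \otimes \natural^g)^{Z_{W\Gamma}(g)}$ directly by Lemma \ref{lem:4.3}. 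This gives the stated injection.

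Finally, for $\mc O (\mf t)^{W\Gamma}$-linearity in the asserted module structure I would compose two facts already available for $\epsilon = 1$: $HH_n (\mc F_{\mf d})$ is $\mc O (\mf t)^{W\Gamma}$-linear for the module structure on each $\Omega^n (\mf t^{Q_i})$ coming from the central characters of the $\pi (Q_i, \zeta_1 (\sigma_i), \lambda_i)$ (Lemma \ref{lem:4.5}), and $HH_n (\phi^*_{\mf d})$ is $\mc O (\mf t)^{W\Gamma}$-linear when $\mc O (\mf t)^{W\Gamma}$ acts on $\Omega^n (\mf t^g)$ through the central character of $\nu_{g,v}^{1,\mf d}$ (the discussion after \eqref{eq:4.10}, resting on Lemma \ref{lem:4.9}), which by Lemma \ref{lem:4.9} is $W\Gamma (cc (\delta) + v)$ --- precisely the module structure in the statement. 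Composing the two yields the claim. The one place that needs care --- the main obstacle --- is keeping the several $\mc O (\mf t)^{W\Gamma}$-module structures straight and checking that the structures on the domain and codomain of each $\phi^*_{g,i}$ are compatible: since $\phi_{g,i}$ is given by an element of $(W\Gamma)_{\mf d}$ it preserves $\mc O (\mf t)^{W\Gamma}$-characters, so the central character of $\pi (Q_i, \zeta_1 (\sigma_i), \phi_{g,i}(v))$ agrees with that of $\nu_{g,v}^{1,\mf d}$, which is exactly what glues the two module structures together.
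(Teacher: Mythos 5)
Your proof is correct and takes essentially the same route as the paper, which derives Corollary \ref{cor:4.12} directly from Lemma \ref{lem:4.8} and Theorem \ref{thm:4.4}, with Lemma \ref{lem:4.3}, Lemma \ref{lem:4.5} and Lemma \ref{lem:4.9} (via the discussion after \eqref{eq:4.10}) supplying the injectivity and the $\mc O (\mf t)^{W \Gamma}$-module structures. Your explicit check that $\phi_{g,i} \in (W\Gamma)_{\mf d}$ makes the central character of $\pi (Q_i, \zeta_1 (\sigma_i), \phi_{g,i}(v))$ agree with that of $\nu_{g,v}^{1,\mf d}$, so that the two module structures glue, is precisely the point the paper relies on in Lemma \ref{lem:4.9}.
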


\begin{ex}
Consider the graded Hecke algebra $\mh H$ with $\mf t = \C^2$, $\Phi$ of type $A_2$ with
basis $\Delta = \{ \alpha = (1,0), \beta = (-1/2, \sqrt{3}/2) \}$ and parameters
$k_\alpha = k_\beta = k \in \R_{>0}$. The group $\Gamma$ and the 2-cocyle $\natural$ are trivial.
We have $W \cong S_3$, $\mf t_\alpha = \C \times \{0\}$, $\mf t^\alpha = \{ 0 \} \times \C$,
$\mf t^\emptyset = \mf t$ and $\mf t^\Delta = \{0\}$.
For each $\epsilon \in \C$ we need three algebraic families of 
$\mh H (\mf t, W, \epsilon k)$-representations, namely
\begin{itemize}
\item $\mf F (\emptyset, \mr{triv}) = \{ \pi (\emptyset, \mr{triv},\lambda) = 
\mr{ind}_{\mc O (\mf t)}^{\mh H (\mf t, W, \epsilon k)} (\C_\lambda) : \lambda \in \mf t \}$,
\item $\mf F (\{\alpha\},\mr{St}_\alpha)$, where the Steinberg representation of $\mh H_\alpha$
is defined by\\ $\mr{St}_\alpha |_{\C [W_\alpha]} = \mr{sgn}_{W_\alpha}$ and
$\mr{St}_\alpha |_{\mc O (\mf t_\alpha)} = \C_{-k}$,
\item $\mf F (\Delta, \mr{St})$, where the Steinberg representation of $\mh H$ is defined by\\
$\mr{St} |_{\C[W]} = \mr{sgn}_W$ and $\mr{St} |_{\mc O (\mf t)} = \C_{(-k,-\sqrt{3}k)}$.
\end{itemize}
Let us identify the virtual representations $\nu_{g,\lambda}^\epsilon$.
All maps $\phi_{g,i}$ are the identity, and the scalars $\lambda_{g,i}$ can be determined from
direct calculations in the algebra $\mc O (\mf t) \rtimes W = \mh H (\mf t,W,0)$. The latter
reduces further to a calculation in $\C[W]$ because $\mc O (\mf t)$ acts as evaluation at 0
on all the relevant representations.
\begin{itemize}
\item $\nu_{\mr{id},\lambda}^\epsilon = \mr{tr} \, \pi (\emptyset, \mr{triv},\lambda )$,
\item $\nu_{s_\alpha ,\lambda}^\epsilon = -\mr{tr} \, \pi (\{\alpha\},\mr{St}_\alpha ,0) + 
\mr{tr} \, \pi (\emptyset,\mr{triv},0) / 2$, because 
\[
\mr{tr} \, \mr{ind}_{W_\alpha}^W (\mr{sgn}_{W_\alpha}) + 
\mr{tr} \, \mr{ind}_{\{\mr{id}\}}^W (\mr{triv}) / 2
\]
is the trace function on $W$ associated to the conjugacy class of $s_\alpha$,
\item $\nu^\epsilon_{s_\alpha s_\beta,0} = \mr{tr} \, \mr{St} - \mr{tr} \, 
\pi (\{\alpha\},\mr{St}_\alpha ,0) + \mr{tr} \, \pi (\emptyset, \mr{triv},0) / 3$, because
\[
\mr{tr} \, \mr{sgn}_W - \mr{tr} \, \mr{ind}_{W_\alpha}^W (\mr{sgn}_{W_\alpha}) + 
\mr{tr} \, \mr{ind}_{\{\mr{id}\}}^W (\mr{triv}) / 3
\]
is the trace function on $W$ associated to the conjugacy class of $s_\alpha s_\beta$.
\end{itemize}
When $\epsilon = 0$ or $g = \mr{id}$, $\nu^\epsilon_{g,\lambda}$ has central character
$W \lambda$. In all other cases $\nu^\epsilon_{g,\lambda}$ does not admit a central character.

In this example $Z_W (s_\alpha) = \{\mr{id},s_\alpha\}$ acts trivially on $\mf t^\alpha$, and
$Z_W (s_\alpha s_\beta)$ acts trivially on $\mf t^\Delta = \{0\}$. Hence the components of 
$HH_n (\phi^*)$ indexed by $s_\alpha$ and $s_\alpha s_\beta$ do not impose any further restriction
on the image of $HH_n (\mc F_\epsilon)$. The component of $HH_n (\phi^*)$ indexed by id must have
image invariant under $W$, and by the expression for $\nu_{\mr{id},\lambda}^\epsilon$ that only
puts a condition on the image of $HH_n (\mc F_{\emptyset,\mr{triv}})$.
Thus Theorem \ref{thm:4.4} provides a bijection 
\begin{equation}\label{eq:4.15}
HH_n (\mc F_\epsilon) : HH_n (\mh H (\mf t, W, \epsilon k)) \to 
\Omega^n (\mf t)^W \oplus \Omega^n (\mf t^\alpha) \oplus \Omega^n (\{0\}) .
\end{equation}
The $\mc O (\mf t)^W$-module structure on the right hand is standard on $\Omega^n (\mf t)^W$, via 
evaluations at $(-\epsilon k,0) + \mf t^\alpha$ on $\Omega^n (\mf t^\alpha)$ and as evaluation
at $(-\epsilon k, -\epsilon \sqrt{3}k)$ on $\Omega^n (\{0\})$.  

The decomposition of $R_t (\mh H)$ from Theorem \ref{thm:4.7} has three direct summands, indexed
precisely by above three families. Here $R_t (\mh H)^{[P,\delta]}$ is spanned by
\[
\{ \pi (P,\delta,\lambda) : \lambda \in \sqrt{-1}\mf t_\R^P \} .
\]
For $\epsilon \in \R_{>0}$, \eqref{eq:4.15} is also the decomposition of 
$HH_n (\mh H (\mf t, W, \epsilon k))$ from Lemma \ref{lem:4.8}.a.\\
\end{ex}

\subsection{A Morita equivalent algebra} \ 
\label{par:Mor}

With applications to $p$-adic groups in mind we also consider some algebras that are Morita
equivalent to twisted graded Hecke algebras. Suppose that $W\Gamma$ is a subgroup of some finite
group $G$ and that $\natural$ extends to a 2-cocycle of $G$ (still denoted $\natural$). Then
$\C [W\Gamma ,\natural]$ is a subalgebra of $\C [G,\natural]$ and $G$ acts on the space
\[
V := G \times_{W\Gamma} \mf t
\]
by left multiplication. We fix a set of representatives $[G / W\Gamma] \subset G$ for $G / W\Gamma$,
with $1 \in [G / W \Gamma]$. Consider a $g \in [G / W\Gamma]$. In $g \mf t$ we have the root system 
$g (\Phi)$ with Weyl group $g W g^{-1} \subset G$. We define twisted graded Hecke algebra
\[
\mh H_g = \mh H (g \mf t, g W \Gamma g^{-1}, k^g, \natural ),
\]
where $k^g (g \alpha) = k (\alpha)$. By construction there is an algebra isomorphism
\[
\begin{array}{cccc}
\mr{Ad}(T_g) : & \mh H & \to & \mh H_g \\
& f T_w & \mapsto & (f \circ g^{-1}) T_g T_w T_g^{-1}
\end{array} \qquad f \in \mc O (\mf t), w \in W\Gamma .
\]
Next we define $\mh H (V,G,k,\natural)$ as the vector space
$\mc O (V) \otimes_\C \C [G,\natural]$ with the multiplication rules:
\begin{itemize}
\item $\mc O (V)$ and $\C [G,\natural]$ are embedded as unital subalgebras,
\item for each $g \in [G / W\Gamma]$, $\mh H_g$ is embedded as a subalgebra with underlying vector
space $\mc O (g \mf t) \otimes \C [g W \Gamma g^{-1},\natural]$,
\item if $g,\tilde g \in [G / W\Gamma]$ and $g \neq \tilde g$, then 
$h \tilde h = 0$ for all $h \in \mh H_g, \tilde h \in \mh H_{\tilde g}$,
\item $T_{\tilde g} T_g^{-1} h T_g T_{\tilde g}^{-1} = \mr{Ad}(T_{\tilde g}) \mr{Ad}(T_g)^{-1} h$
for $g,\tilde g \in [G / W\Gamma], h \in \mh H_g$.
\end{itemize}
It is easily checked that this yields an associative algebra, which in the case $k=0$ reduces
to $\mc O (V) \rtimes \C [G,\natural]$. The algebras $\mh H (V,G,k,\natural)$ and $\mh H$ are
Morita equivalent via the bimodules $1_{\mf t} \mh H (V,G,k,\natural)$ and 
$\mh H (V,G,k,\natural) 1_{\mf t}$. In particular 
\begin{equation}\label{eq:4.17}
\text{the inclusion } \mh H \to \mh H(V,G,k,\natural) \text{ is a Morita equivalence}
\end{equation}
and induces an isomorphism on Hochschild homology. We want to express \\
$HH_n (\mh H (V,G,k,\natural))$ so that all the subalgebras $\mh H_g$ participate on equal terms. 

The family of $\mh H$-representations $\mf F (Q_i,\sigma_i )$ gives rise to a family of 
$\mh H(V,G,k,\natural)$-representations (still denoted $\mf F (Q_i,\sigma_i)$) by applying 
\eqref{eq:4.17}, or equivalently by inducing from $\mh H$ to $\mh H(V,G,k,\natural)$. The
natural isomorphism
\begin{equation}\label{eq:4.18}
\mr{ind}_{\mh H}^{\mh H(V,G,k,\natural)} \pi (Q_i,\sigma_i,\lambda_i) \cong
\mr{ind}_{\mh H_g}^{\mh H(V,G,k,\natural)} \pi (g(Q_i),\mr{Ad}(T_g) \cdot \sigma_i,g(\lambda_i)) 
\end{equation}
shows that $\mf F (Q_i,\sigma_i)$ can also be obtained from the family of $\mh H_g$-representations
$\mf F (g(Q_i), \mr{Ad}(T_g) \cdot \sigma_i)$.
The $\mc O (\mf t)^{W\Gamma}$-algebra homomorphism $\mc F_{Q_i,\sigma_i}$ extends naturally to
$\mh H (G,V,k,\natural)$ with the same formulas, only now for the representations \eqref{eq:4.18}.
Similarly $HH_n (\mc F_1)$ extends naturally to
\[
HH_n (\mc F_1 ) : HH_n (\mh H (V,G,k,\natural)) \to 
\bigoplus\nolimits_{i=1}^{n_{\mf F}} \Omega^n (\mf t^{Q_i}).
\]
From the canonical isomorphism
\begin{equation}\label{eq:4.20}
\Omega^n (g^{-1}) : \bigoplus\nolimits_{i=1}^{n_{\mf F}} \Omega^n (\mf t^{Q_i}) \to
\bigoplus\nolimits_{i=1}^{n_{\mf F}} \Omega^n ((g\mf t)^{g (Q_i)}) 
\end{equation}
and \eqref{eq:4.18} we see that $HH_n (\mc F_1)$ arises by performing the analogous constructions
to the family of $\mh H_g$-representations $\mf F (g(Q_i), \mr{Ad}(T_g) \cdot \sigma_i)$.

For $g \in [G / W\Gamma], w \in W \Gamma, 1 \leq i \leq n_{\mf F}, v \in g(\mf t^w)$ we define 
\begin{align*}
& \phi_{g,w,i} = \phi_{w,i} \circ g^{-1} : g (\mf t^w ) \to \mf t^{Q_i} ,\\
& \lambda_{g,w,i} = \natural^w (g) \lambda_{w,i} ,\\
& \nu_{g,w,v} = \sum\nolimits_{i=1}^{n_{\mf F}} \lambda_{g,w,i} 
\mr{ind}_{\mh H}^{\mh H (V,G,k,\natural)} \pi (Q_i,\sigma_i,\phi_{g,w,i}(v)) .
\end{align*}
We let $G$ act on $[G / W\Gamma] \times W\Gamma$ by
\[
\tilde g (g,w) = (h, \tilde w w \tilde{w}^{-1}) \quad \text{if} \quad
\tilde g g = h \tilde w \text{ with } h \in [G / \Gamma] , \tilde w \in W \Gamma .
\]

\begin{lem}\label{lem:4.14}
For $h \in G, g \in [G / W\Gamma], w \in W \Gamma$ and $v \in g (\mf t^w)$:
\[
\nu_{h(g,w),hv} = \natural^{g w g^{-1}}(h) \nu_{g,w,v} .
\]
\end{lem}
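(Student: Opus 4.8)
The plan is to reduce the claimed identity, term by term in the sum defining $\nu_{g,w,v}$, to a scalar identity among the characters $\natural^{\bullet}(\cdot)$, and then to extract that scalar identity from Lemma \ref{lem:2.10}(a). First I would set up the bookkeeping: writing $hg = g'w'$ with $g'\in[G/W\Gamma]$ and $w'\in W\Gamma$, we have $h = g'w'g^{-1}$ and, by the definition of the $G$-action on $[G/W\Gamma]\times W\Gamma$, $h(g,w) = (g',w'')$ with $w'' := w'w(w')^{-1}$. One checks immediately that $g^{-1}v\in\mf t^w$, that $(g')^{-1}hv = w'g^{-1}v\in\mf t^{w''}$, and that $hv\in g'(\mf t^{w''})$, so that $\nu_{h(g,w),hv}$ is defined and both sides of the asserted equation lie in $\C\otimes_\Z R(\mh H(V,G,k,\natural))$.

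Next I would unwind the definition of $\nu_{\bullet,\bullet,\bullet}$ on each side. On the right-hand side, using $\lambda_{g,w,i} = \natural^w(g)\lambda_{w,i}$ and $\phi_{g,w,i} = \phi_{w,i}\circ g^{-1}$, the $i$-th summand is $\natural^{gwg^{-1}}(h)\,\natural^w(g)\,\lambda_{w,i}$ times $\mr{ind}_{\mh H}^{\mh H(V,G,k,\natural)}\pi(Q_i,\sigma_i,\phi_{w,i}(g^{-1}v))$. For the left-hand side I would unwind $\nu_{g',w'',hv}$ and invoke the $W\Gamma$-conjugation conventions $\lambda_{w'w(w')^{-1},i} = \natural^w(w')\lambda_{w,i}$ and $\phi_{w'w(w')^{-1},i} = \phi_{w,i}\circ(w')^{-1}$; the evaluation point then simplifies as $\phi_{w'',i}((g')^{-1}hv) = \phi_{w,i}((w')^{-1}w'g^{-1}v) = \phi_{w,i}(g^{-1}v)$, so the $i$-th summand becomes $\natural^{w''}(g')\,\natural^w(w')\,\lambda_{w,i}$ times the very same induced representation. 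Comparing term by term, the lemma reduces to the scalar identity
\[
\natural^{w'w(w')^{-1}}(g')\,\natural^w(w') \;=\; \natural^{gwg^{-1}}(h)\,\natural^w(g).
\]

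Finally I would prove this scalar identity by showing that both sides equal $\natural^w(hg)$: applying Lemma \ref{lem:2.10}(a) with $(g,h,\tilde h)$ there taken to be $(w,w',g')$ gives $\natural^w(g'w') = \natural^{w'w(w')^{-1}}(g')\,\natural^w(w')$, while taking it to be $(w,g,h)$ gives $\natural^w(hg) = \natural^{gwg^{-1}}(h)\,\natural^w(g)$, and $g'w' = hg$ by construction. Apart from this, everything is routine bookkeeping once the factorization $hg = g'w'$ is chosen; the one point I would be slightly careful about is that the conjugation conventions for $\lambda_{w,i}$ and $\phi_{w,i}$ are meaningful for all $w\in W\Gamma$ and not merely for representatives in $\langle W\Gamma\rangle$, which I would justify by the same short application of Lemma \ref{lem:2.10}(a) that underlies the consistency of \eqref{eq:2.35}. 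The only genuinely structural observation is that $\natural^w(hg)$ can be split along the two factorizations $hg = h\cdot g$ and $hg = g'\cdot w'$, and that these two splittings are exactly the two sides of the required scalar identity.
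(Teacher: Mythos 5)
Your proof is correct and follows essentially the same route as the paper: both arguments come down to splitting $\natural^w(hg)$ along the two factorizations $hg = h\cdot g$ and $hg = g'\cdot w'$ via two applications of Lemma \ref{lem:2.10}.a. The only difference is packaging — the paper cites Lemma \ref{lem:4.6}.a (transferred through the Morita equivalence $\mr{ind}_{\mh H}^{\mh H(V,G,k,\natural)}$) for the $W\Gamma$-conjugation relation of $\nu_{1,w,v}$ and then handles general $g$, whereas you unwind everything to the coefficients $\lambda_{w,i}$ and maps $\phi_{w,i}$ and verify the same relation directly from the conventions of \eqref{eq:2.35}.
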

\begin{proof}
By definition $\nu_{g,w,v} = \natural^w (g) \nu_{1,w,g^{-1}v}$. As
\[
\nu_{1,w,v} = \mr{ind}_{\mh H}^{\mh H (V,G,k,\natural)} (\nu^1_{w,v})
\]
and $\mr{ind}_{\mh H}^{\mh H (V,G,k,\natural)}$ is a Morita equivalence, the $\nu_{1,w,v}$
satisfy the same relations as the $\nu_{w,v}^1$. In particular, by Lemma \ref{lem:4.6}.a
$\nu_{1,\gamma w \gamma^{-1},hv} = \natural^w (\gamma) \nu_{1,w,v}$ for
$\gamma \in W\Gamma$. From these properties we deduce, for $h = \tilde g \tilde w \in G$:
\begin{align*}
\nu_{h (1,w),hv} = \nu_{\tilde g, \tilde w w \tilde{w}^{-1},\tilde g \tilde w v} & = 
\natural^{\tilde w w \tilde{w}^{-1}} (\tilde g) \nu_{1,\tilde w w \tilde{w}^{-1},\tilde w v} \\
& = \natural^{\tilde w w \tilde{w}^{-1}} (\tilde g) \natural^w (\tilde w) 
\nu_{1,w,v} = \natural^w (h) \nu_{1,w,v} ,
\end{align*}
where the last step relies on Lemma \ref{lem:2.10}. It follows that
\[
\nu_{h (g,w),hv} = \nu_{hg(1,w),hv} = \natural^w (hg) \nu_{1,w,g^{-1}v} =
\natural^w (hg) \natural^w (g)^{-1} \nu_{g,w,v} =
\natural^{g w g^{-1}} (h) \nu_{g,w,v} ,
\]
where we used Lemma \ref{lem:2.10} again. 
\end{proof}

The version of \eqref{eq:2.36} and \eqref{eq:4.19} for $\mh H (V,G,k,\natural)$ is the map
$HH_n (\tilde \phi^*)$ defined as
\[
\bigoplus_{g \in [G / W\Gamma]} \bigoplus_{w \in W \Gamma} \sum_{i=1}^{n_{\mf F}}
\lambda_{g,w,i} HH_n (\phi_{g,w,i}^*) \;:\; \bigoplus_{i=1}^{n_{\mf F}} \Omega^n (\mf t^{Q_i}) \to 
\bigoplus_{g \in [G / W\Gamma]} \bigoplus_{w \in W \Gamma} \Omega^n (g \cdot \mf t^w) .
\]
By \eqref{eq:4.18} and \eqref{eq:4.20}, all the subalgebras $\mh H_g$ are involved in the same
way in $HH_n (\tilde \phi^*)$. The map $HH_n (\tilde \phi^*)$ is injective for the same reasons
as $HH_n (\phi^*)$, see Lemmas \ref{lem:2.9} and \ref{lem:4.3}.

\begin{prop}\label{prop:4.11}
\enuma{
\item The map
\[
HH_n (\mc F_1) : HH_n (\mh H(V,G,k,\natural)) \to 
\bigoplus\nolimits_{i=1}^{n_{\mf F}} \Omega^n (\mf t^{Q_i})
\]
is a $\mc O (V)^G$-linear injection. 
\item The $\C$-linear map 
\[
HH_n (\tilde \phi^*) : HH_n (\mc F_1) HH_n (\mh H(V,G,k,\natural)) \to
\Big( \bigoplus_{g \in [G / W\Gamma]} \bigoplus_{w \in W \Gamma} 
\Omega^n (g \cdot \mf t^w) \otimes \natural^{g w g^{-1}} \Big)^G 
\]
is bijective.
\item $HH_0 (\mc F_1) HH_0 (\mh H(V,G,k,\natural))$ equals the set of $\sum_{i=1}^{n_{\mf F}} \omega_i
\in \bigoplus\nolimits_{i=1}^{n_{\mf F}} \mc O (\mf t^{Q_i})$ for which the map
\[
\mr{ind}_{\mh H}^{\mh H (V,G,k,\natural)} \pi (Q_i,\sigma_i,\phi_{g,w,i}(v)) \;
\mapsto \; \omega_i (\lambda_i) \qquad i = 1,\ldots,n_{\mf F}, \lambda_i \in \mf t^{Q_i}
\]
descends to a linear function on $\C \otimes_\Z R (\mh H (V,G,k,\natural))$. This provides an
isomorphism of $\mc O (V)^G$-modules
\[
HH_0 (\mc F_1) HH_0 (\mh H(V,G,k,\natural)) \cong 
\big( \C \otimes_\Z R (\mh H(V,G,k,\natural)) \big)^*_{\mr{reg}} .
\]
}
\end{prop}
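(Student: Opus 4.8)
The plan is to deduce this proposition from Theorem \ref{thm:4.4} and Corollary \ref{cor:4.10} for the twisted graded Hecke algebra $\mh H = \mh H (\mf t, W\Gamma, k, \natural)$ (the case $\epsilon = 1$), transported along the Morita equivalence \eqref{eq:4.17} and then spread over all the subalgebras $\mh H_g$ by means of Lemma \ref{lem:4.14}.

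First I would record that \eqref{eq:4.17} induces an isomorphism $HH_n (\mh H) \xrightarrow{\ \sim\ } HH_n (\mh H (V,G,k,\natural))$ carrying $\mc O (\mf t)^{W\Gamma} \subset Z(\mh H)$ to $\mc O (V)^G \subset Z(\mh H (V,G,k,\natural))$ --- note $\mc O (V)^G \cong \mc O (\mf t)^{W\Gamma}$ since $V = G \times_{W\Gamma} \mf t$. Because the extended algebra maps $\mc F_{Q_i,\sigma_i}$ on $\mh H (V,G,k,\natural)$ are, by construction, obtained from those on $\mh H$ by $\mr{ind}_{\mh H}^{\mh H (V,G,k,\natural)}$, the map $HH_n (\mc F_1)$ on $HH_n (\mh H (V,G,k,\natural))$ equals the Morita isomorphism followed by $HH_n (\mc F_1)$ on $HH_n (\mh H)$; part (a) then follows from Theorem \ref{thm:4.4}.a, with $\mc O (V)^G$-linearity inherited from Lemma \ref{lem:4.5} via the identification \eqref{eq:4.18}.

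For part (b) I would first note that $HH_n (\mc F_1) HH_n (\mh H (V,G,k,\natural))$ is the same subspace of $\bigoplus_i \Omega^n (\mf t^{Q_i})$ as $HH_n (\mc F_1) HH_n (\mh H)$, which Theorem \ref{thm:4.4}.b describes as a preimage under $HH_n (\phi^*)$. Since $\lambda_{g,w,i} = \natural^w (g) \lambda_{w,i}$ and $\phi_{g,w,i} = \phi_{w,i} \circ g^{-1}$, the block of $HH_n (\tilde\phi^*)$ for $\mh H (V,G,k,\natural)$ indexed by $g = 1$ coincides with the map \eqref{eq:4.19} for $\mh H$. I would then analyse the $G$-action on $[G/W\Gamma] \times W\Gamma$: applying $g^{-1}$ sends $(g,w)$ to $(1,w)$, and the stabiliser of $(1,w)$ is $Z_{W\Gamma}(w)$, so every $G$-orbit meets $\{1\} \times W\Gamma$ in a single $W\Gamma$-conjugacy class. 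This gives a canonical isomorphism between $\big( \bigoplus_{g,w} \Omega^n (g \cdot \mf t^w) \otimes \natural^{gwg^{-1}} \big)^G$ and $\bigoplus_{w \in \langle W\Gamma \rangle} \big( \Omega^n (\mf t^w) \otimes \natural^w \big)^{Z_{W\Gamma}(w)}$, given by restricting to the $(1,w)$-components, under which $HH_n (\tilde\phi^*) \circ HH_n (\mc F_1)$ for $\mh H (V,G,k,\natural)$ becomes $HH_n (\phi^*) \circ HH_n (\mc F_1)$ for $\mh H$. By Lemma \ref{lem:4.14} the image lies in the $G$-invariants, by Theorem \ref{thm:4.4}.b the latter map is a bijection onto the claimed target, and injectivity of $HH_n (\tilde\phi^*)$ follows exactly as for $HH_n (\phi^*)$ (Lemmas \ref{lem:2.9} and \ref{lem:4.3}), which together with part (a) yields (b).

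For part (c) I would use that the Morita equivalence \eqref{eq:4.17} induces, via $\mr{ind}_{\mh H}^{\mh H (V,G,k,\natural)}$, an isomorphism $\C \otimes_\Z R(\mh H) \cong \C \otimes_\Z R(\mh H (V,G,k,\natural))$ compatible with the trace pairing \eqref{eq:2.42}, and that restriction along the idempotent $1_{\mf t}$ together with induction identifies algebraic families of representations on the two sides --- hence identifies the two notions of regular functional. Since the representations $\mr{ind}_{\mh H}^{\mh H (V,G,k,\natural)} \pi (Q_i,\sigma_i,\lambda_i)$ span $\C \otimes_\Z R(\mh H (V,G,k,\natural))$, the stated condition on $\omega$ expresses exactly that $\omega$ descends to an element of $\big( \C \otimes_\Z R(\mh H (V,G,k,\natural)) \big)^*$, and (c) then follows from Theorem \ref{thm:4.4}.c by transport of structure. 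The main obstacle I anticipate is making the compatibility in the second paragraph fully precise --- that the extension of $HH_n (\mc F_1)$ to $HH_n (\mh H (V,G,k,\natural))$ genuinely factors through the Morita isomorphism $HH_n (\mh H (V,G,k,\natural)) \cong HH_n (\mh H)$ --- which comes down to the identity $\mr{tr}\big( \mr{ind}_{\mh H}^{\mh H (V,G,k,\natural)} \pi \big)(1_{\mf t} x 1_{\mf t}) = \mr{tr}\, \pi (x)$ for $x \in \mh H$ and the functoriality of the generalized trace map; once that is in hand, the rest is bookkeeping with the $G$-action and the cited theorems.
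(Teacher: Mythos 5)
Your proposal is correct and follows essentially the same route as the paper: part (a) from Lemmas \ref{lem:4.5} and \ref{lem:4.2} via the Morita equivalence \eqref{eq:4.17}, part (b) by observing that the $g=1$ block of $HH_n(\tilde\phi^*)\circ HH_n(\mc F_1)$ is the map of Corollary \ref{cor:4.10} and that Lemma \ref{lem:4.14} forces $G$-invariance, so each element of the image is determined by its $g=1$ summands, and part (c) by transporting Theorem \ref{thm:4.4}.c along \eqref{eq:4.17}. Your extra care with the orbit structure of the $G$-action on $[G/W\Gamma]\times W\Gamma$ and the trace compatibility under $\mr{ind}_{\mh H}^{\mh H(V,G,k,\natural)}$ only makes explicit what the paper leaves implicit.
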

\begin{proof}
(a) Lemma \ref{lem:4.5} says that $HH_n (\mc F_1)$ is a homomorphism of modules over 
$\mc O (V)^G = \mc O (\mf t)^{W\Gamma}$. By Lemma \ref{lem:4.2} and \eqref{eq:4.17}, it is injective.\\
(b) From Corollary \ref{cor:4.10} we know that the projection of the image of \\
$HH_n (\tilde \phi^*) HH_n (\mc F_1)$ on the summands indexed by $g = 1$ is precisely
\begin{equation}\label{eq:4.22}
\big( \bigoplus\nolimits_{w \in W \Gamma} \Omega^n (\mf t^w) \otimes \natural^{w} \big)^{W \Gamma} .
\end{equation}
By Lemma \ref{lem:4.14} this image consists of $G$-invariant elements. Hence every element in the
image of $HH_n (\tilde \phi^*) HH_n (\mc F_1)$ is determined by its summands with $g = 1$. From
the natural isomorphism of \eqref{eq:4.22} with the asserted image (via removing the summands 
with $g \neq 1$) we see that $HH_n (\tilde \phi^*) HH_n (\mc F_1)$ indeed has that image.\\
(c) This follows from Theorem \ref{thm:4.4} and the Morita equivalence \eqref{eq:4.17}.
\end{proof}

We define a $\mh H (V,G,k,\natural)$-representation $\pi$ to be tempered if the 
$\mh H$-representation $1_{\mf t} \pi$ is tempered. The decomposition from Theorem \ref{thm:4.7}
also holds for the category of finite dimensional tempered $\mh H (V,G,k,\natural)$-representations,
by the Morita equivalence \eqref{eq:4.17}. Hence Lemmas \ref{lem:4.6}.b and \ref{lem:4.8} pertain
to $\mh H (V,G,k,\natural)$ as well. Consequently there is a canonical decomposition like in
Corollary \ref{cor:4.12}:
\begin{equation}\label{eq:4.23} 
\begin{array}{lll}
HH_n (\mh H (V,G,k,\natural))^{\mf d} & = & HH_n (\mc F_1)^{-1} HH_n (\mc F_{\mf d})
HH_n (\mh H (V,G,k,\natural))^{\mf d} , \\
HH_n (\mh H (V,G,k,\natural)) & = & 
\bigoplus\nolimits_{\mf d \in \Delta_{\mh H}} HH_n (\mh H (V,G,k,\natural))^{\mf d} .
\end{array}
\end{equation}
Furthermore $HH_n (\tilde \phi^*) \circ HH_n (\mc F_{\mf d})$ is $\mc O (V)^G$-linear on
$HH_n (\mh H (V,G,k,\natural))^{\mf d}$ if we endow the target with the module structure coming 
from the central characters of the virtual $\mh H (V,G,k,\natural)$-representations 
\begin{equation}\label{eq:4.26}
\nu_{g,w,v}^{\mf d} = \sum\nolimits_{i=1, i \prec \mf d}^{n_{\mf F}} \lambda_{g,w,i} \,
\mr{tr} \, \mr{ind}_{\mh H}^{\mh H (V,G,k,\natural)} \pi (Q_i,\sigma_i,\phi_{g,w,i}(v)) .
\end{equation}


\begin{thebibliography}{99}

%\bibitem[ABPS]{ABPS} A.-M. Aubert, P.F. Baum, R.J. Plymen, M. Solleveld,
%``On the local Langlands correspondence for non-tempered representations",
%M\"unster Journal of Mathematics {\bf 7} (2014), 27--50

\bibitem[AMS1]{AMS2} A.-M. Aubert, A. Moussaoui, M. Solleveld
``Graded Hecke algebras for disconnected reductive groups", pp. 23-84 in:
\emph{Geometric aspects of the trace formula, W. M\"uller, S. W. Shin, N. Templier (eds.)},
Simons Symposia, Springer, 2018

\bibitem[AMS2]{AMS3} A.-M. Aubert, A. Moussaoui, M. Solleveld,
``Affine Hecke algebras for Langlands parameters",
arXiv:1701.03593, 2019

\bibitem[BaMo]{BaMo} D. Barbasch, A. Moy,
``Unitary spherical spectrum for $p$-adic classical groups",
Acta Appl. Math. {\bf 44} (1996), 3--37

%\bibitem[BeDe]{BeDe} J. Bernstein, P. Deligne,	
%``Le "centre" de Bernstein", pp. 1--32 in:
%\emph{Repr\'esentations des groupes r\'eductifs sur un corps local}, 
%Travaux en cours, Hermann, Paris, 1984

\bibitem[BDK]{BDK} J. Bernstein, P. Deligne, D. Kazhdan,
``Trace Paley-Wiener theorem for reductive $p$-adic groups",
J. Analyse Math. {\bf 47} (1986), 180--192

%\bibitem[BrPl]{BrPl} J. Brodzki, R.J. Plymen,
%``Periodic cyclic homology of certain nuclear algebras",
%C.R. Acad. Sci. Paris {\bf 329} (1999), 671--676

\bibitem[Bry]{Bry} J.L. Brylinski,
``Cyclic homology and equivariant theories", 
Ann. Inst. Fourier {\bf 37.4} (1987), 15--28

\bibitem[CuRe]{CuRe} C.W. Curtis, I. Reiner,
\emph{Representation theory of finite groups and associative algebras},
Pure and Applied Mathematics {\bf 11},
John Wiley \& Sons, New York - London, 1962

\bibitem[DeOp]{DeOp} P. Delorme, E.M. Opdam,
``The Schwartz algebra of an affine Hecke algebra",
J. reine angew. Math. {\bf 625} (2008), 59--114

%\bibitem[Eis]{Eis} D. Eisenbud,		
%\emph{Commutative algebra},
%Graduate Texts in Mathematics {\bf 150}, Springer-Verlag, New York NJ, 1995

\bibitem[Eve]{Eve} S. Evens,
``The Langlands classification for graded Hecke algebras", 
Proc. Amer. Math. Soc. {\bf 124.4} (1996), 1285--1290

%\bibitem[Hei]{Hei} V. Heiermann,
%``Une formule de Plancherel pour l'alg\`ebre de Hecke d'un groupe r\'eductif p-adique",
%Comment. Math. Helv. {\bf 76} (2001) 388--415

%\bibitem[Hel]{Hel} S. Helgason,
%\emph{Differential geometry, Lie groups and symmetric spaces},
%Pure and Applied Mathematics {\bf 80}, Academic Press, New York NJ, 1978

\bibitem[Lod]{Lod} J.-L. Loday,
\emph{Cyclic homology 2nd ed.},
Mathematischen Wissenschaften {\bf 301}, Springer-Verlag, Berlin, 1997

\bibitem[Lus1]{Lus1} G. Lusztig,
``Cuspidal local systems and graded Hecke algebras",
Publ. Math. Inst. Hautes \'Etudes Sci. {\bf 67} (1988), 145--202

\bibitem[Lus2]{Lus2} G. Lusztig,
``Cuspidal local systems and graded Hecke algebras. II",
pp. 217--275 in: \emph{Representations of groups},
Canadian Mathematical Society Conference Proceedings {\bf 16}, 1995

%\bibitem[MeTo]{MeTo} J. Merrien, J.-C. Tougeron,
%``Ideaux de fonctions diff\'erentiables II",
%Annales de l'institute Fourier {\bf 20.1} (1970), 179--233

\bibitem[Mor]{Mor} L. Morris,
``Tamely ramified intertwining algebras",
Invent. Math. {\bf 114.1} (1993), 1--54

%\bibitem[Mey1]{Mey1} R. Meyer,
%``Homological algebra for Schwartz algebras of reductive $p$-adic groups",
%pp. 263--300 in: \emph{Noncommutative geometry and number theory}, 
%Aspects of Mathematics {\bf E37}, Vieweg Verlag, Wiesbaden, 2006

%\bibitem[Mey2]{Mey2} R. Meyer,
%\emph{Local and analytic cyclic homology},
%European Mathematical Society Publishing House, 2007

\bibitem[Nis]{Nis} V. Nistor,
``A non-commutative geometry approach to the representation theory of reductive $p$-adic 
groups: Homology of Hecke algebras, a survey and some new results",
pp. 301--323 in: \emph{Noncommutative geometry and number theory},
Aspects of Mathematics {\bf E37}, Vieweg Verlag, Wiesbaden, 2006

%\bibitem[Opd]{Opd} E. Opdam,
%``On the spectral decomposition of affine Hecke algebras",
%J. Inst. Math. Jussieu {\bf 3.4} (2004), 531--648

%\bibitem[OpSo1]{OpSo} E. Opdam, M. Solleveld,
%``Resolutions of tempered representations of reductive $p$-adic groups",
%J. Funct. Anal. {\bf 265} (2013), 108--134

%\bibitem[OpSo2]{OpSo2} E. Opdam, M. Solleveld,
%``Extensions of tempered representations",
%Geom. And Funct. Anal. {\bf 23} (2013), 664--714

%\bibitem[Poe]{Poe} V. Po\'enaru,
%\emph{Singularit\'es $C^\infty$ en pr\'esence de sym\'etrie}
%Lecture Notes in Mathematics {\bf 510}, Springer Verlag, Berlin, 1976

%\bibitem[Sil]{Sil} A.J. Silberger,
%\emph{Introduction to harmonic analysis on reductive p-adic groups},
%Mathematical Notes {\bf 23},
%Princeton University Press, Princeton NJ, 1979

\bibitem[Slo]{Slo} K. Slooten,
``Generalized Springer correspondence and Green functions
for type B/C graded Hecke algebras",
Advances in Math. {\bf 203.1} (2006), 34--108

\bibitem[Sol1]{SolGHA} M. Solleveld,
``Parabolically induced representations of graded Hecke algebras",
Algebras and Representation Theory {\bf 15.2} (2012), 233--271

\bibitem[Sol2]{SolHomGHA} M. Solleveld,
``Homology of graded Hecke algebras",
J. Algebra {\bf 323} (2010), 1622--1648

\bibitem[Sol3]{SolAHA} M. Solleveld,
``On the classification of irreducible representations of
affine Hecke algebras with unequal parameters",
Representation Theory {\bf 16} (2012), 1--87

\bibitem[Sol4]{SolHomAHA} M. Solleveld,
``Hochschild homology of affine Hecke algebras",
J. Algebra {\bf 384} (2013), 1--35

%\bibitem[Sol5]{SolComp} M. Solleveld,
%``On completions of Hecke algebras",
%pp. 207--262 in: \emph{Representations of Reductive p-adic Groups, 
%A.-M. Aubert, M. Mishra, A. Roche, S. Spallone (eds.)},
%Progress in Mathematics {\bf 328}, Birkh\"auser, 2019

\bibitem[Sol5]{SolEnd} M. Solleveld,
``Endomorphism algebras and Hecke algebras for reductive $p$-adic groups",
arXiv:2005.07899, 2020

\bibitem[Sol6]{SolHecke} M. Solleveld,
``Affine Hecke algebras and their representations",
Indagationes Mathematica {\bf 32.5} (2021), 1005--1082

%\bibitem[Tou]{Tou} J.C. Tougeron,
%\emph{Id\'eaux de fonctions diff\'erentiables},
%Ergebnisse der Mathematik und ihrer Grenzgebiete {\bf 71},
%Springer-Verlag, Berlin, 1972

\bibitem[Wal]{Wal} J.-L. Waldspurger,
``La formule de Plancherel pour les groupes $p$-adiques (d'apr\`es Harish-Chandra)",
J. Inst. Math. Jussieu {\bf 2.2} (2003), 235--333

\end{thebibliography}
\end{document}